\documentclass[reqno,12pt]{amsart}
\usepackage{amsmath, latexsym, amsfonts, amssymb, amsthm, amscd}
\usepackage{mathrsfs,enumerate}

\setlength{\oddsidemargin}{5mm}
\setlength{\evensidemargin}{5mm}
\setlength{\textwidth}{150mm}
\setlength{\headheight}{0mm}
\setlength{\headsep}{12mm}
\setlength{\topmargin}{0mm}
\setlength{\textheight}{220mm}
\setcounter{secnumdepth}{2}


\numberwithin{equation}{section}

\newtheorem{theorem}{Theorem}[section]
\newtheorem{lemma}[theorem]{Lemma}
\newtheorem{prop}[theorem]{Proposition}

\newtheorem{rem}[theorem]{Remark}
\newtheorem{definition}[theorem]{Definition}

\newcommand{\ga}{\alpha}
\newcommand{\gb}{\beta}

\newcommand{\gep}{\varepsilon}       

\newcommand{\cB}{{\ensuremath{\mathcal B}} }
\newcommand{\cF}{{\ensuremath{\mathcal F}} }
\newcommand{\cP}{{\ensuremath{\mathcal P}} }
\newcommand{\cE}{{\ensuremath{\mathcal E}} }

\newcommand{\cL}{{\ensuremath{\mathcal L}} }

\newcommand{\cS}{{\ensuremath{\mathcal S}} }

\newcommand{\cX}{{\ensuremath{\mathcal X}} }
\newcommand{\cY}{{\ensuremath{\mathcal Y}} }

\newcommand{\bbE}{{\ensuremath{\mathbb E}} }
\newcommand{\E}{{\ensuremath{\mathbb E}} }

\newcommand{\bbN}{{\ensuremath{\mathbb N}} }
\newcommand{\N}{{\ensuremath{\mathbb N}} }

\newcommand{\bbP}{{\ensuremath{\mathbb P}} }

\newcommand{\bbR}{{\ensuremath{\mathbb R}} }
\newcommand{\R}{{\ensuremath{\mathbb R}} }
\newcommand{\bbS}{{\ensuremath{\mathbb S}} }

\newfont{\indic}{bbmss12}

\def\un#1{\hbox{{\indic 1}$_{#1}$}}

\begin{document}

\title{A random string with reflection in a convex domain}

\author{Said Karim Bounebache}
\address{Laboratoire de Probabilit{\'e}s et Mod\`eles Al\'eatoires (CNRS U.M.R. 7599) \\ Universit{\'e} Paris 6
-- Pierre et Marie Curie, U.F.R. Math\'ematiques, Case 188, 4 place
Jussieu, 75252 Paris cedex 05, France }
\subjclass[2000]{Primary: Primary 60H07; 60H15; 60J55; 
Secondary 31C25}

\keywords{Integration by parts; Stochastic partial
differential equations with reflection; Additive functionals;
Dirichlet Forms.}

\maketitle

\begin{abstract}
We study the motion of a random string in a convex domain $O$ in $\R^d$,
namely the solution of a vector-valued stochastic heat equation, confined
in the closure of $O$ and reflected at the boundary of $O$. We study the structure of
the reflection measure by computing its Revuz measure in terms of
an infinite-dimensional integration by parts formula. Our method exploits
recent results on weak convergence of Markov processes with log-concave
invariant measures.
\end{abstract}

\section{Introduction}
In this paper we want to prove well-posedness of stochastic partial differential equations
driven by space-white noise and reflected on the boundary of a convex region of $\R^d$.
More precisely, we consider a convex open domain $O$ in $\R^d$ with a smooth boundary $\partial O$ and a proper l.s.c. convex function $\varphi:\overline O\mapsto\R$,
and we study solutions $(u,\eta)$ of the equation
\begin{equation}\label{1}
\left\{ \begin{array}{ll}
{\displaystyle
\frac{\partial u}{\partial t}=\frac 12
\frac{\partial^2 u}{\partial \theta^2} + n(u(t,\theta)) \cdot \eta(t,\theta) - \frac12\partial\varphi_0(u(t,\theta))
 + \dot W(t,\theta) }
\\ \\
u(0,\theta)=x(\theta), \quad u(t,0)=a, \ u(t,1)=b
\\ \\ {\displaystyle
u(t,\theta)\in\overline O, \ \eta\geq 0, \
\eta(\{(t,\theta) \,|\, u(t,\theta)\notin \partial{O}\})=0 }
\end{array} \right.
\end{equation}
where $u\in C\left([0,T]\times[0,1];\overline O\right)$ and $\eta$ is a
locally finite positive measure on $]0,T]\times[0,1]$; moreover
$a,b\in O$ are some fixed points, $\dot W$ is a vector of $d$ independent
copies of a space-time white noise and for all $y\in \partial O$ we denote by $n(y)$ the inner normal vector at $y$ to the boundary
$\partial O$; finally, $\partial\varphi_0:O\mapsto\R^d$ is the element
of minimal norm in the subdifferential of $\phi$ and
the initial condition $x:[0,1]\mapsto\overline O$ is continuous.

Solutions $u(t,\theta)$ of
equation \eqref{1} take values in the convex closed set $\overline O$ and
evolve as solutions of a standard SPDE in the interior $O$, while the
reflection measure $\eta$ pushes $u(t,\theta)$ along the inner normal vector
$n(u(t,\theta))$, whenever $u(t,\theta)$ hits the boundary.
The condition $\eta(\{(t,\theta) \,|\, u(t,\theta)\notin \partial{O}\})=0$
means that the reflection term acts only when it is necessary, i.e.
only when $u(t,\theta)\in \partial{O}$.

This kind of equations has been considered, in the case of $O$ being
an interval in $\R$, in a number of papers, like \cite{nupa, dopa1,
fuol,za,dmz,deza,debgoud,goud}, as a natural extension
of the classical theory of stochastic differential inclusions in finite dimension
to an infinite-dimensional setting. Moreover, such equations arise
naturally as scaling limit of discrete interface models, see e.g. \cite{fuol}.
However, the finite dimensional situation is very well understood, see
\cite{cepa}, while in infinite
dimension only particular cases can be treated, often with {\it ad hoc} 
arguments.

All previous papers on SPDEs with reflection deal with versions of \eqref{1} where $u$ takes real values, with one or two barriers (one above, one below the solution). This article seems to be the first to tackle the problem of a random string $u$ confined in a convex region in $\R^d$.
This case is not a trivial generalization of the one-dimensional one.
Indeed, in one dimension the reflection term in \eqref{1} has a definite
sign if there is only one barrier, and is the difference of two positive
terms acting on disjoint supports, if there are two barriers. This makes
it easy to obtain estimates on the total variation of the reflection term.
This structure is lost in the
case of a convex region in $\R^d$, since the positive measure $\eta$ is multiplied
by the normal vector $n$ at the boundary, which moves in the $(d-1)$-dimensional
sphere $\bbS^{d-1}$. See the beginning of section \ref{exweso} below for a 
more precise discussion.

In the same spirit, we recall that most of the first papers on this
topic make essential use of monotonicity properties of equation 
\eqref{1}, related with {\it the maximum principle} satisfied by
the second derivative and with the existence of a unique barrier.
However more recent works have shown that monotonicity properties
are not so essential: for instance a fourth-order operator, without
maximum principle, replaces the second derivative in \cite{deza,debgoud,goud},
and two barriers in $\R$ are considered in \cite{eddouk,otobe,debgoud}.

This paper makes use of an approach based on Dirichlet forms, infinite
dimensional integration
by parts formulae, and, crucially, a recent result on stability of
Fokker-Planck equations associated with log-concave reference measures, see 
Theorem \ref{stability} below. This stability result, developed in
\cite{asz} using recent advances in the theory of optimal transport, 
yields convergence of approximating equations to the solution of \eqref{1},
replacing the monotonicity properties used e.g. in \cite{nupa}.
The infinite dimensional integration by parts formula is
with respect to the law of a Brownian bridge conditioned to stay in the domain $O$, proved in \cite{hari}, extending the first formula of this kind, which appeared
in \cite{za}.

We also want to mention that a similar equation, written in the
abstract form of a {\it stochastic differential inclusion}
\begin{equation}\label{spdeH}
dX_t + (AX_t+N_K(X_t))dt \ni dW_t, \qquad X_0=x
\end{equation}
has been considered in \cite{bdt}, where 
$A:D(A)\subset H\mapsto H$ is a self-adjoint positive definite operator
in a Hilbert space $H$,
$K\subset H$ is a closed convex subset with regular boundary,
$N_K(y)$ is the normal cone to $K$ at $y$ and $W$
is a cylindrical Wiener process in $H$.
The authors of \cite{bdt} assume crucially that
$K$ has {\it non-empty interior} in $H$. Our equation \eqref{1} could
be interpreted as an example of \eqref{spdeH} in the framework of \cite{bdt}, where in our case $H=L^2([0,1];\R^d)$ and
\[
K:=\left\{x\in L^2([0,1];\R^d): x_\theta\in \overline O \quad {\rm for \ all} \ \theta\in[0,1]\right\}.
\]
However, in the topology of $L^2([0,1];\R^d)$, $K$ has empty interior
and therefore the approach of \cite{bdt} does not work in our case.
Moreover, our results are somewhat stronger than those of \cite{bdt},
which only deal with the generator and the Dirichlet form rather
than with existence and uniqueness of solutions of the SPDE, as 
we do.

\medskip
The paper is organized as follows. In section 2 we give a precise
definition of solutions to equation \eqref{1}, together with some
notation. In section 4 we introduce the approximating equation and
recall the stability results already mentioned above. In section 5
we prove path continuity of the candidate solution. In section 6 we
state the integration by parts formula we need. In section 7 we prove
existence of weak solutions of equation \eqref{1}, and in section 8
pathwise uniqueness and existence of strong solutions. Finally, in
section 9 we prove some properties of the reflection measure $\eta$.

\section{Notations and setting}

 We first discuss the notion of solution of \eqref{1}. We consider a convex l.s.c.
 $\varphi:\overline O\mapsto[0,+\infty]$ such that $\varphi<+\infty$ on $O$.
 We denote 
by $D(\varphi):=\{\varphi<+\infty\}$ the {\it domain} of $\varphi$ and by 
$\partial \varphi$
 the {\it subdifferential} of $\varphi$:
\[
\partial \varphi(y):=\left\{ z\in \R^d: \, \varphi(w)\geq \varphi(y)+\langle z,w-y\rangle,
\ \forall \, w\in \overline O\right\}, \qquad y\in D(\varphi).
\]
The set $\partial \varphi(y)$ is non-empty, closed and convex in $\R^d$, and therefore it has
a unique element of minimal norm, that we call $\partial_0 \varphi(y)$. 
Notice that we do not assume smoothness of $y\mapsto \partial_0 \varphi(y)$.
We can also allow $\partial_0 \varphi(y)$ to blow up as $y\to\partial O$, but
not too fast. Indeed, throughout
the paper we assume
that $\partial_0 \varphi:D(\varphi)\mapsto\R^d$ satisfies
\begin{equation}\label{assumphi}
\int_{O} |\partial_0 \varphi(y)|^2 \, dy<+\infty
\end{equation}
where $dy$ denotes the Lebesgue measure on $O$. This assumption
is not optimal, see Remark \ref{nonoptimal} below, but already covers interesting cases, like logarithmic divergences or polynomial divergences with small exponent,
see \cite{debgoud} or \cite{za2} for related studies in convex subsets of $\R$.

For two vectors $a,b\in\R^d$, we denote by $a\cdot b$ their canonical
scalar product.
We consider the Hilbert space $H:=L^2([0,1];\R^d)$, 
endowed with the canonical scalar product $\langle\cdot,\cdot\rangle$ and
norm $\|\cdot\|$,
\[
\langle h,k\rangle:=\int_0^1 h(\theta)\cdot k(\theta)\, d\theta, \qquad
\|h\|^2:=\langle h,h\rangle, \qquad h,k\in H.
\]
\begin{definition}
Let $x \in C\left([0,1];\overline O\right)$. An adapted triple $(u,\eta, W)$, defined on a complete filtered probability space $(\Omega, \cF, (\cF_t)_t,\bbP)$, is a weak solution of \eqref{1} if 
\begin{itemize}
\item a.s. $u\in C(]0,T]\times[0,1];\overline O)$ and $\E[\|u_t-x\|^2]\to 0$
as $t\downarrow 0$
%
\item a.s. $\eta$ is a positive measure on $]0,T]\times[0,1]$ such that
$\eta([\gep,T]\times[0,1])<+\infty$ for all $0<\gep\leq T$
\item a.s. the function $(t,\theta)\mapsto |\partial_0 \varphi(u(t,\theta))|$
is in $L^1_{\rm loc}([\gep,T]\times\,]0,1[)$ for all $0<\gep\leq T$
\item $W=(W^1,\ldots,W^d)$ is a vector of d independent copies of a Brownian sheet
\item for all $h \in C^2_c((0,1);\R^d)$ and $0<\gep\leq t$
\begin{equation}\label{wweak}
\begin{split}
\langle u_t-u_\gep, h\rangle = & \ 
\frac12\int_\gep^t \langle h'', u_s \rangle \,ds +  \int_\gep^t \int_0^1 h(\theta)
\cdot n(u(s,\theta)) \, \eta(ds,d\theta) \\  
  &  - \frac 12\int_\gep^t \langle  h, \partial_0\phi(u_s) \rangle \, ds
+ \int_\gep^t \int_0^1 h(\theta)\,W(ds,d\theta)
\end{split}
\end{equation}
\item a.s. the support of $\eta$ is contained in 
$\{(t,\theta): u(t,\theta)\in\partial O\}$, i.e.
\begin{equation}\label{contact}
\eta(\{(t,\theta) \,|\, u(t,\theta)\notin \partial{O}\}) = 0.
\end{equation}
\end{itemize}
A weak solution $(u,\eta, W)$ is said to be a strong solution if $(u,\eta)$ is
adapted to the natural filtration of $W$.
\end{definition}
We say that pathwise uniqueness holds for equation \eqref{1}
if any two weak solutions $(u^1,{\eta}^1,W)$ and $(u^2,{\eta}^2,Z)$ 
coincide. 
In this article we want to prove the following result:
\begin{theorem}\label{main}
For all  $x \in C\left([0,1];\overline O\right)$, the problem \eqref{1} enjoys pathwise uniqueness of weak solutions and
existence of a strong solution. 
\end{theorem}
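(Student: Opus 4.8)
The plan is to follow the classical Yamada--Watanabe scheme, adapted to the infinite-dimensional reflected setting: prove existence of a weak solution, prove pathwise uniqueness, and then invoke the standard principle that these two facts together imply existence of a strong solution. For the existence part I would construct the candidate solution as the weak limit of solutions $(u^\varepsilon,\eta^\varepsilon)$ of the approximating equation introduced in section~4, in which the reflection term and the singular drift $\partial_0\varphi$ are replaced by smooth penalizations (e.g. $\eta^\varepsilon$ replaced by a Yosida-type approximation of the normal-cone force and $\partial_0\varphi$ by $\partial\varphi^\varepsilon$ for the Moreau--Yosida regularization $\varphi^\varepsilon$). The crucial input here is the stability Theorem~\ref{stability} for Fokker--Planck equations with log-concave reference measures: the invariant measure of the approximating dynamics is the law of a Brownian bridge penalized by $\exp(-\varphi^\varepsilon)$ and confined (in the limit) to $O$, and this sequence of reference measures is log-concave and converges in the appropriate sense; the stability result then yields convergence of the associated Markov semigroups and hence tightness and convergence of the laws of $(u^\varepsilon,\eta^\varepsilon,W)$. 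One must then identify the limit: the SPDE terms pass to the limit by standard arguments, the contact condition \eqref{contact} follows from the convergence of the penalization measures together with the path continuity established in section~5, and the local integrability of $\partial_0\varphi(u)$ uses assumption \eqref{assumphi} combined with the integration by parts formula of section~6, which controls the occupation measure of $u$ near $\partial O$.

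For pathwise uniqueness I would take two weak solutions $(u^1,\eta^1,W)$ and $(u^2,\eta^2,W)$ driven by the same noise, set $v:=u^1-u^2$, and estimate $\|v_t\|^2$. Writing the equation for $v$, the white noise contributions cancel, the linear heat part $\frac12\langle v'', v\rangle$ yields a nonpositive Dirichlet-energy term, and the drift difference $-\frac12\langle v,\partial_0\varphi(u^1_s)-\partial_0\varphi(u^2_s)\rangle$ is nonpositive by monotonicity of the subdifferential $\partial_0\varphi$. The essential point is the reflection term: $\int_0^1 v(s,\theta)\cdot\bigl(n(u^1(s,\theta))\,\eta^1(ds,d\theta)-n(u^2(s,\theta))\,\eta^2(ds,d\theta)\bigr)$. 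In the classical one-barrier case this is handled by the inequality $(u^1-u^2)\cdot(n^1\eta^1-n^2\eta^2)\le 0$ coming from the contact conditions; here one uses instead that $O$ is \emph{convex}, so that for $y\in\partial O$ and $z\in\overline O$ one has $(z-y)\cdot n(y)\ge 0$, and since $\eta^i$ is supported on $\{u^i\in\partial O\}$ one gets $v\cdot n(u^1)\,\eta^1\le 0$ and $-v\cdot n(u^2)\,\eta^2\le 0$ termwise. Hence all terms on the right-hand side are nonpositive, $\E\|v_t\|^2$ is nonincreasing and vanishes at $t=0$ by the initial condition, so $v\equiv 0$; the equality of the $\eta^i$ then follows from the equation.

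Given weak existence and pathwise uniqueness, existence of a strong solution follows from the infinite-dimensional Yamada--Watanabe theorem (as in \cite{nupa} and the references cited there): the weak solution's law is uniquely determined, the usual measurable-selection argument produces a solution adapted to the filtration generated by $W$, and pathwise uniqueness upgrades this to a genuine strong solution. Uniqueness in law is a byproduct. The main obstacle, as the introduction emphasizes, is precisely where the one-dimensional arguments break down: obtaining a priori control of the total variation of the vector reflection term $n(u)\,\eta$, whose sign is not definite because $n$ ranges over $\bbS^{d-1}$. This is why the proof cannot rely on monotonicity/comparison and instead routes the existence argument through the log-concavity and optimal-transport-based stability Theorem~\ref{stability}; making that convergence argument rigorous — in particular verifying the log-concavity hypotheses for the confined, $\varphi$-penalized Brownian-bridge measures and controlling the reflection measures in the limit via the integration by parts formula — is the technical heart of the paper, and I expect it to occupy the bulk of sections~7--9.
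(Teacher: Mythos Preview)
Your uniqueness argument and the Yamada--Watanabe conclusion match the paper's approach in spirit; the paper adds the Nualart--Pardoux mollification (testing with $h_{n,m}=((z\psi)*\zeta_{n,m})\psi$ rather than with $v$ itself) because $v=u^1-u^2$ is not an admissible test function in \eqref{wweak}, but the convexity inequality you state is exactly the mechanism that makes the reflection contribution nonpositive.

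The existence part, however, has a genuine gap. You propose to obtain $(u,\eta)$ as a weak limit of $(u^\varepsilon,\eta^\varepsilon)$, using Theorem~\ref{stability} for tightness. But Theorem~\ref{stability} gives convergence only of the \emph{laws of $u^\varepsilon$}; it says nothing about the approximating reflection terms. The paper is explicit at the start of Section~\ref{exweso} that this is precisely where the direct limit fails: since $n(u^\varepsilon)$ ranges over $\bbS^{d-1}$, one has no a priori bound on the total variation of the signed vector measure $n(u^\varepsilon)\,\eta^\varepsilon$, so one cannot even guarantee that a subsequential limit has bounded variation, let alone identify it as $n(u)\,\eta$ for a positive $\eta$ satisfying \eqref{contact}. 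Your proposal acknowledges this obstacle but still frames the solution as ``controlling the reflection measures in the limit via the integration by parts formula''; that is not what the paper does.

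Instead, the paper abandons the approximating $\eta^\varepsilon$ altogether and constructs $\eta$ \emph{directly from the limiting process $X$} using Dirichlet-form potential theory. Concretely: from Theorem~\ref{exmarkov} one has the Markov process $X$ associated to $(\cE,D(\cE))$; the Fukushima decomposition \eqref{tipler} writes $\langle X_t,h\rangle-\langle X_0,h\rangle=M^{[U^h]}_t+N^{[U^h]}_t$; the integration by parts formula \eqref{ibpnu} identifies the \emph{Revuz measure} of the zero-energy part $N^{[U^h]}$ as the signed measure $\frac12\Sigma^h$ of \eqref{Sigmah}; one then shows this Revuz measure is smooth, invokes the one-to-one Revuz correspondence between smooth measures and PCAFs (\cite[Theorem~5.1.4]{fot}) to produce additive functionals $A^g$, and finally assembles these into a genuine space-time measure $\eta$ via a Riesz-representation and kernel argument. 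The role of the integration by parts formula is thus not to control a limit of $\eta^\varepsilon$ but to \emph{compute the Revuz measure} that pins down $\eta$ intrinsically. This Dirichlet-form/Revuz-measure machinery is the missing idea in your proposal.
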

Next, we want to study some properties of the reflection measure $\eta$.
We recall that its support is contained in the contact set, i.e.
in the set $\{(t,\theta): u(t,\theta)\in\partial O\}$. 
The next result shows that $\eta$ is concentrated on a subset $\cS$
of the contact set, such that each section $\cS\cap(\{s\}\times[0,1])$,
$s\geq 0$, contains at most one point. Moreover, $u(s,\cdot)$ hits the
boundary $\partial O$ at this point and not elsewhere.
\begin{theorem}\label{main2}
A.s. the reflection measure
$\eta$ is supported by a Borel set $\cS\subset\,]0,+\infty[\, \times[0,1]$, i.e.
$\eta(\cS^c)=0$, such that
for all $s\geq 0$, the section $\{\theta\in[0,1]: (s,\theta)\in \cS\}$
has cardinality $0$ or $1$. Moreover, if $r(s)\in \cS\cap(\{s\}\times[0,1])$
then
\[
u(s,r(s))\in\partial O, \qquad u(s,\theta)\notin\partial O, \ 
\forall \theta\in[0,1]\setminus\{r(s)\}.
\]
\end{theorem}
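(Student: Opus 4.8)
The plan is to reduce the statement to a pointwise-in-time property of the random string $u(s,\cdot)$, exploiting the strong Feller / quasi-everywhere regularity provided by the Dirichlet form together with the integration by parts formula of Section 6. The key heuristic is that a Brownian bridge in $\R^d$ conditioned to stay in the convex domain $O$ touches the boundary $\partial O$, for a \emph{fixed} time parameter, in at most one point almost surely; this is where convexity of $O$ is essential. First I would establish this finite-dimensional-looking fact for the reference measure: let $\nu$ be the law on $C([0,1];\overline O)$ of the $\R^d$-valued Brownian bridge from $a$ to $b$ conditioned to remain in $\overline O$ (the measure appearing in the integration by parts formula). I claim that $\nu$-a.s. the set $\{\theta\in[0,1]:\gamma(\theta)\in\partial O\}$ has at most one element. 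Indeed, suppose $\gamma$ touches $\partial O$ at two distinct parameters $\theta_1<\theta_2$, at boundary points $y_1=\gamma(\theta_1)$, $y_2=\gamma(\theta_2)$. If $y_1\neq y_2$, take a supporting hyperplane at the midpoint of the chord $[y_1,y_2]$; since $O$ is strictly convex (smooth boundary) the open chord lies in $O$, so locally near $\theta_i$ the path $\gamma$ lies strictly on one side — but conditioning a Brownian bridge on the event that it stays on one side of a hyperplane and touches it at a prescribed endpoint is a null event unless forced, and here it is not forced. Making this rigorous is cleanest via the Cameron–Martin/Girsanov description of $\nu$ as an $h$-transform and the fact that one-dimensional Brownian motion conditioned to stay nonnegative (a Bessel(3) bridge) hits $0$ only at its endpoints; one then projects onto the normal direction at each alleged contact point. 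The case $y_1=y_2$ (the path returns to the same boundary point) is handled the same way by considering the excursion of $\langle\gamma(\theta)-y_1,n(y_1)\rangle$ away from $0$.

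Next I would transfer this from the reference measure $\nu$ to the law of the solution $u(s,\cdot)$ for fixed $s>0$. By the results of Sections 4–7, for each fixed $s>0$ the law of $u(s,\cdot)$ is absolutely continuous with respect to $\nu$ (it is obtained from the Fokker–Planck / Dirichlet form dynamics whose invariant measure is $\nu$, and the approximating dynamics converge to it), so the null event above remains null under the law of $u(s,\cdot)$. Hence for each fixed $s>0$,
\[
\bbP\big(\#\{\theta\in[0,1]:u(s,\theta)\in\partial O\}\leq 1\big)=1.
\]
By Fubini applied to the product of $\bbP$ with Lebesgue measure $ds$ on $]0,\infty[$, there is a $\bbP$-full event on which, for Lebesgue-a.e.\ $s$, the contact section of $u(s,\cdot)$ has at most one point. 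Now I invoke the contact condition \eqref{contact}: $\eta$ lives on $\{(t,\theta):u(t,\theta)\in\partial O\}$. Decompose $\eta$ (disintegrate in $t$) as $\eta(dt,d\theta)=\mu_t(d\theta)\,\kappa(dt)$ with $\kappa$ the $t$-marginal. The delicate point is that the exceptional set of times $s$ for which $u(s,\cdot)$ might hit $\partial O$ in more than one point is Lebesgue-null but need not be $\kappa$-null a priori. To rule this out, I would use the integration by parts formula of Section 6, which identifies the Revuz measure of $\eta$ and in particular shows $\kappa$ has no atoms and is absolutely continuous with respect to the appropriate notion of "time spent near the boundary"; combined with the path continuity proved in Section 5 this forces $\kappa$ to charge only times for which the section is a single point.

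Finally I would assemble $\cS$. On the full-probability event constructed above, define
\[
\cS:=\big\{(s,\theta)\in\,]0,\infty[\,\times[0,1]\ :\ u(s,\theta)\in\partial O\ \text{and}\ u(s,\theta')\notin\partial O\ \forall\theta'\neq\theta\big\}.
\]
This is a Borel set (the condition is Borel in $(s,\theta)$ by continuity of $u$), each time-section has cardinality $0$ or $1$ by construction, and $\eta(\cS^c)=0$ because $\cS^c$ is contained in the union of $\{u\notin\partial O\}$, which is $\eta$-null by \eqref{contact}, and $\{s: u(s,\cdot)$ hits $\partial O$ in $\geq 2$ points$\}\times[0,1]$, which is $\eta$-null by the previous paragraph. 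The displayed "moreover" assertion is then immediate from the definition of $\cS$. The main obstacle, as indicated, is the two steps of upgrading from "Lebesgue-a.e.\ $s$" to "$\eta$-a.e.\ $s$": one needs the quantitative control on the Revuz measure of $\eta$ from the integration by parts formula, and one needs the fixed-time single-contact-point property for $\nu$, whose proof rests on strict convexity of $O$ and the Bessel-bridge behaviour of the normal component.
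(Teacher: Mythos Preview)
Your proposal has a genuine gap, and it is precisely the one you yourself flag as ``the delicate point.'' Via absolute continuity of the law of $u(s,\cdot)$ with respect to $\nu$ and Fubini in $ds$, you obtain that for Lebesgue-a.e.\ $s$ the section $\{\theta:u(s,\theta)\in\partial O\}$ is at most a singleton. But the time-marginal $\kappa$ of $\eta$ is \emph{singular} with respect to Lebesgue measure: $\nu$ is in fact concentrated on paths that never touch $\partial O$ at all (this is used explicitly in the paper, see the proof of Lemma~\ref{totvar}), so your Step~1 is trivially true with ``zero points'' rather than ``at most one,'' and for each fixed $s$ one has $\bbP(u(s,\cdot)\cap\partial O\neq\emptyset)=0$. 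Hence the times that carry all of $\eta$ are exactly the exceptional times your Fubini argument cannot see. Your proposed remedy---$\kappa$ atomless and ``absolutely continuous with respect to time spent near the boundary''---cannot close this: no absolute-continuity statement for $\kappa$ against a Lebesgue-type measure helps when $\kappa\perp ds$. (Incidentally, the paper does not assume $O$ strictly convex, so that part of your Step~1 argument is also off target, though the step is moot anyway.)

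The paper bypasses this obstacle by working not with $\nu$ but with the Revuz measure of the PCAF $A_t:=\eta([0,t]\times[0,1])$, namely $\frac12\int_{\partial O}\sigma(dy)\,\Sigma(y,dw)$. By the very construction of $\Sigma(y,\cdot)$ (two Brownian motions killed at first exit, glued at $y$), a path $w$ drawn from $\Sigma(y,\cdot)$ hits $\partial O$ at exactly one parameter, $S_w$. Writing $\psi_I:=\un{\{w:\,w(\theta)\notin\partial O\ \forall\theta\notin I\}}$, this gives $\psi_I(w)=\un{I}(S_w)$ for $\Sigma(y,\cdot)$-a.e.\ $w$, so the Revuz measures of the PCAFs $t\mapsto\eta([0,t]\times I)$ and $t\mapsto\int_0^t\psi_I(X_s)\,dA_s$ coincide. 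The one-to-one Revuz correspondence then forces
\[
\eta([0,t]\times I)=\int_0^t\psi_I(X_s)\,\eta(ds\times[0,1])\qquad\text{for all Borel }I,
\]
and since $\psi_I\in\{0,1\}$ the disintegration kernel $\gamma(s,\cdot)$ takes only the values $0$ and $1$ on rational intervals, hence is a Dirac mass $\delta_{r(s)}$; the remaining assertions follow. The missing idea in your sketch is thus that the single-contact property must be read off from the Revuz measure of $\eta$ itself, not transported from $\nu$ via Fubini in Lebesgue time.
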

This property is analogous to that discovered in \cite{za} for
reflected SPDEs in $[0,+\infty)$. We recall that in this one-dimensional
setting, sections of the contact set have been studied in detail in 
\cite{dmz}. It would be very interesting to prove the same kind of results
in our multi-dimensional setting.

\subsection{Notations} We fix now some notations which will be used
throughout the paper. 
Let $E:=H^{[0,\infty)}$ and define the canonical process
$X_t:E\mapsto H$, $t\geq 0$, $X_t(e):=e(t)$, and the associated
natural filtration
\[
\cF_\infty^0 \ := \ \sigma\{X_s,\, s\in[0,\infty)\}, \quad \cF_t^0 \ := \ \sigma\{X_s,\, s\in[0,t]\}, \quad t\in[0,+\infty].
\]
We 
denote by $\mu$ the law of the Brownian bridge from $a$ to $b$ in $\R^d$. 
Let us define 
\[
K:=\{x\in L^2([0,1];\R^d): x_\theta\in \overline O, \ {\rm for \ all} \ \theta\in[0,1]\}
\]
and for all $x\in H=L^2([0,1];\R^d)$ we define $U:H\mapsto [0,+\infty]$ as follows
\[
U(x):= \left\{ \begin{array}{ll}
{\displaystyle \int_0^1 \varphi(x_\theta)\, d\theta, \quad {\rm if} \ 
x\in K
}
\\ \\
+\infty, \qquad {\rm otherwise}.
\end{array} \right.
\]
\begin{lemma}
The probability measure $\nu$ on $K$
\begin{equation}\label{nu}
\nu(dx) := \frac1Z \, \exp(-U(x)) \, \mu(dx).
\end{equation}
is well defined, i.e. $\mu(K)>0$ and $Z:=\mu(e^{-U})\in\,]0,1]$.
\end{lemma}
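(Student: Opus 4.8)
The plan is to prove the two assertions separately; both are elementary, the only real input being a support property of the Brownian bridge. For the upper bound, note that by assumption $\varphi\geq 0$, so $U\geq 0$ on all of $H$, whence $e^{-U}\leq 1$ and $Z=\int_H e^{-U}\,d\mu\leq\mu(H)=1$. Since $e^{-U}$ vanishes outside $K$ we also have $Z\leq\mu(K)\leq 1$, so it remains only to show $Z>0$, which will simultaneously give $\mu(K)>0$.

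For positivity I would exploit the convexity of $O$ and the local boundedness of convex functions. Let $\ell_\theta:=(1-\theta)a+\theta b$, $\theta\in[0,1]$, be the affine path joining the endpoints of the bridge. Since $O$ is convex and $a,b\in O$, the compact arc $\ell([0,1])$ lies in the open set $O$, hence ${\rm dist}(\ell([0,1]),\partial O)=:2\delta>0$. Then $T:=\{y\in\R^d:{\rm dist}(y,\ell([0,1]))\leq\delta\}$ is bounded and closed, and a short triangle-inequality argument shows $T\subset O$; being a compact subset of the open convex set $O$ on which the convex function $\varphi$ is finite, hence continuous, $T$ carries a bound $M:=\sup_T\varphi<\infty$.

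Now consider the event $A:=\{x:\sup_{\theta\in[0,1]}|x_\theta-\ell_\theta|\leq\delta\}$, where $x$ ranges over continuous paths (which carry full $\mu$-mass). For $x\in A$ every value $x_\theta$ lies in $T\subset\overline O$, so $A\subset K$ and $U(x)=\int_0^1\varphi(x_\theta)\,d\theta\leq M$; therefore $Z\geq\int_A e^{-U}\,d\mu\geq e^{-M}\mu(A)$ and $\mu(K)\geq\mu(A)$. It thus suffices to check $\mu(A)>0$, i.e. that the Brownian bridge from $a$ to $b$ stays within $\delta$ of the chord $\ell$ with positive probability. This holds because $\mu$, viewed as a (nondegenerate) Gaussian measure on $C([0,1];\R^d)$ concentrated on $\{x_0=a,\,x_1=b\}$, has full topological support there, and $A$ contains an open neighbourhood of $\ell$ in that space; alternatively one invokes Cameron--Martin to translate by the smooth path $\ell$ and reduces to the strict positivity of a Brownian-bridge small-ball probability. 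Either way $\mu(A)>0$, so $Z\geq e^{-M}\mu(A)>0$ and $\mu(K)\geq\mu(A)>0$. The only step deserving a citation or a one-line justification is this last positivity $\mu(A)>0$, a standard support theorem for the Brownian bridge; everything else is routine.
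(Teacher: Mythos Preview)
Your proof is correct and follows essentially the same strategy as the paper: use $\varphi\geq 0$ for the upper bound $Z\leq 1$, and for positivity bound $\varphi$ on a compact subset of $O$ that the bridge visits with positive probability. The paper works with the set $K_\delta$ of paths staying in $O_\delta=\{y\in O:{\rm dist}(y,\partial O)>\delta\}$ rather than your tube around the chord, and simply asserts $\mu(K_\delta)>0$ where you invoke the support theorem explicitly; otherwise the arguments coincide.
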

\begin{proof}
Since $a,b\in O$ and the Brownian bridge has continuous paths, 
$\mu(K)$ is clearly positive. Analogously, if $O_\delta$ is the subset of $O$ of all elements with distance 
greater than $\delta>0$ from $\partial O$, then the convex function $\phi$ is bounded on 
$O_\delta$. Therefore $U$ is bounded on $K_\delta:=\{x\in L^2([0,1];\R^d): x_\theta\in O_\delta, \ {\rm for \ all} \ \theta\in[0,1]\}$
and $Z\geq \mu(e^{-U}\un{O_\delta})>0$.
\end{proof}
We note that $U$ is l.s.c. and convex.
For the next Lemma,  see \cite[Chapter 2]{bre}.
\begin{lemma}[Yosida approximation]\label{yosid}
Let $\Phi:\R^d\mapsto\R\cup\{+\infty\}$ be convex lower semi-continuous, and $\partial \Phi$ be the subdifferential of $\Phi$. Set for $n\in\N$
\[
\Phi_n(x):=\inf_{y\in \R^d}\left\{\Phi(y)+n\,\|x-y\|^2\right\}, \qquad
x\in \R^d.
\]
Then
\begin{enumerate}
\item $\partial \Phi_{n}$ is $n$-Lipschitz continuous
\item $\forall y \in D(\Phi)$,
$\Phi_{n}(y) \uparrow \Phi(y)$, as  $n \uparrow +\infty$
\item $\forall y \in D(\partial \Phi_{n})$,
\begin{center}
               $\lim\limits_{n \to +\infty} \partial \Phi_{n}(y)$ $=$ $\partial_0 \Phi(y)$, and $\vert \partial \Phi_{n}(y) \vert \uparrow \vert \partial_{0}\Phi(y)\vert$ as $n \rightarrow +\infty$
\end{center}
\end{enumerate}
\end{lemma}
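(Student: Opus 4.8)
Since the statement is the classical Moreau--Yosida regularisation package for a proper convex l.s.c.\ function, the plan is to reassemble the standard argument (see \cite[Ch.~2]{bre}) around the three items; throughout I assume $D(\Phi)\neq\emptyset$ and fix an affine minorant $\ell\le\Phi$. The first step is to introduce the resolvent: for fixed $x\in\R^d$ the map $y\mapsto\Phi(y)+n\|x-y\|^2$ is proper, l.s.c., strictly convex and coercive, hence it has a unique minimiser $J_nx$, and $\Phi_n(x)=\Phi(J_nx)+n\|x-J_nx\|^2<+\infty$. Thus $\Phi_n$ is everywhere finite, convex (an infimal convolution of $\Phi$ with $n\|\cdot\|^2$) and continuous, and the optimality condition for the minimisation reads $2n(x-J_nx)\in\partial\Phi(J_nx)$.

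For item 1 I would first show that $J_n$ is nonexpansive: setting $\zeta_i:=2n(x_i-J_nx_i)\in\partial\Phi(J_nx_i)$ and substituting $J_nx_i=x_i-\zeta_i/(2n)$ into the monotonicity inequality $\langle\zeta_1-\zeta_2,J_nx_1-J_nx_2\rangle\ge 0$ gives, via Cauchy--Schwarz, both $\|\zeta_1-\zeta_2\|\le 2n\|x_1-x_2\|$ and $\|J_nx_1-J_nx_2\|\le\|x_1-x_2\|$. Combining the subgradient inequality for $\Phi$ at $J_nx$ with the trivial bound $\Phi_n(x')\le\Phi(J_nx)+n\|x'-J_nx\|^2$ then shows that $\Phi_n$ is differentiable with $\partial\Phi_n(x)=2n(x-J_nx)$, so $\partial\Phi_n$ is single-valued and Lipschitz, which is item 1.

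For item 2, monotonicity in $n$ is immediate since the penalised functional increases with $n$, and taking $y=x$ gives $\Phi_n(x)\le\Phi(x)$; so the only thing left, for $x\in D(\Phi)$, is $\lim_n\Phi_n(x)\ge\Phi(x)$. I would obtain this from $n\|x-J_nx\|^2=\Phi_n(x)-\Phi(J_nx)\le\Phi(x)-\ell(J_nx)$, whose right-hand side is $O(1)+O(\|x-J_nx\|)$; this forces $J_nx\to x$, and then lower semicontinuity of $\Phi$ yields $\Phi(x)\le\liminf_n\Phi(J_nx)\le\liminf_n\Phi_n(x)$, so $\Phi_n(x)\uparrow\Phi(x)$.

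Item 3 (which I read with $y\in D(\partial\Phi)$) is where the real work is. Put $\zeta_n:=\partial\Phi_n(y)=2n(y-J_ny)\in\partial\Phi(J_ny)$; by the previous step $J_ny\to y$. Testing monotonicity of $\partial\Phi$ against an arbitrary $\xi\in\partial\Phi(y)$ gives $2n\|y-J_ny\|^2\le\langle\xi,y-J_ny\rangle\le\|\xi\|\,\|y-J_ny\|$, hence $|\zeta_n|\le\|\xi\|$ and therefore $|\zeta_n|\le|\partial_0\Phi(y)|$ for every $n$. The step I expect to be the main obstacle is that $n\mapsto|\zeta_n|$ is \emph{nondecreasing}: this is the monotonicity of the Yosida approximation in its parameter, and follows from the resolvent identity together with nonexpansiveness of the resolvents, i.e.\ \cite[Ch.~2]{bre}. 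Granting it, $|\zeta_n|$ increases to some $\ell^\ast\le|\partial_0\Phi(y)|$; since $(\zeta_n)$ is bounded, any subsequential limit $\zeta^\ast$ satisfies $|\zeta^\ast|=\ell^\ast$, and as $\zeta_{n_k}\in\partial\Phi(J_{n_k}y)$ with $J_{n_k}y\to y$ and $\partial\Phi$ closed, $\zeta^\ast\in\partial\Phi(y)$; minimality of $\partial_0\Phi(y)$ then forces $\ell^\ast=|\partial_0\Phi(y)|$ and $\zeta^\ast=\partial_0\Phi(y)$, so the whole sequence converges, $\zeta_n\to\partial_0\Phi(y)$ and $|\zeta_n|\uparrow|\partial_0\Phi(y)|$. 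Apart from that monotonicity, everything is a routine consequence of the monotonicity and closed-graph properties of subdifferentials.
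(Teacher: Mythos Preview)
The paper does not actually prove this lemma: it is stated with the pointer ``see \cite[Chapter 2]{bre}'' and used as a black box. Your proposal is precisely the standard Moreau--Yosida argument from that reference, and it is correct; in particular your reading of item~3 as ``$y\in D(\partial\Phi)$'' is the right one (since $\Phi_n$ is everywhere differentiable, $D(\partial\Phi_n)=\R^d$, so the printed hypothesis is a slip). One minor remark: with the normalisation $\Phi_n(x)=\inf_y\{\Phi(y)+n\|x-y\|^2\}$ your computation correctly yields $\partial\Phi_n=2n(I-J_n)$ and hence a $2n$-Lipschitz bound, not the ``$n$-Lipschitz'' stated; this is a harmless constant for the rest of the paper.
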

Then we define $U_n:H\mapsto [0,+\infty)$ as follows
\begin{equation}\label{U_n}
U_n(x):= \int_0^1 \Phi_n(x_\theta)\, d\theta, \quad {\rm if} \ 
x\in H.
\end{equation}
\begin{rem}\label{nonoptimal}{\rm
The assumption \eqref{assumphi} on $\phi$ is far from optimal.
In fact, our approach covers a more general class of non-linearity;
indeed, the proof we give below yields Theorems \ref{main} and \ref{main2}
under the assumption
\begin{equation}\label{assumphi2}
\int \int_\delta^{1-\delta} |\partial_0 \varphi(x_\theta)|^2 \, d\theta
\, \nu(dx)<+\infty, \qquad \forall \delta\in(0,1/2),
\end{equation}
see Lemma \ref{nuphi} below.
}
\end{rem}

Finally, we need to introduce some function spaces. We denote by $C_b(H)$ the
Banach space of all $\varphi:H\mapsto{\mathbb R}$ being bounded
and continuous in the norm of $H$, endowed with the norm
$\|\varphi\|_\infty:=\sup |\varphi|$. Moreover we 
denote by $\mathcal{FC}^1$ the set of all functions
 $F$ of the form
\begin{equation}
F(w) = f(\langle l_1, w\rangle, ..., \langle l_n, w\rangle), \qquad w\in H,
\end{equation}
with $n\in \bbN$, $l_i \in L^2(0,1)$ and $f\in C_b^1(\bbR^n)$.

\section{The approximating equation}
Let us introduce the convex function
$\Phi:\R^d\mapsto\R\cup\{+\infty\}$
\begin{equation}\label{Phi}
\Phi(y):=\left\{ \begin{array}{ll}
{\displaystyle \varphi(y), \quad {\rm if} \
y\in \overline O
}
\\ \\
+\infty, \qquad {\rm otherwise}
\end{array} \right.
\end{equation}
and its Yosida approximation $\Phi_n$, defined as in Lemma \ref{yosid}. 
We introduce the SPDE
\begin{equation}\label{appro}
\left\{ \begin{array}{ll}
{\displaystyle
\frac{\partial u_n}{\partial t}=\frac 12
\frac{\partial^2 u_n}{\partial \theta^2} - \frac 12\partial\Phi_n(u_n(t,\theta))
 + \dot W(t,\theta) }
\\ \\
u_n(0,\theta)=x(\theta), \ u_n(t,0)=a, \ u_n(t,1)=b
\end{array} \right.
\end{equation}
By Lemma \ref{yosid}, $\partial \Phi_n$ is Lipschitz continuous,
and therefore it is a classical result 
that for any $x\in H$ equation \eqref{appro} has a unique solution $u_n$,
which is moreover a.s. continuous on $]0,\infty[\, \times[0,1]$. 

Equation
\eqref{appro} is a natural approximation of equation \eqref{1} and one
expects $u_n$ to converge to $u$ in some sense as $n\to\infty$.
A convergence in law indeed holds and follows from a general result
proven in \cite{asz}, see the discussion in Theorem \ref{stability}
below. 

We denote by $\bbP^n_x$ the law on $E=H^{[0,\infty)}$ of $(u^n(t,\cdot))_{t\geq 0}$,
solution of \eqref{appro}. We also define the probability measure
\begin{equation}\label{nu^n}
\nu_n(dx) := \frac1{Z_n} \, \exp(-U_n(x)) \, \mu(dx), \qquad
U_n(x):=\int_0^1 \Phi_n(x_\theta)\, d\theta,
\end{equation}
and the symmetric bilinear form $(\cE^n, \mathcal{FC}^1)$
\begin{equation}\label{e^n}
\cE^n(F,G):=\frac12\int \langle\nabla F,\nabla G\rangle \, d\nu_n, \qquad F,G\in \mathcal{FC}^1.
\end{equation}
We denote by $(P^n_t)_{t\geq 0}$ the transition semigroup associated to
equation \eqref{appro}:
\[
P^n_t\varphi(x):=\E^n_x(\varphi(X_t)), \qquad \forall \, \varphi\in C_b(H),
\ x\in H, \ t\geq 0,
\]
and the associated resolvent
\[
R^n_\lambda\varphi(x):=\int_0^\infty e^{-\lambda t} \,
{\mathbb E}_x^n \, [\varphi(X_t)]\, dt, \qquad
x\in H, \ \lambda>0.
\]
The following result is well known, see \cite{maro} and \cite{dpz2}.
\begin{theorem}\label{apprteo} \ 
\begin{enumerate}
\item $({\cE}^n,\mathcal{FC}^1)$ is
closable in $L^2(\nu_n)$: we denote
by $({\cE}^n,D({\cE}^n))$
the closure. 
\item $(\bbP^n_x)_{x\in H}$ is a Markov process,
associated with the Dirichlet form $(\cE^n,D(\cE^n))$ in $L^2(\nu_n)$, i.e.
for all $\lambda>0$ and $\varphi\in L^2(\nu_n)$, 
$R_\lambda^n\varphi\in D({\cE}^n)$ and:
\[
\lambda \int_H R_\lambda^n\varphi \, \psi \,
d\nu_n + 
{\cE}^n(R^n_\lambda
\varphi,\psi) \, = \, \int_H \varphi \, \psi \, d\nu_n,
\quad \forall\psi\in D({\cE}^n).
\]
\item $\nu_n$ is the unique
invariant probability measure of $(P^n_t)_{t\geq 0}$. Moreover, 
$(P^n_t)_{t\geq 0}$ is symmetric with respect to
$\nu_n$.
\end{enumerate}
\end{theorem}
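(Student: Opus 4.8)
The plan is to follow the standard theory of gradient Dirichlet forms associated to semilinear dissipative SPDEs; the statement is essentially an assembly of results from \cite{maro} and \cite{dpz2}, so I will only indicate the main points.

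\emph{Closability (part (1)).} First I would use that $\mu$, being the law of a Brownian bridge, is a non-degenerate Gaussian measure on $H$ whose Cameron--Martin space contains $C^2_c((0,1);\R^d)$; hence the gradient form $\frac12\int\langle\nabla F,\nabla G\rangle\,d\mu$ on $\mathcal{FC}^1$ is closable in $L^2(\mu)$ by the Gaussian integration by parts formula (see \cite[Ch. II]{maro}). Since $\Phi_n$ is convex, everywhere finite and $C^1$ with $n$-Lipschitz gradient, $U_n$ is convex, $C^1$ on $H$, and satisfies $0\le U_n(x)\le C_n(1+\|x\|^2)$ and $\|\nabla U_n(x)\|\le C_n(1+\|x\|)$; in particular $\nabla U_n\in L^2(\nu_n;H)$, since $\nu_n\le Z_n^{-1}\mu$ and $\mu$ has Gaussian tails. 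The density $\rho_n:=Z_n^{-1}e^{-U_n}$ is therefore bounded, strictly positive $\mu$-a.s., and $\nabla\log\rho_n=-\nabla U_n\in L^2(\nu_n;H)$, so closability of $(\cE^n,\mathcal{FC}^1)$ in $L^2(\nu_n)$ follows from the closability with respect to $\mu$ by the usual perturbation argument (e.g. \cite[Ch. II]{maro}). The closure $(\cE^n,D(\cE^n))$ is then a symmetric Dirichlet form --- the Markovian contraction property being automatic for gradient forms --- and, $H$ being a separable Hilbert space carrying the Gaussian measure $\mu$, the form is quasi-regular and hence properly associated with a $\nu_n$-symmetric Markov process on $H$.

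\emph{Identification with the SPDE (parts (2) and (3)).} Next, because $\partial\Phi_n$ is globally Lipschitz, \eqref{appro} defines a Feller diffusion on $H$ with transition semigroup $(P^n_t)$, and I would apply It\^o's formula to $F(u^n_t)$ for a smooth cylinder function $F(x)=f(\langle l_1,x\rangle,\dots,\langle l_m,x\rangle)$ with $f\in C^2_b(\R^m)$ and $l_i\in C^2_c((0,1);\R^d)$ to identify the generator of $(P^n_t)$ on such $F$ as
\[
L^n F(x)=\frac12\operatorname{tr}\bigl(\nabla^2F(x)\bigr)+\Bigl\langle\tfrac12 Ax-\tfrac12\nabla U_n(x),\,\nabla F(x)\Bigr\rangle,
\]
with $A=\partial^2_\theta$ equipped with Dirichlet data $a,b$ at the endpoints. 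Using that $\mu$ is the (reversible) invariant measure of the linear equation $\partial_t v=\tfrac12 Av+\dot W$ and that $\nabla U_n$ is the $\mu$-logarithmic gradient of the density of $\nu_n$, one combines the Cameron--Martin integration by parts for $\mu$ with the self-adjointness of $A$ to obtain
\[
\int_H L^n F\cdot G\,d\nu_n=-\frac12\int_H\langle\nabla F,\nabla G\rangle\,d\nu_n=-\cE^n(F,G)
\]
for all such $F,G$. Thus $\nu_n$ is infinitesimally invariant for $(P^n_t)$ and the generator of $(P^n_t)$ coincides with the generator of $(\cE^n,D(\cE^n))$ on the core of smooth cylinder functions; since this core is dense in $D(\cE^n)$ for the form norm and $(P^n_t)$ is a $C_0$-semigroup on $L^2(\nu_n)$, the two semigroups, and therefore the two resolvents, agree, which is exactly the association asserted in part (2); symmetry of $(P^n_t)$ with respect to $\nu_n$ in part (3) is then immediate from symmetry of the form. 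For uniqueness of the invariant probability measure I would invoke that, $\partial\Phi_n$ being Lipschitz, the transition semigroup of \eqref{appro} is both strong Feller and irreducible on $H$ (standard for semilinear SPDEs with additive space-time white noise in one spatial dimension, see \cite{dpz2}), so that Doob's theorem yields uniqueness; combined with the infinitesimal invariance already established, $\nu_n$ is that measure.

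\emph{Main obstacle.} The one non-routine point is the integration by parts identity $\int L^nF\cdot G\,d\nu_n=-\cE^n(F,G)$: it has to absorb simultaneously the unbounded drift $\tfrac12 Ax$, via reversibility of the Ornstein--Uhlenbeck part with respect to $\mu$, and the nonlinear, merely $C^1$ perturbation $\tfrac12\nabla U_n$, via the tilting $\nu_n=Z_n^{-1}e^{-U_n}\mu$; once this is in place, everything else is soft functional analysis of symmetric Dirichlet forms together with well-known properties of Lipschitz semilinear SPDEs.
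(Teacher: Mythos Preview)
Your proposal is correct and follows exactly the route the paper takes: the paper does not give a proof at all but simply states that the result is well known and refers to \cite{maro} and \cite{dpz2}, which are precisely the sources you invoke for closability via Gaussian integration by parts plus density perturbation, and for identification of the SPDE semigroup with the Dirichlet form together with strong Feller/irreducibility and Doob's theorem. Your sketch just spells out what those references contain, so there is nothing to compare.
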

We recall an important property of equation \eqref{appro}: the associated
transition semigroup $(P^n_t)_{t\geq 0}$ is {\it Strong Feller}, i.e. $P_t^n$ maps bounded Borel functions into bounded continuous
functions
for all $t>0$. Indeed, $P_t^n$ satisfies for any bounded Borel $\varphi:H\mapsto\R$
\begin{equation}\label{sfellen}
|P^n_t\varphi(x)-P^n_t\varphi(y)| \, \leq \, 
\frac{\|\varphi\|_\infty}{\sqrt t} \, \|x-y\|_H, \qquad x,y\in
H, \ t>0,
\end{equation}
see \cite[Proposition 4.4.4]{sc}.

\section{The Dirichlet Form}
One of the main tools of this paper is the Dirichlet form associated with
equation \eqref{1}. Recall the definition \eqref{nu}
of the probability measure $\nu$.
Notice that $\mu$ is Gaussian and $U$ is convex. It follows that
$\nu$ is {\it log-concave}, i.e. for all pairs of open sets $A,B \subset H$  we have:
\[
\log\nu((1-t)A+tB) \geq (1-t)\log\nu(A)+t\log\nu(B) 
\] 
where $(1-t)A+tB := \{(1-t)a+tb \,|\, a\in A,b\in B\}$ for $t \in [0,1]$,
see for instance \cite[Theorem 9.4.11]{ags}. Notice that $\nu_n$ defined in 
\eqref{nu^n} above is also log-concave for the same reason.

Let us consider now the bilinear form
\begin{equation}\label{e^}
\cE(F,G):=\frac12\int \langle\nabla F,\nabla G\rangle \, d\nu, \qquad F,G\in \mathcal{FC}^1.
\end{equation}
Then by \cite[Theorem 1.2]{asz}
\begin{theorem}\label{exmarkov} In the previous setting we have:
\begin{enumerate}
\item The bilinear form $(\cE, \mathcal{FC}^1)$ is closable in $L^2(\mu)$ and its closure 
$(\cE,D(\cE))$ is a Dirichlet form.
\item 
There is a Markov family $(\bbP_x)_{x\in K}$ of probability measures on the 
canonical path space $(K^{[0,+\infty[},\cF,(\cF_t), (X_t)_{t\geq 0})$ associated with $\cE$.
\item for all $x\in K$, $\bbP_x$-a.s. $(X_t)_{t> 0}$ is continuous in $H$ and
$\E_x[\|X_t-x\|^2]\to 0$ as $t\to 0$.
\end{enumerate} 
\end{theorem}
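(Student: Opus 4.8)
The plan is to deduce all three assertions directly from \cite[Theorem 1.2]{asz}, which establishes exactly this statement --- closability of the gradient bilinear form, the Dirichlet-form property of its closure, and existence of an associated Markov family with continuous trajectories and $L^2$-convergence at time $0$ --- for the form $\frac12\int\langle\nabla F,\nabla G\rangle\,d\gamma$ attached to an \emph{arbitrary} log-concave Borel probability measure $\gamma$ on a separable Hilbert space. So on our side the only point to verify is that $\nu$ falls within that framework. This is immediate: $H=L^2([0,1];\R^d)$ is a separable Hilbert space; $\nu$ is a genuine Borel probability measure on $H$, being an explicit bounded density times the Gaussian measure $\mu$ (the preceding Lemma gives $Z\in\,]0,1]$); and $\nu$ is log-concave because $\mu$ is Gaussian, hence log-concave, $U$ is convex and lower semicontinuous, and tilting a log-concave measure by $e^{-U}$ with $U$ convex l.s.c.\ preserves log-concavity (cf.\ \cite[Theorem 9.4.11]{ags}). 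Finally, by construction $\nu(K)=1$ with $K$ closed and convex, so $K$ is the (topological support-containing) state space; this fixes the index set appearing in (2) and (3).

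It is worth isolating where the real content of \cite[Theorem 1.2]{asz} lies, since the present paper relies on it crucially. Part (1) --- closability, and the fact that the closure is a symmetric Dirichlet form --- is the finite-energy part of that theorem and is insensitive to the lack of smoothness of $\partial_0\varphi$: only convexity and lower semicontinuity of $U$ are used, through the cylinder functions $\mathcal{FC}^1$ forming a dense subalgebra stable under composition with $C^1_b$ maps. Parts (2)--(3) are the probabilistic part: the form is quasi-regular, hence associated with a Hunt Markov family $(\bbP_x)_{x\in K}$ on the path space over $K$, and log-concavity is precisely what upgrades this to an \emph{everywhere}-defined family whose paths are $\bbP_x$-a.s.\ continuous in $H$ with $\E_x[\|X_t-x\|^2]\to0$ as $t\downarrow0$, for every $x\in K$ rather than merely quasi-every $x$.

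This last upgrade is the genuinely delicate step, and it is exactly the point where the approximating equation \eqref{appro} enters: the tilted measures $\nu_n$ of \eqref{nu^n} are log-concave for the same reason as $\nu$, their semigroups $(P^n_t)$ are strong Feller with the bound \eqref{sfellen} whose constant is \emph{independent of $n$}, and this, together with uniform-in-$n$ path-continuity estimates for the solutions of \eqref{appro}, yields tightness in $C([0,\infty);H)$ and a limiting family depending continuously on the starting point. The stability result recalled in Theorem \ref{stability} below then identifies this limit with the process of $\cE$ and gives $\bbP^n_x\Rightarrow\bbP_x$. Since all of this is imported wholesale from \cite{asz}, the only obstacle for us is of a bookkeeping nature: checking that our $(\nu,\nu_n,\cE,\cE^n)$ really are the objects covered by \cite[Theorem 1.2]{asz}, which reduces to the log-concavity verification already made.
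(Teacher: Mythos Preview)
Your proposal is correct and takes essentially the same approach as the paper: the paper simply cites \cite[Theorem 1.2]{asz} after observing (in the paragraph immediately preceding the theorem) that $\nu$ is log-concave, which is exactly your argument. The additional expository material you provide about what \cite{asz} actually proves is accurate and helpful, but goes beyond what the paper itself supplies.
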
 
Let us remark that clearly, since $U_n\uparrow U$,
we have
\begin{equation}\label{nu^ntonu}
\nu_n\rightharpoonup \nu.
\end{equation}
A look at the Dirichlet forms \eqref{e^n} and \eqref{e^} suggests that
the laws of the associated processes could also
converge. In general this is false, and a number of papers have been
devoted to this problem, see for instance \cite{kushi} and \cite{koles}.
However, it turns out that, in the setting of Dirichlet forms of the form \eqref{e^n} with log-concave reference measures, \eqref{nu^ntonu}
does imply convergence in law of the associated Markov processes.
This general {\it stability property} is one of the main results of \cite{asz}. By \eqref{nu^ntonu} and \cite[Theorem 1.5]{asz} we have that
\begin{theorem}[Stability]\label{stability}
For any sequence $x_n\in H$ converging 
to $x\in K$, we have that 
\begin{itemize}
\item[(a)] $\bbP_{x_n}^n\to\bbP_x$ weakly in $H^{[0,+\infty[}$ as $n\to\infty$,
\item[(b)] for all $0<\varepsilon\leq
T<+\infty$, $\bbP_{x_n}^n\to\bbP_x$ weakly in $C([\varepsilon,T];H_w)$,
\item[(c)] for all $0\leq T<+\infty$, $\bbP_{\nu_n}^n\to\bbP_{\nu}$ weakly in
$C([0,T];H_w)$,
\end{itemize}
where $H_w$ is $H$ endowed with the weak topology and 
\[
\bbP_{\nu_n}^n = \int \bbP_{y}^n\, \nu_n(dy), \qquad
\bbP_{\nu} = \int \bbP_{y}\, \nu(dy).
\]
\end{theorem}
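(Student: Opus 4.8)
The plan is to obtain Theorem~\ref{stability} as a direct application of the abstract stability result \cite[Theorem 1.5]{asz}; accordingly, the substantive part is to verify that its hypotheses hold in our setting. These are: $\nu_n$ and $\nu$ are log-concave probability measures on $H$; the forms $\cE^n$, $\cE$ are the $L^2(\nu_n)$- and $L^2(\nu)$-closures of the gradient forms $F\mapsto\frac12\int\langle\nabla F,\nabla F\rangle\,d\nu_n$ on $\mathcal{FC}^1$; and $\nu_n\rightharpoonup\nu$. Log-concavity and the description of the forms are recorded in Section~4 (after \eqref{e^}, together with \eqref{e^n}, \eqref{e^}, Theorem~\ref{apprteo}(1) and Theorem~\ref{exmarkov}(1)), so only $\nu_n\rightharpoonup\nu$ remains. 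This is \eqref{nu^ntonu} and follows from $\Phi_n\uparrow\Phi$ (Lemma~\ref{yosid}(2)): by monotone convergence $U_n(x)\uparrow U(x)$ for every $x\in H$, hence $e^{-U_n}\downarrow e^{-U}$ and $Z_n=\mu(e^{-U_n})\downarrow Z>0$; and for any bounded continuous $G:H\mapsto\R$, dominated convergence gives $\int G\,d\nu_n=Z_n^{-1}\,\mu(G\,e^{-U_n})\to Z^{-1}\,\mu(G\,e^{-U})=\int G\,d\nu$.

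For completeness I would recall the two mechanisms behind \cite[Theorem 1.5]{asz}. First, log-concavity of the $\nu_n$ together with $\nu_n\rightharpoonup\nu$ forces Mosco convergence (in the sense of Kuwae--Shioya, for the varying spaces $L^2(\nu_n)$) of $\cE^n$ to $\cE$; the delicate half is the construction of recovery sequences, which exploits that cylinder functions from $\mathcal{FC}^1$ serve as approximants and that the gradient form regularizes well against log-concave measures. Mosco convergence is equivalent to strong convergence of the resolvents $R^n_\lambda$ and semigroups $P^n_t$ in $L^2$. Second, one needs tightness of $\{\bbP^n_{\nu_n}\}$ in $C([0,T];H_w)$: the bound $\int\|x\|^2\,\nu_n(dx)\le Z^{-1}\int\|x\|^2\,\mu(dx)<\infty$, finite because the Brownian bridge has finite second moment, controls $\sup_{t\le T}\|X_t\|$ in probability via stationarity under $\nu_n$, and an energy (Lyons--Zheng-type) estimate controls the modulus of continuity of each $t\mapsto\langle X_t,l\rangle$, $l\in H$; since the weak topology of $H$ is metrizable on norm-bounded sets, this yields tightness in $C([0,T];H_w)$, and the limit is pinned down as $\bbP_\nu$ by the semigroup convergence together with the Markov property. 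This proves (c).

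To pass from the stationary statement (c) to (a) and (b), the key is that the strong Feller bound \eqref{sfellen} is \emph{uniform in $n$}: for $0<\varepsilon\le t$ the maps $x\mapsto P^n_t\varphi(x)$ are Lipschitz with constant $\|\varphi\|_\infty/\sqrt\varepsilon$ independently of $n$, so the family $\{x\mapsto\bbP^n_x|_{[\varepsilon,\infty)}\}_n$ is equicontinuous on $H$. Combining this equicontinuity with (c) and with $x_n\to x$ in $H$, a routine argument identifies the weak limit of $\bbP^n_{x_n}$ as $\bbP_x$, first on intervals $[\varepsilon,T]$ — giving (b) in $C([\varepsilon,T];H_w)$ — and then, letting $\varepsilon\downarrow0$ and invoking the continuity $\E_x[\|X_t-x\|^2]\to0$ as $t\downarrow0$ from Theorem~\ref{exmarkov}(3), on all of $[0,\infty)$, which is (a).

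The only genuinely hard step — and precisely the one we outsource to \cite{asz} — is the Mosco-convergence-plus-tightness argument of the second paragraph: showing, without any monotonicity of the approximating nonlinearities, that weak convergence of log-concave reference measures propagates to weak convergence on path space of the associated symmetric diffusions. By contrast, the verification $\nu_n\rightharpoonup\nu$ and the equicontinuity transfer from (c) to (a)--(b) are elementary.
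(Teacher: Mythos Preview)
Your proposal is correct and matches the paper's approach exactly: the paper does not give an independent proof but simply invokes \cite[Theorem~1.5]{asz} together with \eqref{nu^ntonu}, and you do the same, additionally spelling out the verification of the hypotheses (log-concavity, gradient-form structure, and the monotone/dominated-convergence argument for $\nu_n\rightharpoonup\nu$) and sketching the internal mechanisms of \cite{asz}. The supplementary exposition on Mosco convergence, tightness, and the transfer from (c) to (a)--(b) goes beyond what the paper records, but is consistent with the cited reference.
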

This stability result will be very useful to prove several properties of the solution to \eqref{1}. We notice
that, by point (a) of Theorem \ref{stability},
\begin{equation}\label{convsemi}
\lim_{n\to\infty} P^n_t\varphi(x)=P_t\varphi(x):=
\E_x(\varphi(X_t)), \qquad \forall \, t>0, \ x\in K, \ \varphi\in C_b(H).
\end{equation}
This already allows to draw an important consequence of Thorem \ref{stability}.

\begin{prop}\label{absocon}
$ $
\begin{itemize}
\item
The Markov semigroup $(P_t)_{t\geq 0}$
associated with the Dirichlet form $(\cE,D(\cE))$ is Strong Feller, i.e.
for any bounded Borel $\varphi:H\mapsto\R$
\begin{equation}\label{sfelle}
|P_t\varphi(x)-P_t\varphi(y)| \, \leq \, 
\frac{\|\varphi\|_\infty}{\sqrt t} \, \|x-y\|_H, \qquad x,y\in
H, \ t>0.
\end{equation}
\item The Markov process  $(\bbP_x)_{x\in K}$ associated with 
$(\cE,D(\cE))$ satisfies the absolute continuity condition: the
transition probability $p_t(x,\cdot)=\bbP_x(X_t\in\cdot)$ is
absolutely continuous w.r.t. the invariant measure $\nu$
\begin{equation}\label{eq:absocon}
p_t(x,\cdot)\ll \nu(\cdot), \qquad x\in H, \ t>0.
\end{equation}
\end{itemize}
\end{prop}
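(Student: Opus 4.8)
The plan is to push the Strong Feller estimate \eqref{sfellen} from the approximations $P^n_t$ to the limiting semigroup $P_t$ by letting $n\to\infty$, to recast it as a Lipschitz bound for the transition kernels in total variation distance — which automatically promotes it from continuous to bounded Borel test functions — and finally to combine this bound with the invariance and the full support of $\nu$ to obtain \eqref{eq:absocon}. For the first point, fix $t>0$ and $\varphi\in C_b(H)$, apply \eqref{sfellen} to each $P^n_t$ and pass to the limit with the help of \eqref{convsemi}; this gives
\[
|P_t\varphi(x)-P_t\varphi(y)|\ \leq\ \frac{\|\varphi\|_\infty}{\sqrt t}\,\|x-y\|,\qquad x,y\in K,\ \varphi\in C_b(H)
\]
(the process, hence $P_t$, being defined for starting points in $K$, which is how one should read $x,y\in H$ in \eqref{sfelle}). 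Since $H$ is a separable metric space, every finite signed Borel measure $\lambda$ on $H$ satisfies $\|\lambda\|_{TV}=\sup\{\,|\int\varphi\,d\lambda|:\varphi\in C_b(H),\ \|\varphi\|_\infty\leq1\,\}$; applied to $\lambda=p_t(x,\cdot)-p_t(y,\cdot)$, the display above becomes $\|p_t(x,\cdot)-p_t(y,\cdot)\|_{TV}\leq\|x-y\|/\sqrt t$, and testing this against an arbitrary bounded Borel $\varphi$ yields \eqref{sfelle} in full generality. In particular $P_t\varphi$ is Lipschitz, hence continuous on $K$, for every bounded Borel $\varphi:H\to\R$.

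For the absolute continuity I would use that $(\cE,D(\cE))$ is a $\nu$-symmetric Dirichlet form, so $\nu$ is invariant for $(P_t)$: $\int_K P_t\varphi\,d\nu=\int_K\varphi\,d\nu$ for $\varphi\in C_b(H)$ by reversibility, hence $\int_K p_t(x,\cdot)\,\nu(dx)=\nu(\cdot)$. (Alternatively this is part (c) of Theorem \ref{stability} combined with the invariance of $\nu_n$ for $P^n_t$.) Now take a Borel set $A\subset H$ with $\nu(A)=0$; then $\int_K p_t(x,A)\,\nu(dx)=\nu(A)=0$ and $p_t(x,A)=P_t\un{A}(x)\ge0$, so $p_t(\cdot,A)=0$ $\nu$-a.e. on $K$. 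By the first part $x\mapsto p_t(x,A)$ is continuous on $K$, and if $\mathrm{supp}\,\nu=K$ a continuous function vanishing $\nu$-a.e. vanishes identically on $K$; hence $p_t(x,A)=0$ for every $x\in K$, which is \eqref{eq:absocon}.

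Finally, one must verify $\mathrm{supp}\,\nu=K$. As $\nu=Z^{-1}e^{-U}\mu$ is carried by $K$, it suffices to show every $L^2$-ball about a point $x\in K$ has positive $\nu$-mass. Given $x\in K$, using that $\overline O$ is convex and $a,b\in O$, one can find arbitrarily close to $x$ in $L^2$ a continuous path $z$ with $z(0)=a$, $z(1)=b$ and $z_\theta\in O$ for all $\theta$; by compactness $z$ stays in $\{y\in O:\mathrm{dist}(y,\partial O)>\delta\}$ for some $\delta>0$, a set on which $\varphi$ is bounded, so $U$ is bounded on some uniform neighbourhood $V$ of $z$ with $V\subset K$. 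The topological support of $\mu$ for the uniform topology being the set of all continuous paths from $a$ to $b$, we get $\mu(V)>0$, hence $\nu(V)\ge Z^{-1}e^{-\sup_V U}\mu(V)>0$, and $V$ may be chosen inside the prescribed $L^2$-ball. I expect this last identification of $\mathrm{supp}\,\nu$ — needed precisely to upgrade the $\nu$-a.e.\ vanishing of $p_t(\cdot,A)$ to vanishing at every point — to be the only step requiring a genuine (if routine) argument; the rest is a formal consequence of the stability Theorem \ref{stability}, the total variation reformulation, and the reversibility of $\nu$.
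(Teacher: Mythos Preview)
Your argument is correct and follows the same route as the paper: pass the uniform estimate \eqref{sfellen} to the limit via Theorem~\ref{stability}, then combine the Strong Feller property with invariance of $\nu$ and $\mathrm{supp}\,\nu=K$ to get \eqref{eq:absocon}. You are simply more explicit than the paper on two points it takes for granted --- the total-variation trick to promote \eqref{sfelle} from $C_b(H)$ to bounded Borel test functions, and the verification that $\mathrm{supp}\,\nu=K$ --- so there is no substantive difference in approach.
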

\begin{proof}
The first point follows from \eqref{sfellen} and the weak convergence
result of Theorem \ref{stability}-(a). The second claim follows from
the first. Indeed, let $A\subset H$ be a Borel set with $\nu(A)=0$.
Then for all $t>0$, by invariance $\nu(P_t\un{A}) = \nu(A)=0$, i.e.
$P_t\un{A}(x)=0$ for $\nu$-a.e. $x\in H$. But by the Strong Feller
property $P_t\un{A}$ is continuous on $K$, therefore we obtain that
$P_t\un{A}(x)=0$ for all $x$ in the support of $\nu$, which coincides with $K$.
\end{proof}

Finally, we give a result on existence and uniqueness of
invariant measures of \eqref{1}.
\begin{prop}\label{measinva}
There exists a unique invariant probability measure of
the Markov semigroup $(P_t)_{t\geq 0}$, and it is equal to $\nu$.
\end{prop}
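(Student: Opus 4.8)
The plan is to establish existence by recognising $\nu$ as a symmetric, hence invariant, measure for $(P_t)$, and to prove uniqueness by a Doob-type argument: every invariant probability measure will be shown to be absolutely continuous with respect to $\nu$ via the Strong Feller property, its density will then be seen to be $(P_t)$-invariant by symmetry, and finally the ergodicity of $\nu$ will force the density to be constant. For existence, recall that the Markov family $(\bbP_x)_{x\in K}$ of Theorem \ref{exmarkov} is associated with the Dirichlet form $(\cE,D(\cE))$ in $L^2(\nu)$, which is symmetric, and that the process is conservative (its paths stay in $K$ for all times). Hence the semigroup $(P_t)$ determined by \eqref{convsemi} satisfies $P_t1=1$ and is $\nu$-symmetric, so that $\int P_t\varphi\,d\nu=\int (P_t1)\,\varphi\,d\nu=\int\varphi\,d\nu$ for every $\varphi\in C_b(H)$; i.e.\ $\nu$ is invariant. (Alternatively one may pass to the limit in the invariance of $\nu_n$ for $P^n_t$, using Theorem \ref{stability}.)

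For uniqueness, let $\rho$ be an invariant probability measure of $(P_t)$; since each transition probability $p_t(x,\cdot)$ is carried by $K$, so is $\rho$. If $\nu(A)=0$ then $\int P_t\un{A}\,d\nu=\nu(A)=0$ by the invariance of $\nu$, so $P_t\un{A}=0$ $\nu$-a.e.; but $P_t\un{A}$ is continuous on $K$ by the Strong Feller property of Proposition \ref{absocon}, and $K=\mathrm{supp}(\nu)$, so $P_t\un{A}\equiv0$ on $K$ and therefore $\rho(A)=\int P_t\un{A}\,d\rho=0$. Thus $g:=d\rho/d\nu\in L^1(\nu)$ is nonnegative with $\int g\,d\nu=1$. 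The invariance of $\rho$ reads $\int (P_t\varphi)\,g\,d\nu=\int\varphi\,g\,d\nu$ for all bounded Borel $\varphi$; using the $\nu$-symmetry of $(P_t)$ together with its contraction property on $L^1(\nu)$, this gives $P_tg=g$ in $L^1(\nu)$ for every $t$. Truncating at level $M$ (so that $P_t(g\wedge M)\le g\wedge M$ while $\int P_t(g\wedge M)\,d\nu=\int g\wedge M\,d\nu$), each $g\wedge M$ is a bounded $(P_t)$-invariant function. It then suffices to know that $\nu$ is ergodic, i.e.\ that the only bounded $(P_t)$-invariant functions are $\nu$-a.e.\ constant: this forces $g\equiv1$, hence $\rho=\nu$.

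The crux of the proof, and the step I expect to be the main obstacle, is therefore the ergodicity of $\nu$. I would obtain it from irreducibility of the Dirichlet form, i.e.\ from the implication $\cE(F,F)=\frac12\int|\nabla F|^2\,d\nu=0\ \Rightarrow\ F$ is $\nu$-a.e.\ constant. Since $\nu=Z^{-1}e^{-U}\mu$ is a log-concave perturbation of the Gaussian Brownian-bridge law $\mu$ by the convex potential $U$, a Brascamp--Lieb (equivalently Bakry--\'Emery) argument transfers the spectral gap of $\mu$ to $\nu$, yielding a Poincar\'e inequality $\mathrm{Var}_\nu(F)\le C\,\cE(F,F)$ which immediately implies irreducibility and hence ergodicity; alternatively one argues directly that $\nabla F=0$ $\nu$-a.e.\ forces $F$ constant because $\mathrm{supp}(\nu)=K$ is convex, hence connected. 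The delicate point is to make such an argument rigorous in the infinite-dimensional setting, in particular for $F$ ranging over the full closure $D(\cE)$ rather than the core $\mathcal{FC}^1$.
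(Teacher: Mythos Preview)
Your argument is essentially correct, but it takes a different route from the paper. For existence, the paper does exactly your alternative: pass to the limit in the invariance of $\nu_n$ for $P^n_t$ via Theorem~\ref{stability} and the uniform Strong Feller bound. For uniqueness, however, the paper does \emph{not} go through ergodicity of the Dirichlet form; instead it uses a synchronous coupling at the level of the approximating equations. Given two invariant measures $m^1,m^2$, one picks independent initial data $q_1\sim m^1$, $q_2\sim m^2$ (independent of $W$), solves \eqref{appro} from each, and uses the monotonicity of $\partial\Phi_n$ to get the pathwise contraction $\|u^n_1(t,\cdot)-u^n_2(t,\cdot)\|\le e^{-\pi^2 t/2}\|q_1-q_2\|$; passing to the limit $n\to\infty$ via Theorem~\ref{stability} gives the same bound for the limiting processes, and since the law of $u_i(t,\cdot)$ is $m^i$ for all $t$, letting $t\to\infty$ forces $m^1=m^2$.

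The two arguments exploit the same underlying structure---convexity of $\Phi$ combined with the spectral gap $\pi^2$ of the linear part---but package it differently. The paper's coupling is more elementary and self-contained: it yields an explicit exponential rate and avoids entirely the issue you flag as delicate, namely making the irreducibility/Poincar\'e step rigorous for general $F\in D(\cE)$ in infinite dimensions. Your approach, on the other hand, is more intrinsic to the Dirichlet form and does not require revisiting the approximations; the ergodicity you need \emph{can} be obtained (e.g.\ by passing a uniform Poincar\'e inequality for $\nu_n$ to the limit, or from the general results on log-concave measures in \cite{asz}), but it is indeed the step that needs the most work in your version, whereas in the paper's proof nothing comparable is required.
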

\begin{proof}
It is well known that $\nu_n$ is an invariant probability measure of
the Markov semigroup $(P_t^n)_{t\geq 0}$. The weak convergence 
of $\nu_n$ to $\nu$, the convergence formula \eqref{convsemi} of $P^n_t$
to $P_t$ and the Strong Feller property, uniform in $n$, of $P^n_t$
allow to show that $\nu$ is invariant for $(P_t^n)_{t\geq 0}$.

To prove uniqueness, we use a coupling argument. Let 
$m^1$ and $m^2$ be two invariant probability measures for $(P_t)_{t\geq 0}$ 
and let $q_1$ and $q_2$ be $K$-valued random
variables, such that the law of $q^i$ is $m^i$ and 
$\{q^1,q^2,W\}$ is an independent family. Let $u^n_i$ the solution of 
equation \eqref{appro} with $u^n_i(0,\cdot)=q_i$, $i=1,2$.
Setting $v:=u^n_1(t,\cdot)-u^n_2(t,\cdot)$, 
we have:
\[
\frac d{dt} \|v\|^2 \, = -\|v'\|^2-\langle u^n_1(t,\cdot)-
u^n_2(t,\cdot), \partial \Phi_n(u^n_1(t,\cdot))-
\partial \Phi_n(u^n_2(t,\cdot)) \rangle \leq  -\pi^2\|v\|^2 
\]
since $\langle p-q, \partial \Phi_n(p)-
\partial \Phi_n(q) \rangle \geq 0$ by convexity of $\Phi_n$.
Therefore for all $n$
\[
\|u^n_1(t,\cdot)-u^n_2(t,\cdot)\| \, \leq \,
e^{-\pi^2 t/2}\|q_1-q_2\|, \quad \forall t\geq 0.
\]
Passing to the limit as $n\to\infty$, we obtain by Theorem \ref{stability} 
that $(u^n_1,u^n_2,W)$ converges in law as $n\to\infty$
to $(u_1,u_2,W)$, where $(u_i,W)$ is a weak solution of \eqref{1}  
with $u_i(0,\cdot)=q_i$, $i=1,2$. Then we obtain that
\[
\|u_1(t,\cdot)-u_2(t,\cdot)\| \, \leq \,
e^{-\pi^2 t/2}\|q_1-q_2\|, \quad \forall t\geq 0.
\]
Since the law of $u_i(t,\cdot)$ is equal to $m^i$ for all $t\geq 0$,
this implies $m^1=m^2$. 
\end{proof}

\section{Continuity properties of $X$}

From the general theory of \cite{asz} and Theorem \ref{exmarkov} above,
one obtains only relatively mild continuity path properties
of $X$, namely continuity in $t$ with values in $L^2(0,1)$. 
However, for the contact condition \eqref{contact} to make sense,
we need $u(t,\cdot)=X_t$ to be jointly continuous, since we need to
evaluate $u$ at points $(t,\theta)\in[0,T]\times[0,1]$. This is the content
of the main result of this section
\begin{prop}\label{continuity}
For any $x\in K$, there exists a modification of $u(t,\cdot)=X_t$ which is
$\bbP_x$-a.s. continuous on $]0,+\infty[\, \times[0,1]$ and such that
$\E_x[\|u_t(\cdot)-x\|^2]\to 0$ as $t\to 0$.
\end{prop}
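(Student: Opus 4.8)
The plan is to obtain the joint continuity of the candidate process $X$ by comparing it with the approximating solutions $u_n$ of \eqref{appro}, which are genuinely continuous on $]0,\infty[\,\times[0,1]$, and transferring uniform Kolmogorov-type moment estimates through the weak convergence of Theorem \ref{stability}. The key point is that the reflection and drift terms in \eqref{appro}, namely $-\frac12\partial\Phi_n(u_n)$, are \emph{dissipative}: by convexity of $\Phi_n$ one has $y\cdot\partial\Phi_n(y)\geq \Phi_n(0)\geq 0$ (after normalizing $\Phi_n\geq 0$, which holds by \eqref{Phi} since $\varphi\geq 0$), so these terms cannot worsen the standard moment bounds enjoyed by the linear stochastic heat equation. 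Consequently one expects to prove estimates of the form
\begin{equation}\label{kolm}
\sup_n\,\E^n_{\nu_n}\left[|X_t(\theta)-X_s(\theta')|^{2p}\right]\leq C_p\,\big(|t-s|^{\gamma}+|\theta-\theta'|^{\gamma}\big)
\end{equation}
for suitable $\gamma>0$ and all $p\geq 1$, uniformly in $n$, on every region $[\varepsilon,T]\times[0,1]$, and then pass to the limit.

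First I would recall the mild formulation of \eqref{appro}: writing $(p_t)_{t\geq0}$ for the heat semigroup on $[0,1]$ with the given Dirichlet-type boundary data, one has
\[
u_n(t,\cdot)=p_{t}x+\int_0^t p_{t-s}\,dW_s-\frac12\int_0^t p_{t-s}\,\partial\Phi_n(u_n(s,\cdot))\,ds,
\]
and I would bound the three terms separately. The stochastic convolution term satisfies the classical factorization/Kolmogorov estimates (as in \cite{dpz2}) that give \eqref{kolm}-type bounds for the linear part with constants independent of $n$. For the drift term, I would test against $u_n$ itself to get the a priori bound $\E^n\big[\int_0^T\!\!\int_0^1|\partial\Phi_n(u_n(s,\theta))|^2\,d\theta\,ds\big]\leq C(T)$ uniformly in $n$ by the dissipativity inequality above together with the energy identity; then smoothing by $p_{t-s}$ turns this $L^2$-in-time-space control into Hölder bounds in $(t,\theta)$ for the drift contribution. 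Starting the process from $\nu_n$ (the stationary case) keeps all constants genuinely uniform in $n$; the general initial condition $x\in K$ is then handled by the absolute continuity statement of Proposition \ref{absocon} — for $t\geq\varepsilon$, $p_t(x,\cdot)\ll\nu$, which lets one bootstrap joint continuity on $[\varepsilon,T]\times[0,1]$ from the stationary regime, using that $\varepsilon>0$ is arbitrary.

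Once \eqref{kolm} is established uniformly in $n$, the weak convergence $\bbP^n_{\nu_n}\to\bbP_\nu$ (and $\bbP^n_{x_n}\to\bbP_x$) from Theorem \ref{stability}, combined with the uniform moment bounds, shows that the limit law $\bbP_\nu$ (resp.\ $\bbP_x$) is supported on paths that satisfy the same Kolmogorov estimate; by the Kolmogorov–Chentsov criterion there is then a modification of $X$ that is a.s.\ jointly continuous (indeed locally Hölder) on $]0,\infty[\,\times[0,1]$. The statement $\E_x[\|X_t-x\|^2]\to0$ as $t\to0$ is already part of Theorem \ref{exmarkov}(3), so nothing new is needed there; one only has to check this modification is still a version of the original $L^2$-continuous process, which follows since finite-dimensional distributions are unchanged.

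The main obstacle I anticipate is controlling the nonlinear drift term uniformly in $n$: \emph{a priori} $\partial\Phi_n(u_n)$ only satisfies an $L^2$ bound (and near the boundary, assumption \eqref{assumphi}/\eqref{assumphi2} is exactly what keeps this finite in the limit), which is not immediately strong enough to yield pointwise Hölder continuity of $\int_0^t p_{t-s}\partial\Phi_n(u_n(s,\cdot))\,ds$. The trick will be to exploit the smoothing of the heat kernel quantitatively — $\|p_r\|_{L^2\to C^\alpha}\lesssim r^{-\beta}$ for appropriate exponents — and to balance the singularity at $s\to t$ against the integrability of $|t-s|^{-\beta}$, which works precisely because in one spatial dimension the heat kernel is regularizing enough. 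A secondary subtlety is that convergence in Theorem \ref{stability} is only in $C([\varepsilon,T];H_w)$ with the \emph{weak} topology on $H$, so one must be careful that the uniform tightness coming from \eqref{kolm} upgrades this to convergence compatible with joint continuity; this is routine once the moment bounds are in hand.
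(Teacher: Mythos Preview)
Your proposal contains a genuine gap at its core: the uniform $L^2$ estimate
\[
\sup_n \ \E^n_{\nu_n}\!\left[\int_0^T\!\!\int_0^1|\partial\Phi_n(u_n(s,\theta))|^2\,d\theta\,ds\right]\leq C(T)
\]
simply does not hold. The function $\Phi$ in \eqref{Phi} equals $+\infty$ off $\overline O$, so $\partial\Phi_n$ contains not only the $\varphi$-drift (which \emph{is} controlled by \eqref{assumphi}) but also the penalization $2n\,d(\cdot,\overline O)$ enforcing confinement. In the limit this penalization converges to the singular reflection measure $\eta$, supported on the Lebesgue-null contact set; if its approximants were uniformly bounded in $L^2(dt\,d\theta)$, a weak limit would lie in $L^2$, contradicting singularity. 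Already for reflected Brownian motion on an interval the analogous stationary second moment $\E[n^2(X^n_-)^2]$ blows up like $\sqrt n$. Testing the equation against $u_n$ yields $\langle u_n,\partial\Phi_n(u_n)\rangle\geq 0$, hence a bound on $\|u_n\|_H$, but not on $\|\partial\Phi_n(u_n)\|_{L^2}$; no "energy identity" recovers the latter uniformly. Since the whole Kolmogorov scheme you outline for the drift integral $\int_0^t p_{t-s}\partial\Phi_n(u_n(s,\cdot))\,ds$ rests on this $L^2$ bound, the argument does not close.

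The paper circumvents the drift entirely. Working under the stationary law $\bbP^n_{\nu_n}$, it uses the Lyons--Zheng decomposition for reversible processes: for each $h\in H$,
\[
\langle h,X^n_t-X^n_0\rangle=\tfrac12 M_t-\tfrac12(N_T-N_{T-t}),
\]
with $M,N$ martingales of quadratic variation $t\|h\|^2$; the drift (however singular) is absorbed into the backward martingale. This gives a uniform $H^{-1}$-increment bound $\|X^n_t-X^n_s\|_{H^{-1}}\lesssim|t-s|^{1/2}$ in $L^p$. Spatial regularity comes from stationarity: at each fixed time the law is $\nu_n\leq Z^{-1}\mu$, and the Brownian bridge law $\mu$ has uniform $W^{\eta,r}$-moments for $\eta<1/2$. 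Interpolating the weak temporal bound with the strong spatial one yields a $C^\beta([0,1])$-valued H\"older estimate in time, hence tightness of $(\bbP^n_{\nu_n})_n$ in $C([0,T]\times[0,1])$. The passage from $\bbP_\nu$ to $\bbP_x$ via \eqref{eq:absocon}, and the initial $L^2$-continuity from Theorem~\ref{exmarkov}(3), are then exactly as you sketch.
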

We start by proving continuity of stationary solutions of \eqref{1}.
To this aim,
we are going to use the approximating equations \eqref{appro} and the
convergence result of Theorem \ref{stability}-(c) for the stationary solutions.
In particular, we are going to prove tightness of $(\bbP_{\nu_n}^n)_n$
in $C([0,1]\times [0,T])$.
\begin{lemma}\label{tight}
The sequence $(\bbP_{\nu_n}^n)_n$ is tight in $C([0,1]\times [0,T])$.
\end{lemma}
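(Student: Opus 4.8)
The plan is to prove tightness in $C([0,1]\times[0,T])$ via the Kolmogorov–Chentsov criterion, by establishing moment bounds on increments of $u_n(t,\theta)$ that are uniform in $n$. First I would decompose the solution $u_n$ of \eqref{appro} as $u_n = v_n + z_n + w_n$, where $z_n$ is the solution of the deterministic heat equation with the correct boundary data $a,b$ and initial datum $x$, $w_n$ is the stochastic convolution (the solution of the linear stochastic heat equation with zero initial/boundary data), and $v_n$ is the remainder, which solves a deterministic PDE driven by the nonlinear term $-\tfrac12\partial\Phi_n(u_n)$. The point $z_n$ is a fixed smooth function with no $n$-dependence, and the stochastic convolution $w_n = w$ does not depend on $n$ either and is well known to be a.s. $\alpha$-Hölder in $(t,\theta)$ jointly for any $\alpha<1/4$, with all moments finite; so tightness of these two pieces is immediate (indeed they are deterministic or $n$-independent).

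The heart of the matter is to control $v_n$ uniformly in $n$. Writing $v_n(t) = -\tfrac12\int_0^t G_{t-s}\,\partial\Phi_n(u_n(s,\cdot))\,ds$ in terms of the Dirichlet heat kernel $G$ on $[0,1]$, and using that $\partial\Phi_n$ is monotone with $\partial\Phi_n(y)\cdot(y-p)\ge 0$ for $p\in O$, standard energy estimates give an $L^2$-in-space bound. But for joint Hölder continuity I would instead aim for a bound of the form
\[
\sup_n \E\left[\int_0^T \|\partial\Phi_n(u_n(s,\cdot))\|_{L^2(0,1)}^2\,ds\right] < +\infty,
\]
which is exactly the kind of estimate that Remark \ref{nonoptimal} and the forthcoming Lemma \ref{nuphi} provide: under the stationary measure $\nu_n$, $\E_{\nu_n}[\,\|\partial\Phi_n(u_n)\|_{L^2}^2\,]$ is controlled by $\int\int|\partial_0\varphi(x_\theta)|^2\,d\theta\,\nu_n(dx)$, which is bounded uniformly in $n$ thanks to assumption \eqref{assumphi} (Lemma \ref{yosid}(3) gives $|\partial\Phi_n|\uparrow|\partial_0\Phi|$, and $\nu_n\rightharpoonup\nu$). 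Since we are working with the stationary initial law $\nu_n$, the time-integrated bound follows by integrating the spatial bound over $[0,T]$. With this $L^2([0,T]\times[0,1])$ control on the forcing term in hand, the smoothing properties of the heat semigroup $G$ (namely $\|G_t f\|_{H^\beta}\lesssim t^{-\beta/2+\ldots}\|f\|_{L^2}$ together with factorization/Sobolev embedding in the style of Da Prato–Zabczyk) upgrade $v_n$ to a function that is, uniformly in $n$, bounded in $C^\gamma([0,T]\times[0,1])$ in expectation for some $\gamma>0$; more precisely I would bound $\E[\,|v_n(t,\theta)-v_n(t',\theta')|^p\,]\le C(|t-t'|+|\theta-\theta'|^2)^{\kappa p}$ with $C,\kappa$ independent of $n$.

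Combining the three pieces, $\E[\,|u_n(t,\theta)-u_n(t',\theta')|^p\,]\le C(|t-t'|^{1/2}+|\theta-\theta'|)^{\kappa' p}$ uniformly in $n$ for suitable $p$ large and $\kappa'>0$; together with uniform control of $\E[\,|u_n(0,\theta)|^p\,]$ (which is just $|x(\theta)|^p$, $x$ being fixed continuous) this yields, by the Kolmogorov–Chentsov tightness criterion on $C([0,1]\times[0,T])$, that $(\bbP^n_{\nu_n})_n$ is tight there. The main obstacle is precisely the uniform-in-$n$ estimate on the nonlinear term $\partial\Phi_n(u_n)$: one cannot afford to lose a sign or a constant as $n\to\infty$, so the argument must go through the stationary measure $\nu_n$ and invoke the integration-by-parts/finiteness input (assumption \eqref{assumphi}, equivalently \eqref{assumphi2}) rather than any pathwise cancellation — this is the step where the log-concavity and the explicit form of $\nu_n$ are essential, and it is also why we prove tightness only for the stationary solutions here, deferring the general initial condition to Proposition \ref{continuity} via the Strong Feller property.
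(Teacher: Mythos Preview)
Your strategy is genuinely different from the paper's, and its central step has a real gap. The uniform bound
\[
\sup_n \ \E_{\nu_n}\Bigl[\int_0^1 |\partial\Phi_n(x_\theta)|^2\,d\theta\Bigr] < \infty
\]
is not delivered by the argument you sketch. The inequality $|\partial\Phi_n|\le|\partial_0\Phi|$ from Lemma~\ref{yosid}(3) is valid only on $D(\partial\Phi)\subset\overline O$, and Lemma~\ref{nuphi} concerns $\nu$, which is supported on $K$. But $\nu_n$ is \emph{not} supported on $K$: under $\nu_n$ the path leaves $\overline O$ with positive probability, and for $y\notin\overline O$ one has $|\partial\Phi_n(y)|\ge 2n\,d(y,\overline O)$, which diverges with $n$. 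Whether the Gibbs weight $e^{-U_n}$ decays fast enough to compensate this growth is a delicate balance that neither assumption~\eqref{assumphi} nor Lemma~\ref{nuphi} addresses; in the model case $\varphi\equiv 0$ it amounts to showing $n^2\,\E_{\nu_n}[\int_0^1 d^2(x_\theta,\overline O)\,d\theta]$ stays bounded, which is a nontrivial small-excursion estimate for the Brownian bridge. Moreover, even if the $L^2$ bound held, the Kolmogorov criterion in two parameters requires moments of order $p>4$ of $\|\partial\Phi_n(u_n)\|_{L^2}$, which your second-moment estimate alone does not give.

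The paper sidesteps any estimate on the drift. Time regularity comes from the Lyons--Zheng decomposition: using reversibility of the stationary process, one writes $\langle h,X^n_t-X^n_0\rangle$ as a sum of a forward and a (time-reversed) backward martingale, and the quadratic variations are $t\|h\|^2$, \emph{independent of $n$ and of the nonlinearity}. This yields uniform $L^p$ bounds on increments in $H^{-1}(0,1)$. Spatial regularity at fixed times comes from the trivial domination $\nu_n\le Z^{-1}\mu$ (since $U_n\ge 0$), which transfers the $W^{\eta,r}$ moments of the Brownian bridge to $\nu_n$ uniformly in $n$. Interpolating between the $H^{-1}$ time estimate and the $W^{\eta,r}$ space estimate gives Kolmogorov-type bounds in $C^\beta([0,1])$, hence tightness. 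The point is that reversibility makes the drift disappear from the time-increment estimate, so one never needs to control $\partial\Phi_n(u_n)$ at all.
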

\begin{proof} We follow the proof of Lemma 5.2 in \cite{deza}.
We first recall a result of \cite[Th. 7.2 ch 3]{ek}.
Let $(P,d)$ be a Polish space, and let $(X_\alpha)_\alpha$ be a family of processes with sample paths in $C([0,T];P)$. Then the laws of $(X_\alpha)_\alpha$ are relatively compact if and only if the following two conditions hold:
\begin{enumerate}
\item For every $\eta >0$ and rational $t\in[0,T]$, there is a compact set $\Gamma_\eta^t \subset P$ such that:
\begin{equation}\label{tension0}
\inf\limits_\alpha \bbP\left( X_\alpha \in \Gamma_\eta^t\right) \geq 1-\eta
\end{equation} 
\item For every $\eta,\epsilon >0$ and $T>0$, there is $\delta>0$ such that 
\begin{equation}\label{tension}
\sup\limits_\alpha \bbP\left( w(X_\alpha, \delta, T)\geq \epsilon \right) \leq \eta 
\end{equation} 
 \end{enumerate}
where $w(\omega, \delta, T):=\sup\{d(\omega(r),\omega(s)): r,s\in[0,T],
\, |r-s|\leq\delta\}$ is the modulus of continuity in $C([0,T];P)$.

\medskip\noindent We introduce the space $H^{-1}(0,1)$, 
completion of $L^2(0,1)$ w.r.t. the norm:
\[
\|f\|_{-1}^2 \, := \, \sum_{k=1}^\infty k^{-2} \,
|\langle f, e_k \rangle_{L^{2}(0,1)}|^2
\]
where $e_k(r):={\sqrt 2}\sin(\pi kr)$, $r\in[0,1]$, $k\geq 1$, are the eigenvectors of the second derivative with
homogeneous Dirichlet boundary conditions at $\{0,1\}$. Recall that $L^{2}(0,1)=H$, in
our notation.
We denote by $\kappa$ the Hilbert-Schmidt norm of
the inclusion $H \to H^{-1}(0,1)$, which by definition is equal in our case to
\[
\kappa = \sum_{k\geq 1} k^{-2}<+\infty.
\]
We claim that for all $p>1$ there exists $C_p\in(0,\infty)$, independent of
$n$, such that:
\begin{equation}\label{estim}
\left( {\mathbb E}\left[
\left\|X^{n}_t- X^{n}_s\right\|^p_{H^{-1}(0,1)} \right]
\right)^{\frac1p} \, \leq \, C_p \, |t-s|^{\frac12}, \qquad
t,s\geq 0.
\end{equation}
To prove (\ref{estim}),
we fix $n>0$ and $T>0$ and use the Lyons-Zheng
decomposition, see e.g. \cite[Th. 5.7.1]{fot}, to write for
$t\in[0,T]$ and $h\in H$:
\[
\langle h,X^{n}_t - X^{n}_0\rangle_H
\, = \, \frac 12 \,
M_t \, - \, \frac 12 \, (N_T  - N_{T-t}), 
\]
where $M$, respectively $N$, is a martingale w.r.t. the natural
filtration of $X^{n}$, respectively of $(X^{n}_{T-t},
\ t\in[0,T])$.
Moreover, the quadratic variations are both equal to:
$\langle M\rangle_t =\langle N\rangle_t =t \cdot \|h\|^2_H$. By the
Burkholder-Davis-Gundy inequality we can find $c_p\in(0,\infty)$
for all $p>1$ such that: $\left({\mathbb E}\left[|\langle X^{n}_t-
X^{n}_s, e_k \rangle|^p \right]\right)^{\frac1p} \leq c_p \,|t-s|^{\frac12}$,
$t,s\in[0,T]$, and therefore
\[
\begin{split}
& \left({\mathbb E}\left[\left\|X^{n}_t-
X^{n}_s\right\|^p_{H^{-1}(0,1)} \right]
\right)^{\frac1p}  \leq  \sum_{k\geq 1} k^{-2}\left({\mathbb E}\left[|\langle X^{n}_t-
X^{n}_s, e_k \rangle|^p \right]
\right)^{\frac1p} \\
  & \leq  c_p \sum_{k\geq 1} k^{-2} |t-s|^{\frac12} \|e_k \|^{2}_{L^{2}(0,1)}
  \leq  c_p \, \kappa \,|t-s|^{\frac12}, \quad t,s\in[0,T],
\end{split}
\]
and (\ref{estim}) is proved. 
Let us introduce now the norm $\|\cdot\|_{W^{\eta,r}(0,1)}$ for $\eta>0$, $r\geq 1$
\[
\|x\|_{W^{\eta,r}(0,1)}^r = \int_0^1 |x_s|^r ds + \int_0^1\int_0^1 \dfrac{|x_s - x_t|^r }{|s-t|^{r\eta+1}} \, dt \, ds.
\]
By stationarity 
\begin{align}\label{estim2}
\nonumber
& \left( {\mathbb E}\left[\left\|X^{n}_t-
 X^{n}_s\right\|^p_{W^{\eta,r}(0,1)} 
\right]\right)^{\frac1p}
\leq  \left( {\mathbb E}\left[\left\|
 X^{n}_t\right\|^p_{W^{\eta,r}(0,1)} \right] 
\right)^{\frac1p}+\left( {\mathbb E}\left[\left\|
 X^{n}_s\right\|^p_{W^{\eta,r}(0,1)} \right]
\right)^{\frac1p}
\\ \nonumber \\  & \qquad
= 2\left( \int_H \| x\|^p_{W^{\eta,r}(0,1)} \, d\nu_n \right)^{\frac1p}
\, \leq \, c \left( \int_H \| x\|^p_{W^{\eta,r}(0,1)} \,d\mu \right)^{\frac1p}
\end{align}
since $U\geq U_n\geq 0$, where $c=Z^{-1/p}$. 
If $r>p\geq 1$ the Jensen inequality for a concave function gives us,
for  $\eta\in(0,1/2)$,
\[
\begin{split}
\left( \E\left( \| \beta\|^p_{W^{\eta,r}(0,1)} \right) \right)^{\frac{r}{p}} &  \leq  \E  \left(\|\beta\|_r^p + \int_0^1\int_0^1 \dfrac{|\beta_s - \beta_t|^r }{|s-t|^{r\eta+1}} \,dt\,ds\right)
\\ & \leq 1+ \int_0^1\int_0^1 {|s-t|^{r(\frac12-\eta)-1}} \,dt\,ds <  +\infty.
\end{split}
\]
The latter term is finite since $\mu$ is the law of a Brownian bridge. Let us now 
fix any $\eta\in(0,1/2)$ and $\gamma\in(0,1)$ such that 
\[
\gamma>\frac1{1+\frac23\eta}.
\]
From this it follows that $\alpha:=\gamma\eta-(1-\gamma)>0$ and 
therefore, if $r>0$ is such that
\[
r>\max\left\{\frac2{1-\gamma}, \frac1{\eta-\frac32\frac{1-\gamma}\gamma}
\right\},
\]
then we obtain that
\[
\frac{r}2 \, (1-\gamma)>1, \qquad \frac 1d:=\gamma \frac 1r + (1-\gamma)\frac12
<\alpha.
\]
Then by interpolation, see \cite[Chapter 7]{af},
\begin{eqnarray*}
& &
\left( {\mathbb E}\left[\left\| X^{n}_t-
X^{n}_s\right\|^p_{W^{\ga,d}(0,1)} \right]
\right)^{\frac1p}\leq 
\\ \\ & & \leq \,
\left( {\mathbb E}\left[\left\| X^{n}_t-
X^{n}_s\right\|^p_{W^{\eta,r}(0,1)} \right]
\right)^{\frac \gamma p}\left( {\mathbb E}\left[\left\|
X^{n}_t- X^{n}_s\right\|^p_{H^{-1}(0,1)}
\right] \right)^{\frac{1-\gamma}p}.
\end{eqnarray*}
Since $\alpha d>1$, there exists
$\beta>0$ such that $(\ga-\gb)d>1$. By the Sobolev embedding,
$C^{\gb}([0,1])\subset W^{\ga,d}(0,1)$. Since $\frac{r}2 \, (1-\gamma)>1$,
there is $1<p<r$ such that $\frac{p}2 \, (1-\gamma)=1+\zeta>1$,
and by (\ref{estim}) and (\ref{estim2}), we find that
\[
\left( {\mathbb E}\left[\left\| X^{n}_t-
X^{n}_s\right\|^p_{C^{\gb}([0,1])} \right]
\right) \, \leq \, {\tilde c} \,
|t-s|^{\frac{1-\gamma}{2}p} .
\]
We consider now, as Polish space $(P,d)$, the Banach space
$C^{\gb}([0,1])$. By Kolmogorov's criterion, see e.g. \cite[Thm. I.2.1]{reyo},
we obtain that a.s. $w(X^{n},\delta, T)\leq C \, \delta^{\frac{\zeta}{2p}}$, with $C\in L^p$. Therefore by the Markov inequality, if $\epsilon>0$
\[
\bbP \left(w(X^{n},\delta, T)\geq \epsilon \right)  \leq \bbE \left[ {C^{p}} \right]
\, \delta^{\frac\zeta2}\epsilon^{-p},
\]
and \eqref{tension} follows for $\delta$ small enough.

Finally, since for all $t\geq 0$ the law of $X^{n}_t$ is
$\nu_n$, which converges as $n\to \infty$ weakly in $C([0,1])$,
tightness of the laws of $(X^{n})_{n>0}$
in $C([0,T]\times [0,1])$ and therefore \eqref{tension0} follow.
\end{proof}

\begin{proof}[Proof of Proposition \ref{continuity}]
By Theorem \ref{stability}, we have $\bbP^n_{\nu_n}\rightharpoonup \bbP_\nu$
in $C_b([0,T];H_w)$ and by Lemma \ref{tight} the sequence $(\bbP^n_{\nu_n})_n$
is tight in $C([0,T]\times[0,1])$, so that $\bbP_\nu(C([0,T]\times[0,1]))=1$.
Now, we want to prove that $\bbP_x(C(]0,T]\times[0,1]))=1$ for all $x\in K$.
Let $\gep>0$. By \eqref{eq:absocon}, $\bbP_x\ll \bbP_\nu$ over the $\sigma$-algebra
$\sigma\{X_s, s\geq\gep\}$. Therefore $\bbP_x(C([\gep,T]\times[0,1]))=1$ for
all $\gep>0$.
\end{proof}

\section{An integration by parts formula}

An important tool in the construction of a solution to equation \eqref{1}
is the following integration by parts formula on the law
$\mu$ of the Brownian bridge on the set $K$ of trajectories contained in $\overline O$,
proved in \cite[Theorem 1.1]{hari}: 
\begin{equation}\label{hariya}
\int_K \partial_h F \, d\mu = -\int_K \langle h'',x\rangle F\, d\mu -
\int_{\partial O}\sigma(dy) \, \E_{a,y,b}\left[n(y)\cdot h(S_w)\, F(w)\right] \, \lambda(y)
\end{equation}
where $F\in \mathcal{FC}^1$ and
\begin{enumerate}
\item $h$ is in the Cameron-Martin space  of $\mu$ 
\[
H_0^1=\left\{h \in C^0\, | \, h_0=h_1=0, \ h_t=\int_0^t \dot{h}_s \, ds, \ \dot{h}\in L^2(0,1)\right\}
\]
\item 
$\bbP_{a,y,b}$ is the law of two independent Brownian motions put together back to back at their first exit time of $O$, across $y$. More precisely, let $B$ and $\hat{B}$ be two independent Brownian motion such that $B_0 = a$ and $\hat{B}_0 = b$. Let $\tau(B)$ and $\tau(\hat{B})$ be the first exit times from $O$ of $B$ and $\hat{B}$ respectively. Conditionally on  $\tau(B)+\tau(\hat{B})=1$, $B_{\tau(B)}=y$ and $\hat{B}_{\tau(\hat{B})}= y$, define the process $X$ by
\[
X_t = \left\{ \begin{array}{ll}
B_t & 0\leq t \leq \tau(B)\\
\hat{B}_{\tau(B)+\tau(\hat{B}) - t}, & \tau(B)\leq t \leq \tau(B)+\tau(\hat{B})
\end{array}
\right.
\]  
Then $X$ has the law $\bbP_{a,y,b}$. For $w\in C([0,1];\overline O)$ we denote by $S_w$ the first time at which $w_{S_w}\in\partial O$, if there is any:
\[
S_w:=\inf\{s\in[0,1]: \, w_s\in\partial O\}, \qquad \inf\emptyset:=0.
\]
Then $w_{S_w}=y$ for $\bbP_{a,y,b}$-a.e. $w$.
\item $\sigma$ is the surface measure on $\partial O$, $n_y$ is the inward normal vector
\item $(p_t(x,y))_{t>0,x,y\in O}$ is the fundamental solution to the Cauchy problem
\[
\frac\partial{\partial t} - \dfrac{1}{2}\Delta = 0
\]
where $\Delta$ is the Laplace operator on $O$ with homogeneous Dirichlet boundary conditions at $\partial O$, and
\[
 \lambda(y):=  \frac1{2p_{1}(a,b)}
\int_0^1\dfrac{\partial}{\partial n_y}\, p_u(a,y)\, 
\dfrac{\partial}{\partial n_y}\, p_{1-u}(b,y) \,  du, \qquad y\in\partial O.
\]
\end{enumerate}
To prove \eqref{hariya}, the author of \cite{hari} assumes that $\partial\Omega$
is smooth, and in particular that
\begin{enumerate}
\item for each $t>0$ and $y\in O$, $p_t(\cdot,y)$ is $C^1$ up to the boundary
\item the restriction to $\partial O$ of harmonic functions on $O$, and $C^1$ up to the boundary, are dense in $C(\partial O)$
\end{enumerate}
see \cite[Remarks 1.1 and 1.2]{hari}.
Under this assumption the law of $(\tau(B),B_{\tau (B)})$ is given for
$a\in O$ by 
\[
\bbP_a(\tau(B)\in dt,B_{\tau (B)} \in dy) = \dfrac{1}{2}\, \dfrac{\partial}{\partial n_y}\, p_t(a,y) \, \sigma(dy) \, dt,
\]
where $\partial/\partial n_y$ denotes the normal derivative at $y \in \partial O$,
see \cite[formula (1.4)]{hari}. 

We want to deduce from \eqref{hariya} the following integration by parts formula for $\nu$.
We set for all bounded Borel $F:H\mapsto\R$
\begin{equation}\label{Sigma}
\int F(w)\, \Sigma(y,dw) := \frac1Z
 \, \E_{a,y,b}\left[F(w) \, e^{-U(w)}\right] \, \lambda(y).
\end{equation}
\begin{prop}
For all $F \in \mathcal{FC}^1$ 
\begin{equation}\label{ibpnu}
\begin{split}
\int \partial_h F \, d\nu = & -\int \langle h'',x\rangle \, F\, d\nu +
\int \langle h,\partial_0\varphi\rangle F\, d\nu 
\\ & - \int_{\partial O}\sigma(dy)\int n(y)\cdot h(S_w)\, F(w)\ \Sigma(y,dw)
\end{split}
\end{equation}
\end{prop}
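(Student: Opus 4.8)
The goal is to pass from the integration-by-parts formula \eqref{hariya} for the Brownian-bridge law $\mu$ on $K$ to the formula \eqref{ibpnu} for the weighted measure $\nu(dx)=\frac1Z e^{-U(x)}\mu(dx)$. The natural strategy is to apply \eqref{hariya} not to $F$ itself but to the product $F\,e^{-U_n}$, where $U_n$ is the smooth (Yosida-regularized) potential of \eqref{U_n}, use the product rule for the directional derivative $\partial_h$, and then let $n\to\infty$. Since $U_n$ is built from the $n$-Lipschitz function $\Phi_n$, the composition $w\mapsto e^{-U_n(w)}$ is Fréchet differentiable along $H_0^1$ with $\partial_h\big(e^{-U_n(w)}\big)=-\,e^{-U_n(w)}\,\langle h,\partial_0\varphi_n(w)\rangle$ where $\partial_0\varphi_n$ denotes the pointwise $\theta$-derivative; so $F e^{-U_n}$ is not in $\mathcal{FC}^1$ but it is a nice enough cylinder-type functional that \eqref{hariya} applies to it (either by a direct approximation of $e^{-U_n}$ by $\mathcal{FC}^1$ functions, or because \eqref{hariya} is known to hold for all $F$ in the domain of the form, which contains $F e^{-U_n}$).

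First I would write, for fixed $n$,
\[
\int_K \partial_h\big(F\,e^{-U_n}\big)\, d\mu
= -\int_K \langle h'',x\rangle\, F\,e^{-U_n}\, d\mu
- \int_{\partial O}\sigma(dy)\,\E_{a,y,b}\!\left[n(y)\cdot h(S_w)\,F(w)\,e^{-U_n(w)}\right]\lambda(y),
\]
and expand the left-hand side as $\int_K \big(\partial_h F\big)e^{-U_n}\,d\mu - \int_K F\,\langle h,\partial_0\varphi_n\rangle\,e^{-U_n}\,d\mu$. Multiplying through by $1/Z$ (not $1/Z_n$; I keep the same normalizing constant $Z=\mu(e^{-U})$ that appears in the definition of $\nu$ and of $\Sigma$ in \eqref{Sigma}, absorbing the discrepancy $Z_n/Z$ into the passage to the limit) gives an identity that, term by term, is an approximate version of \eqref{ibpnu} with $e^{-U}$ replaced by $e^{-U_n}$ and $\partial_0\varphi$ replaced by $\partial_0\varphi_n$.

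Then I would pass to the limit $n\to\infty$ in each of the four resulting integrals. Since $0\le U_n\uparrow U$ pointwise (Lemma \ref{yosid}(2) applied to $\Phi$, integrated over $\theta$, using that $U_n=+\infty$ is never an obstruction because $U_n<+\infty$ everywhere and $U_n\le U$), we have $e^{-U_n}\downarrow e^{-U}$, $e^{-U_n}\le 1$, so by dominated convergence $\int_K (\partial_h F) e^{-U_n}\,d\mu\to Z\int \partial_h F\, d\nu$, $\int_K \langle h'',x\rangle F e^{-U_n}\,d\mu \to Z\int\langle h'',x\rangle F\,d\nu$, and $\int_{\partial O}\sigma(dy)\,\E_{a,y,b}[\,\cdots e^{-U_n}]\lambda(y)\to \int_{\partial O}\sigma(dy)\int n(y)\cdot h(S_w)F(w)\,Z\,\Sigma(y,dw)$ — the last using the definition \eqref{Sigma} and monotone/dominated convergence under the finite measure $\sigma(dy)\,\E_{a,y,b}[\,\cdot\,]\,\lambda(y)$, noting $\lambda\in L^1(\sigma)$ and $p_1(a,b)>0$, as recorded in \eqref{hariya}. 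The term $Z_n/Z\to 1$ since $\mu(e^{-U_n})\to\mu(e^{-U})=Z$ by dominated convergence.

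The main obstacle is the nonlinear term $\int_K F\,\langle h,\partial_0\varphi_n\rangle\,e^{-U_n}\,d\mu$, which must converge to $Z\int \langle h,\partial_0\varphi\rangle F\,d\nu$. Here one must control $\partial_0\varphi_n(x_\theta)=\partial\Phi_n(x_\theta)$, which by Lemma \ref{yosid}(3) converges pointwise to $\partial_0\varphi(x_\theta)$ with $|\partial\Phi_n|\uparrow|\partial_0\varphi|$ — but only at points of $D(\partial\Phi)\subset O$, and $\partial_0\varphi$ may blow up near $\partial O$. The key point is that under $\nu$ (equivalently, under $e^{-U_n}d\mu$ uniformly in $n$, since $e^{-U_n}\le 1$ and $U_n\to U$) the trajectory $x_\theta$ lies in $O$ for a.e. $\theta$ and the relevant integrability holds: one uses assumption \eqref{assumphi} (or the sharper \eqref{assumphi2}), the Fubini-type bound $\int\!\int_0^1 |\partial_0\varphi(x_\theta)|^2\,d\theta\,\nu(dx)<\infty$ — which is exactly the content of the forthcoming Lemma \ref{nuphi} — together with $|\partial\Phi_n(x_\theta)|\le|\partial_0\varphi(x_\theta)|$ and $e^{-U_n}\le 1$, $\|h\|_\infty<\infty$, $\|F\|_\infty<\infty$, to dominate the integrand uniformly in $n$ and invoke dominated convergence. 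One should first argue that for $\mu$-a.e.\ $x$ with $U(x)<\infty$ one has $x_\theta\in O$ for a.e.\ $\theta$, so that Lemma \ref{yosid}(3) applies $d\theta\,d\mu$-a.e.; this uses the assumption \eqref{assumphi} guaranteeing $\varphi$, hence $\Phi_n$ and its subgradient, is controlled off a Lebesgue-null set of $\theta$. Combining all four limits yields \eqref{ibpnu}, and by density of $\mathcal{FC}^1$ and continuity of both sides in $F$ the formula extends to all $F\in\mathcal{FC}^1$ as stated.
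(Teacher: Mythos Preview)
Your proposal is correct and takes essentially the same route as the paper: apply \eqref{hariya} to $Fe^{-U_n}$, expand via the product rule, and pass to the limit $n\to\infty$ using Lemma~\ref{yosid} and dominated convergence. You supply more detail on the nonlinear term (via the $L^2$ bound that is Lemma~\ref{nuphi}) than the paper's terse proof does; your closing remark about extending by density is superfluous, since the argument already treats every $F\in\mathcal{FC}^1$ directly.
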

\begin{proof}
If $F \in \mathcal{FC}^{1}$, then we apply \eqref{hariya} with the function $Fe^{-U_n}$, where $U_n$ is defined
in \eqref{U_n}, and we obtain
\[
\begin{split}
\int_K \partial_h F\, e^{-U_n} \, d\mu = & -\int_K \langle h'',x\rangle F\, e^{-U_n} \, d\mu +
\int_K \langle h,\partial\Phi_n\rangle F\, e^{-U_n} \, d\mu  \\ & -\int_{\partial O}\sigma(dy) \, \E_{a,y,b}\left[n(y)\cdot h(S)\, F\, e^{-U_n}\right] \lambda(y).
\end{split}
\]
The dominated convergence theorem and Lemma \ref{yosid} provide the desired result.
\end{proof}

\section{Existence of weak solutions}\label{exweso}

By Theorem \ref{exmarkov} we have a Markov process associated with the Dirichlet form
$\cE$ defined by \eqref{e^}, but we still have to show that it is a solution
of \eqref{1}. In particular, we have the process $X$, namely the function $u$,
but not the reflection measure $\eta$. The aim of this section is to construct
$\eta$ and obtain a weak solution of equation \eqref{1}, in particular to prove
the following
\begin{prop}\label{exwe}
For all $x\in K$ there exists a weak solution $(u,\eta,W)$ of equation
\eqref{1}.
\end{prop}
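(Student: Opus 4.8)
The plan is to construct the reflection measure $\eta$ as an additive functional of the Markov process $X$ associated with the Dirichlet form $\cE$, and to identify it via the integration by parts formula \eqref{ibpnu}. First I would fix $x\in K$ and work under the stationary measure $\bbP_\nu$; by the absolute continuity property \eqref{eq:absocon} and the argument in the proof of Proposition \ref{continuity}, results proved for $\bbP_\nu$ transfer to $\bbP_x$ on the $\sigma$-algebra $\sigma\{X_s,\,s\geq\gep\}$ for every $\gep>0$. The idea is that the formula \eqref{ibpnu} has exactly the structure of an integration by parts for the Dirichlet form $\cE$, in which the last term, a surface-type integral against $\Sigma(y,dw)$, plays the role of the boundary term; by the general theory of Dirichlet forms (Fukushima decomposition, see \cite{fot}), such a boundary term corresponds to a positive continuous additive functional (PCAF) whose Revuz measure is read off from \eqref{ibpnu}. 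Concretely, for each test function $h\in C^2_c((0,1);\R^d)$ one applies \eqref{ibpnu} with $F\equiv 1$ (or suitable $F\in\mathcal{FC}^1$) and the Fukushima decomposition of $\langle h,X_t\rangle$ to write
\[
\langle h, X_t - X_\gep\rangle = \frac12\int_\gep^t \langle h'', X_s\rangle\, ds - \frac12\int_\gep^t \langle h, \partial_0\varphi(X_s)\rangle\, ds + M^h_t - M^h_\gep + A^h_t - A^h_\gep,
\]
where $M^h$ is a martingale additive functional and $A^h$ is a continuous additive functional of zero energy whose Revuz measure is the boundary term in \eqref{ibpnu}.

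The second step is to identify the martingale part $M^h$ as $\int h\, dW$ for a Brownian sheet $W$. This is standard once one computes the quadratic variation: the energy measure of $\langle h,\cdot\rangle$ under $\cE$ is $\|h\|^2\,\nu$, so $\langle M^h\rangle_t = t\,\|h\|^2_H$, and a martingale representation theorem (as in the construction of weak solutions for reflected SPDEs in \cite{za,nupa}) produces a space-time white noise $W$ on a possibly enlarged probability space with $M^h_t = \int_0^t\int_0^1 h(\theta)\,W(ds,d\theta)$. One must check that the martingale structure is consistent across different $h$, which follows from polarization of the energy form.

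The third and most delicate step is to show that $A^h_t$ is of the form $\int_\gep^t\int_0^1 h(\theta)\cdot n(u(s,\theta))\,\eta(ds,d\theta)$ for a single positive measure $\eta$ independent of $h$, supported on the contact set. The measure $\eta$ should be defined as the PCAF whose Revuz measure is $\int_{\partial O}\sigma(dy)\,\Sigma(y,dw)$, pushed to the contact set; the factor $n(u(s,\theta))$ and the pairing with $h$ come from the explicit form of the boundary term in \eqref{ibpnu}, where $h$ enters only through $n(y)\cdot h(S_w)$ and $S_w$ is the unique contact location for $\bbP_{a,y,b}$-a.e.\ $w$. The positivity of $\eta$ is automatic since $A^h$ is increasing when $h$ points in the direction of $n$; more carefully, one uses the Revuz correspondence: the Revuz measure of the boundary PCAF is a positive measure on $H$ (as $\lambda(y)\geq 0$ and $e^{-U}\geq 0$), hence $\eta\geq 0$. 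The support condition \eqref{contact} follows because $\Sigma(y,dw)$ is concentrated on $\{w: w_{S_w}=y\in\partial O\}$, so the Revuz measure charges only configurations touching $\partial O$; combined with the strong Feller/absolute continuity property this localizes the support of $\eta$ to $\{(t,\theta): u(t,\theta)\in\partial O\}$. The main obstacle is making the Fukushima decomposition bookkeeping rigorous in this infinite-dimensional, degenerate setting — in particular, verifying that the zero-energy additive functional $A^h$ is genuinely of bounded variation (a PCAF difference) rather than merely of zero quadratic variation, which requires exploiting the explicit positive form of the boundary term in \eqref{ibpnu} and the finiteness $\eta([\gep,T]\times[0,1])<\infty$ coming from $\lambda\in L^1(\sigma)$ and $\sigma(\partial O)<\infty$; the singular behaviour of $\partial_0\varphi$ near $\partial O$, controlled only by assumption \eqref{assumphi} (or \eqref{assumphi2}), must also be handled so that $\langle h,\partial_0\varphi(X_s)\rangle$ is integrable, which is where Lemma \ref{nuphi} referenced in Remark \ref{nonoptimal} enters.
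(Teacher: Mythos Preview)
Your outline follows essentially the same approach as the paper: Fukushima decomposition of $\langle h, X_t\rangle$, identification of the martingale part as a white-noise integral via the energy measure and L\'evy's characterization, and identification of the zero-energy part via the Revuz correspondence together with the integration by parts formula \eqref{ibpnu}. You also correctly flag the main obstacle, namely that $N^{[U^h]}$ must be shown to have bounded variation and not merely zero energy; the paper resolves this exactly as you suggest, by reading smoothness of the signed Revuz measure $\Sigma^h$ off \eqref{ibpnu} and invoking \cite[Theorem 5.4.2]{fot}.

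The one substantive step you gloss over is the construction of $\eta$ as a genuine measure on $]0,T]\times[0,1]$. Declaring ``$\eta$ is the PCAF whose Revuz measure is $\int_{\partial O}\sigma(dy)\,\Sigma(y,dw)$'' only produces the one-parameter process $t\mapsto\eta([0,t]\times[0,1])$; it does not give you the spatial disintegration. The paper proceeds as follows: for each nonnegative $g\in C([0,1])$ it shows that $\Gamma^g(dw):=\int_{\partial O}\sigma(dy)\,g(S_w)\,\Sigma(y,dw)$ is smooth (using the basis trick $\sum_i|n(y)\cdot e_i|\geq\kappa>0$ to dominate $\Gamma^g$ by a sum of total-variation measures $|\Sigma^{ge_i}|$, which are smooth by the bounded-variation step), obtains the associated PCAF $A^g$, checks that $g\mapsto A^g_T$ is linear, positive and $\|\cdot\|_\infty$-continuous, applies the Riesz representation theorem to get $A_T(d\theta)$, and then disintegrates $A_t\ll A_T$ via Radon--Nikodym with a c\`adl\`ag regularization in $t$ to assemble $\eta(ds,d\theta)$. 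This construction, and the subsequent verification that $\eta$ satisfies the contact condition and \eqref{decomposition2} by matching Revuz measures, is the technical heart of the proof and is not covered by the abstract Revuz correspondence alone. Finally, the extension from $K\setminus N$ to all of $K$ via \eqref{eq:absocon}, which you allude to, is done in the paper by a monotonicity argument in the shift parameter $\gep$.
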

We are going to use Fukushima's theory \cite{fot} and in particular the
powerful correspondence between positive continuous additive functionals (PCAF)
and {\it smooth} measures, i.e. positive measures which do not charge sets with zero
capacity. This theory is the content of \cite[Chapters 4 and 5]{fot}, to which we refer for all details. 

We explain now why construction of a solution of \eqref{1} is not trivial, despite
all information we already have. Since the main difficulty comes from the reflection term,
let us suppose for simplicity that $\varphi\equiv0$ and therefore,
recalling the definition \eqref{Phi} of $\Phi$, we have $\Phi\equiv 0$ on $\overline O$ 
and $\Phi\equiv +\infty$ on $\R^d\setminus\overline O$. 
Then the Yosida approximation $\Phi_n$ of $\Phi$ is equal to
\[
\Phi_n(y)=nd^2(y,\overline O):=n\inf_{z\in \overline O}\|y-z\|^2, \qquad y\in \R^d
\]
and its differential is $\partial \Phi_n(y)=2nd(y,\overline O)\frac{y-p(y)}{|y-p(y)|}$, where
$p(y)\in \overline O$ minimizes the distance from $y$, i.e. $d(y,\overline O)=\|y-p(y)\|$.
Therefore, \eqref{appro} becomes
\[
\frac{\partial u_n}{\partial t}=\frac 12
\frac{\partial^2 u_n}{\partial \theta^2} - nd(u_n,\overline O)\frac{u_n-p(u_n)}{|u_n-p(u_n)|}
 + \dot W.
 \]
 By Theorem \ref{stability}, we already know that $u_n$ converges weakly to a process $u$.
 In all papers on reflected SPDEs with real values, one uses at some point that if $O\subset\R$ is an interval, then $\frac{y-p(y)}{|y-p(y)|}$ belongs to $\{\pm 1\}$ and is therefore locally constant. In other words
 one can decompose the non-linearity 
 \[
 nd(u_n,\overline O)\frac{u_n-p(u_n)}{|u_n-p(u_n)|}= \eta^+_n-\eta^-_n
 \]
 where $\eta^+_n,\eta^-_n\geq 0$ have well separated supports by the continuity
 of $u_n$. Moreover, it is not too difficult to obtain bounds on the total variation $\eta^+_n,\eta^-_n$, which yield tightness and therefore
convergence of $\eta^+_n,\eta^-_n$ as $n\to\infty$, as has been done in a number of papers, see \cite{nupa, dopa1, eddouk, deza, 
goud, debgoud} among others.

On the other hand, if $u_n\in\R^d$, then such a decomposition becomes impossible, since
the vector $\frac{y-p(y)}{|y-p(y)|}$ varies continuously in $\bbS^{d-1}$ and the process
\[
t\mapsto L_n(t):=\int_0^t \left[nd(u_n,\overline O)\frac{u_n-p(u_n)}{|u_n-p(u_n)|}\right](s,\theta)\, ds
\]
has no definite sign. Therefore, convergence of $u_n$ yields some form of convergence of $L_n$
to a process $L$, but, without control on the total variation of $L_n$, we cannot even guarantee that
$L$ has bounded variation, a necessary condition if we want to obtain a measure $\eta$ in
equation \eqref{1}. This is the main reason why the approaches available in
the literature do not work in our case.

\subsection{Dirichlet forms and Additive Functionals}
We recall here the basics of potential theory which are needed in what follows,
referring to \cite{fot} and \cite{maro} for all proofs. By Theorem \ref{exmarkov},
the Dirichlet form $(\cE,D(\cE))$ has an associated Markov process, which is
also a Hunt process. Therefore, by \cite[Theorem IV.5.1]{maro}, the Dirichlet form
is {\it quasi-regular}, i.e. it can be embedded into a regular Dirichlet form;
in particular, the classical theory of \cite{fot} can be applied.
Moreover, the important {\it absolute continuity condition} \eqref{eq:absocon}
allows in the end to get rid of exceptional sets: see for
instance \cite[Theorem 4.1.2 and formula (4.2.9)]{fot}.

We denote by $\cF_\infty^\lambda$ (resp.  $\cF_t^\lambda$) the completion of $\cF_\infty^0$ (resp.
completion of $\cF_t^0$ in $\cF_\infty^\lambda$) with
respect to ${\mathbb P}_\lambda$ and we set $\cF_\infty:=\cap_{\lambda\in{\cP}(K)} \, \cF_\infty^\lambda$, $\cF_t:= \cap_{\lambda\in{\cP}(K)} \, \cF_t^\lambda$,
where ${\cP}(K)$ is the set of all Borel probability measures on $K$. 

\subsubsection{Capacity}
Let $A$ be an open subset of $H$, we define by $\cL_A:=\{u \in D(\cE): u \geq 1$, $\nu$-a.e. on $A\}$. Then we set
\[
{\rm Cap}(A)=\left\{ \begin{array}{ll}
\inf\limits_{u\in\cL_A}\cE_1(u,u), & \cL_A \neq \emptyset, \\
+\infty & \cL_A = \emptyset,
\end{array} \right.
\] 
where $\cE_1$ is the inner product on $D(\cE)$ defines as follow
\[
\cE_1(u,v) = \cE(u,v)+\int_H u(x)\, v(x) \, d\nu, \quad u,v\in D(\cE).
\]
For any set $A\subset H$ we let 
\[
{\rm Cap}(A) = \inf\limits_{B \ {\rm open}, A \subset B\subset H} {\rm Cap}(B)
\]
A set $N\subset H$ is {\it exceptional} if ${\rm Cap}(N)=0$.

\subsubsection{Additive functionals}
By a Continuous Additive Functional (CAF) of $X$, we mean a
family of functions $A_t:E\mapsto {\mathbb R}^+$, $t\geq 0$, such
that:
\begin{itemize}
\item[(A.1)] $(A_t)_{t\geq 0}$ is $({\cF}_t)_{t\geq 0}$-adapted
\item[(A.2)] There exists a set $\Lambda\in{\cF}_\infty$ and
a set $N\subset K$ with ${\rm Cap}(N)=0$ such that
${\mathbb P}_x(\Lambda)=1$ for all $x\in K\setminus N$,
$\theta_t(\Lambda)\subseteq \Lambda$ for all $t\geq 0$, and for all
$\omega\in \Lambda$: $t\mapsto A_t(\omega)$ is 
continuous, $A_0(\omega)= 0$ and for all $t,s\geq 0$:
\[
A_{t+s}(\omega) \, = \, A_s(\omega)+A_t(\theta_s\omega),
\]
where $(\theta_s)_{s\geq 0}$ is the time-translation semigroup on
$E$. 
\end{itemize}
Moreover, by a Positive Continuous Additive Functional (PCAF) of $X$ we mean a CAF of $X$
such that:
\begin{itemize}
\item[(A.3)] For all $\omega\in \Lambda$: $t\mapsto A_t(\omega)$ is 
non-decreasing.
\end{itemize}
Two CAFs 
$A^1$ and $A^2$ are said to be equivalent if
\[
{\mathbb P}_x \left( A^1_t=A^2_t \right) \, = \, 1, 
\quad \forall t>0, \ \forall x\in K\setminus N.
\] 
If $A$ is a linear combination of PCAFs of 
$X$, the Revuz-measure of $A$ 
is a Borel signed measure $\Sigma$ on $K$ such that:
\[
\int_{K} \varphi\, d\Sigma \, = \, 
\int_{K} {\mathbb E}_x\left[
\int_0^1 \varphi(X_t)\, dA_t \right]\,
\nu(dx), \quad \forall \varphi\in C_b(K).
\] 

\subsubsection{The Fukushima decomposition}
Let $h\in C^2_0((0,1);\R^d)$, and set $U:K\mapsto {\mathbb R}$,
$U(x):=\langle x,h \rangle$. By Theorem \ref{exmarkov}, the Dirichlet 
Form $({\cE},D({\cE}))$ is quasi-regular. Therefore
we can apply the Fukushima decomposition, as it is stated in
Theorem VI.2.5 in \cite{maro}, p. 180:
for any $U\in {\rm Lip}(H)\subset D({\cE})$,
we have that there exist an exceptional set $N$, a Martingale Additive Functional of finite
energy $M^{[U]}$ and a Continuous Additive Functional of zero energy 
$N^{[U]}$, such that for all $x\in K\setminus N$: 
\begin{equation}\label{tipler}
U(X_t)-U(X_0) \, = \, M^{[U]}_t + N^{[U]}_t, \quad t\geq 0, \
{\mathbb P}_x-{\rm a.s.}
\end{equation}

\subsubsection{Smooth measures}
We recall now the notion of smoothness for a positive Borel measure $\Sigma$ on $H$,
see \cite[page 80]{fot}. A positive Borel measure $\Sigma$ is {\it smooth} if
\begin{enumerate}
\item $\Sigma$ charges no set of zero capacity
\item there exists an increasing sequence of closed sets $\{F_k\}$ such that
$\Sigma (F_n) < \infty$, for all $n$ 
and $\lim\limits_{n\to \infty}{\rm Cap}(K-F_n)=0$ for all compact set $K$.
\end{enumerate}
By definition, a signed measure $\Sigma$ on $H$ is smooth if its total variation measure $|\Sigma|$ is smooth. That happens if and only if $\Sigma=\Sigma^1 - \Sigma^2$, where $\Sigma^1$ and $\Sigma^2$ are positive smooth measures, obtained from $\Sigma$ by applying the Jordan decomposition (see \cite[page 221]{fot}).

We recall a definition from \cite[Section 2.2]{fot}.
We say that a positive Radon measure $\Sigma$ on $H$ is {\it of finite energy}
if for some constant $C>0$
\begin{equation}\label{finiteenergy}
\int |v|\, d\Sigma\leq C\sqrt{\cE_1(v,v)},\qquad \forall \, v\in D(\cE)\cap C_b(H).
\end{equation}
If \eqref{finiteenergy} holds, then there exists an element $U_1\Sigma$ such that
\[
\cE_1(U_1\Sigma,v) = \int_H v\, d\Sigma, \qquad  \forall \, v\in D(\cE)\cap C_b(H).
\]
Moreover, by \cite[Lemma 2.2.3]{fot}, all measures of  finite energy are smooth.

Finally; by \cite[Theorem 5.1.4]{fot}, if $\Sigma$ is a positive smooth measure, then
there exists a PCAF $(A_t)_{t\geq 0}$, unique up to equivalence, with Revuz measure equal to $\Sigma$.

\subsection{The non-linearity}
We prove first that a.s. the function $(t,\theta)\mapsto |\partial_0 \varphi(u(t,\theta))|$
is in $L^1_{\rm loc}([0,T]\times\,]0,1[)$ for all $T\geq 0$. We start by the following
\begin{lemma}\label{nuphi}
For all $\delta\in(0,1/2)$ 
\[
\int \nu(dx)\left(
\int_\delta^{1-\delta} |\partial_0 \varphi(x_\theta)| \, d\theta
\right)^2<+\infty.
\]
\end{lemma}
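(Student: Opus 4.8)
The plan is to reduce the estimate for $\nu$ to a corresponding estimate for the approximating measures $\nu_n$, for which we have explicit tools, and then pass to the limit. The key observation is that by Lemma \ref{yosid}(3), the Yosida derivatives satisfy $|\partial\Phi_n(y)|\uparrow|\partial_0\varphi(y)|$ pointwise on $D(\varphi)$, and $\partial\Phi_n$ is $n$-Lipschitz; so $(t,\theta)\mapsto\partial\Phi_n(x_\theta)$ is a well-behaved (bounded, for fixed $n$) object. First I would establish, for each fixed $n$, a bound of the form
\[
\int \nu_n(dx)\left(\int_\delta^{1-\delta}|\partial\Phi_n(x_\theta)|\,d\theta\right)^2 \leq C_\delta
\]
with $C_\delta$ independent of $n$. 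Once this is in hand, Fatou's lemma together with the monotone convergence $|\partial\Phi_n|\uparrow|\partial_0\varphi|$ and the weak convergence $\nu_n\rightharpoonup\nu$ (carefully: one wants a lower-semicontinuity argument, since the integrand increases in $n$, so pass to the limit in $n$ first inside, using monotone convergence, along a version of the estimate that survives weak convergence) yields the claimed finiteness for $\nu$.

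To get the uniform-in-$n$ bound, the natural route is the integration by parts formula. Apply \eqref{hariya} (or rather its consequence, applied to $Fe^{-U_n}$, exactly as in the proof of the integration by parts formula for $\nu$ in Section 6) with a test function $F$ and a Cameron--Martin direction $h$ chosen so that $\langle h,\partial\Phi_n\rangle$ reproduces $|\partial\Phi_n(x_\theta)|$ over the interval $[\delta,1-\delta]$. Concretely, one takes $h$ supported in $(\delta,1-\delta)$ and, roughly, pointing in the direction of $\partial\Phi_n(x_\theta)$; since that direction depends on $x$, the cleaner implementation is to test against $h$ of the form $h_\theta = g(\theta)\,e$ for fixed unit vectors $e$ and smooth scalar bumps $g$, sum/integrate over a suitable family, or to use the duality $\int_\delta^{1-\delta}|\partial\Phi_n(x_\theta)|\,d\theta = \sup\{\int\langle h_\theta,\partial\Phi_n(x_\theta)\rangle\,d\theta : \|h_\theta\|\le\un{[\delta,1-\delta]}(\theta)\}$ and bound the supremum. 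Rearranging \eqref{ibpnu}-type identity, $\int\langle h,\partial\Phi_n\rangle F\,d\nu_n$ is expressed through $\int\partial_h F\,d\nu_n$, $\int\langle h'',x\rangle F\,d\nu_n$ and the boundary term $\int_{\partial O}\sigma(dy)\int n(y)\cdot h(S_w)F(w)\,\Sigma_n(y,dw)$; taking $F\equiv 1$ kills the first two terms (a constant has zero gradient, and $\int\langle h'',x\rangle\,d\nu_n$ is controlled by $\|h''\|$ times a moment of $\nu_n$ bounded uniformly via $\nu_n\le Z_n^{-1}\mu$ with $Z_n\ge Z$), leaving $\int\langle h,\partial\Phi_n\rangle\,d\nu_n$ bounded by $\|h''\|_{L^1}\,C + C\int_{\partial O}|h(S_w)|\,\lambda(y)\,\sigma(dy)$, hence by a constant times $\|h\|_{C^2}$ or so, uniformly in $n$. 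To get the square one either iterates this with $F$ itself an integral of $\partial\Phi_n$ (self-improving), or more simply works with $F = \int_\delta^{1-\delta}|\partial\Phi_{n}(x_\theta)|\,d\theta$ truncated, justifying that $F\in\mathcal{FC}^1$-closure by approximation; the dominated-convergence argument from the Section 6 proof shows the IBP extends to such $F$.

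The main obstacle, I expect, is handling the $x$-dependence of the optimal direction $h$ and making the duality/self-improvement rigorous: one cannot directly plug a state-dependent $h$ into the integration by parts formula, which is stated for deterministic $h\in H^1_0$. The fix is to use the variational characterization of the $L^1$ norm on $[\delta,1-\delta]$, discretize or mollify, and then either (i) take a countable dense family of directions $h^{(j)}$ and control $\sup_j$, paying a logarithmic or polynomial price that is absorbed by the $L^2$ integrability already available from $\nu_n\le Z^{-1}\mu$ and the Brownian-bridge moment bounds, or (ii) observe that squaring the $L^1$ integral and expanding gives a double integral $\int_\delta^{1-\delta}\int_\delta^{1-\delta}\E_{\nu_n}[|\partial\Phi_n(x_\theta)|\,|\partial\Phi_n(x_\sigma)|]\,d\theta\,d\sigma$, and bound the integrand by applying the IBP once in the $\theta$-variable with $F=|\partial\Phi_n(x_\sigma)|$-type weight; the boundary term is finite because $\int_{\partial O}\lambda(y)\,\sigma(dy)<\infty$ and $|h(S_w)|\le\|h\|_\infty$, while the bulk terms reduce to controlling $\E_\mu$ of products involving $\partial\Phi_n(x_\sigma)$, which are again tamed by the $n$-Lipschitz bound and a final invocation of the finiteness for $\mu$ via the structure of the Brownian bridge away from the endpoints. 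Collecting the bounds, letting $n\to\infty$ by monotone convergence and lower semicontinuity, gives the result; Remark \ref{nonoptimal} then records that assumption \eqref{assumphi} was used only through this lemma.
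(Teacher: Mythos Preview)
Your approach is vastly more complicated than necessary and misses the elementary argument the paper uses. The whole point of assumption \eqref{assumphi}, namely $\int_O|\partial_0\varphi(y)|^2\,dy<+\infty$, is that it gives the bound \emph{directly}: since $e^{-U}\le 1$, one has $\nu\le Z^{-1}\mu$ on $K$; under $\mu$ the one-point marginal $x_\theta$ is Gaussian with variance $\theta(1-\theta)\ge\delta(1-\delta)$, so its density on $\overline O$ is bounded above by a constant $C_\delta$ uniformly in $\theta\in[\delta,1-\delta]$. Hence
\[
\int|\partial_0\varphi(x_\theta)|^2\,\nu(dx)\ \le\ \frac{C_\delta}{Z}\int_O|\partial_0\varphi(z)|^2\,dz\ <\ +\infty,
\]
uniformly in $\theta\in[\delta,1-\delta]$, and Cauchy--Schwarz on the $d\theta$-integral finishes the proof in three lines. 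No Yosida approximation, no integration by parts, no limit passage is needed.

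Your route, by contrast, has genuine gaps. The ``self-improvement'' step to get the square---plugging $F=\int_\delta^{1-\delta}|\partial\Phi_n(x_\theta)|\,d\theta$ back into the integration by parts formula---is not justified: this $F$ is not in $\mathcal{FC}^1$, and approximating it produces exactly the kind of gradient term you are trying to control. The duality argument with a supremum over deterministic $h$ gives at best an $L^1(\nu_n)$ bound on the inner integral, not $L^2$. And even the final limit step is more delicate than you indicate: combining weak convergence $\nu_n\rightharpoonup\nu$ with monotone convergence of integrands requires continuity of the intermediate integrands $f_m(x)=\bigl(\int_\delta^{1-\delta}|\partial\Phi_m(x_\theta)|\,d\theta\bigr)^2$ and a truncation argument, which you gesture at but do not carry out. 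The direct computation via the Brownian-bridge marginal avoids all of this.
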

\begin{proof}
We have for all $\theta\in[\delta,1-\delta]$, by the definition \eqref{nu} of $\nu$
\[
\begin{split}
\int \nu(dx)\,  |\partial_0 \varphi(x_\theta)|^2 \leq & \ \frac1Z \int \mu(dx)\,  |\partial_0 \varphi(x_\theta)|^2 \, \un{
\left\{x_\theta\in \overline O\right\}} = \frac1{C(\theta)} \int_O |\partial_0 \varphi(z)|^2 \, e^{-\frac{|z|^2}{2\theta(1-\theta)} }\, dz
\\ \leq & \ \frac1{C_\delta} \int_O |\partial_0 \varphi(z)|^2 \, dz<+\infty
\end{split}
\]
by \eqref{assumphi}. Since this quantity does not depend on $\theta\in[\delta,1-\delta]$, we have the desired result by H\"older's inequality:
\[
\int \nu(dx)\left(
\int_\delta^{1-\delta} |\partial_0 \varphi(x_\theta)| \, d\theta
\right)^2\leq\int \nu(dx)
\int_\delta^{1-\delta} |\partial_0 \varphi(x_\theta)|^2 \, d\theta
 <+\infty.
\]
\end{proof}
Now we obtain that
\begin{prop}\label{L1loc}
The functional
\[
C_t:=\int_0^t \int_\delta^{1-\delta} |\partial_0 \varphi(u(s,\theta))| \, d\theta
\, ds, \qquad t\geq 0,
\]
is a well-defined PCAF of $X$. In particular,
the function $(t,\theta)\mapsto |\partial_0 \varphi(u(t,\theta))|$
is in $L^1_{\rm loc}([0,T]\times\,]0,1[)$ for all $T\geq 0$, $\bbP_x$-a.s.
for all $x\in K\setminus N$ for some $N\subset K$ with ${\rm Cap}(N)=0$.
\end{prop}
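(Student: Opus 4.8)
\textbf{Proof plan for Proposition \ref{L1loc}.}

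The plan is to identify the candidate functional $C_t$ as the PCAF associated, via the Revuz correspondence of \cite[Theorem 5.1.4]{fot}, with the measure
\[
\Sigma_\delta(dx) := \left(\int_\delta^{1-\delta} |\partial_0 \varphi(x_\theta)| \, d\theta\right) \nu(dx)
\]
on $K$, and then to check that this measure is smooth. First I would note that $\Sigma_\delta$ is a finite Borel measure on $K$: its total mass is $\int \nu(dx)\int_\delta^{1-\delta} |\partial_0 \varphi(x_\theta)| \, d\theta$, which is finite by Lemma \ref{nuphi} (the square-integrability there is more than enough). So it suffices to show $\Sigma_\delta$ charges no set of zero capacity and satisfies the exhaustion condition in the definition of smoothness. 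For the exhaustion, since $\Sigma_\delta$ is finite, one can simply take $F_k := \{x\in K : \int_\delta^{1-\delta}|\partial_0\varphi(x_\theta)|\,d\theta \le k\}$; these are increasing with $\Sigma_\delta(F_k)<\infty$, and $\bigcup_k F_k$ has full $\nu$-measure, hence by the absolute continuity condition \eqref{eq:absocon} and \cite[Theorem 4.1.2]{fot} its complement has zero capacity, which gives $\mathrm{Cap}(K\setminus F_k)\to 0$ (one should be slightly careful here and, if needed, replace $F_k$ by closed sets using inner regularity of $\nu$ together with the quasi-continuity machinery).

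The key point is that $\Sigma_\delta$ charges no set of zero capacity. The cleanest route is to show that $\Sigma_\delta$ is dominated by a measure of finite energy in the sense of \eqref{finiteenergy}, since all such measures are smooth by \cite[Lemma 2.2.3]{fot}. For this I would estimate, for $v\in D(\cE)\cap C_b(H)$,
\[
\int |v|\, d\Sigma_\delta = \int |v(x)|\left(\int_\delta^{1-\delta}|\partial_0\varphi(x_\theta)|\,d\theta\right)\nu(dx)
\le \int_\delta^{1-\delta}\left(\int |v(x)|^2\,d\nu\right)^{1/2}\left(\int|\partial_0\varphi(x_\theta)|^2\,d\nu\right)^{1/2} d\theta,
\]
and then bound $\int|\partial_0\varphi(x_\theta)|^2\,d\nu$ uniformly in $\theta\in[\delta,1-\delta]$ by $C_\delta^{-1}\int_O|\partial_0\varphi(z)|^2\,dz$ exactly as in the proof of Lemma \ref{nuphi}. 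This gives $\int|v|\,d\Sigma_\delta \le C_\delta' \|v\|_{L^2(\nu)} \le C_\delta'\sqrt{\cE_1(v,v)}$, so $\Sigma_\delta$ has finite energy and is therefore smooth.

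Once smoothness of $\Sigma_\delta$ is established, \cite[Theorem 5.1.4]{fot} yields a PCAF $(A_t)_{t\ge 0}$ with Revuz measure $\Sigma_\delta$, and by construction this $A_t$ is a version of $C_t=\int_0^t\int_\delta^{1-\delta}|\partial_0\varphi(u(s,\theta))|\,d\theta\,ds$; the absolute continuity condition lets us remove the exceptional set so that the identity holds $\bbP_x$-a.s. for all $x\in K\setminus N$ with $\mathrm{Cap}(N)=0$. Finiteness of $C_t$ for each $t$ then immediately gives that $(t,\theta)\mapsto|\partial_0\varphi(u(t,\theta))|$ lies in $L^1_{\mathrm{loc}}([0,T]\times\,]0,1[)$, after letting $\delta\downarrow 0$ along a sequence and taking the countable union of the corresponding exceptional sets. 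I expect the main obstacle to be the bookkeeping around exceptional sets and quasi-continuous versions — making sure the pathwise functional $C_t$ really coincides with the abstractly produced PCAF off a set of zero capacity, rather than merely $\nu$-a.e. — but the Strong Feller property and \eqref{eq:absocon} are precisely what make this routine here.
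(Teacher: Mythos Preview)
Your proposal is correct and follows essentially the same route as the paper: you set $F(w):=\int_\delta^{1-\delta}|\partial_0\varphi(w_\theta)|\,d\theta$, use Lemma~\ref{nuphi} to get $F\in L^2(\nu)$, and conclude that the measure $F\,d\nu$ has finite energy in the sense of \eqref{finiteenergy} (via Cauchy--Schwarz, $\int|v|F\,d\nu\le\|F\|_{L^2(\nu)}\|v\|_{L^2(\nu)}\le\|F\|_{L^2(\nu)}\sqrt{\cE_1(v,v)}$), hence is smooth by \cite[Lemma 2.2.3]{fot}, with associated PCAF $C_t=\int_0^t F(X_s)\,ds$. The paper does exactly this, only more tersely: it phrases the finite-energy check through the resolvent identity $\cE_1(R_1F,v)=\int Fv\,d\nu$, which amounts to the same Cauchy--Schwarz bound; your separate discussion of the exhaustion condition and the worry about exceptional sets are unnecessary once finite energy is established.
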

\begin{proof} Setting
\[
F:H\mapsto[0,+\infty], \qquad F(w):=\int_\delta^{1-\delta} |\partial_0 \varphi(w_\theta)| \, d\theta,
\]
then by Lemma \ref{nuphi} $F\in L^2(\nu)$ and moreover we can write
$C_t=\int_0^t F(X_s)\, ds$, $t\geq 0$. Denoting
\[
R_1F(x):=\int_0^\infty e^{- t} \,
{\mathbb E}_x \, [F(X_t)]\, dt, \qquad
x\in K, 
\]
then it is well known that
\[
{\cE}_1(R_1F,v) \, = \, \int_H F\, v \, d\nu,
\quad \forall v\in D({\cE}),
\]
and therefore \eqref{finiteenergy} holds. Then, $F\, d\nu$
is smooth and the associated PCAF is $(C_t)_{t\geq 0}$.
\end{proof}

\subsection{The reflection measure}
We are going to apply \eqref{tipler} to $U^h(x):=\langle x,h\rangle$, $x\in H$,
with $h\in C^2_c((0,1);\R^d)$. Clearly $U^h\in {\rm Lip}(H)\subset D({\cE})$.
Our aim is to prove the following
\begin{prop}\label{decomposition}
There are an exceptional set $N$ and
a unique measure ${\eta}(ds,d\theta)$ on $[0,+\infty[\, \times[0,1]$ such that 
for all $x\in K\setminus N$, $\bbP_x$-a.s. for all $t\geq 0$
\begin{equation} \label{decomposition2}
\begin{split}
& N_t^{[U^h]} = \\
& = \int_0^t \int_0^1 h_{\theta}\cdot n(u(s,\theta)) \,{\eta}(ds,d\theta)
 + \frac12\int_0^t \langle h'', u_s\rangle ds - \frac12\int_0^t \langle h,\partial_0\varphi(u_s)\rangle ds 
\end{split}
\end{equation}
where $h\in C_c^{\infty}((0,1);\R^d)$, and {\rm Supp}$({\eta})\subset \{(t,\theta) \, | \, u(t,\theta)\in \partial O\}$.
\end{prop}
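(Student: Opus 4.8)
The plan is to identify $N^{[U^h]}$, which by the Fukushima decomposition \eqref{tipler} is a CAF of zero energy, with a sum of three terms: a reflection contribution, the Laplacian drift, and the non-linearity drift. Two of these, namely $t\mapsto \frac12\int_0^t\langle h'',u_s\rangle\,ds$ and $t\mapsto \frac12\int_0^t\langle h,\partial_0\varphi(u_s)\rangle\,ds$, are immediately CAFs of the form $\int_0^t G(X_s)\,ds$: the first with $G$ bounded continuous (so trivially of finite energy), the second with $G(w)=\langle h,\partial_0\varphi(w)\rangle$, which lies in $L^2(\nu)$ by Lemma \ref{nuphi} (using that $h$ is supported in a compact subset of $(0,1)$, so only values $\theta\in[\delta,1-\delta]$ enter), hence is of finite energy by the resolvent argument already used in the proof of Proposition \ref{L1loc}. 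It therefore suffices to show that the remaining CAF
\[
B_t^h := N_t^{[U^h]} - \tfrac12\int_0^t\langle h'',u_s\rangle\,ds + \tfrac12\int_0^t\langle h,\partial_0\varphi(u_s)\rangle\,ds
\]
is of the asserted form $\int_0^t\int_0^1 h_\theta\cdot n(u(s,\theta))\,\eta(ds,d\theta)$ for a single measure $\eta$ not depending on $h$, and supported in the contact set.

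The key computational input is the integration by parts formula \eqref{ibpnu}. For $U^h(x)=\langle x,h\rangle$ one has $\nabla U^h \equiv h$, so $\partial_h F = \langle \nabla F, h\rangle$, and \eqref{ibpnu} rewrites, for every $F\in\mathcal{FC}^1$,
\[
\mathcal E(U^h,F) = -\tfrac12\int\langle h'',x\rangle F\,d\nu + \tfrac12\int\langle h,\partial_0\varphi\rangle F\,d\nu - \tfrac12\int_{\partial O}\sigma(dy)\int n(y)\cdot h(S_w)\,F(w)\,\Sigma(y,dw).
\]
The first two terms on the right are $\int F\,d(\cdots)$ against the (finite energy, hence smooth) measures $-\frac12\langle h'',x\rangle\,\nu(dx)$ and $\frac12\langle h,\partial_0\varphi\rangle\,\nu(dx)$, which by Fukushima's correspondence \cite[Thm 5.1.4]{fot} correspond exactly to the two explicit CAFs above (this is the standard identification of the zero-energy part of a Fukushima decomposition with the smooth measure obtained from the energy form; see \cite[Thm 5.4.2]{fot}). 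Hence the measure $\Lambda^h(dw) := \frac12\int_{\partial O}\sigma(dy)\, \big(n(y)\cdot h(S_w)\big)\,\Sigma(y,dw)$ — a \emph{signed} measure on $K$, carried by the contact set $\{w : S_w>0\} \subset \{w(S_w)\in\partial O\}$ — is the smooth measure whose associated PCAF-difference is precisely $-B_t^h$; concretely, $B_t^h = \int_0^t n(u(s,S_{u_s}))\cdot h(S_{u_s})\,dA^{0}_s$ where $A^0$ is the PCAF of the positive measure $\Sigma$ pushed forward appropriately. The substantive point is to check $\Lambda^h$ is of finite energy so that \cite[Thm 5.4.2]{fot} applies; this follows because $|\Lambda^h| \leq \frac12\|h\|_\infty\,\int_{\partial O}\Sigma(y,\cdot)\,\sigma(dy)$ and the latter positive measure has finite energy — its potential is controlled by the resolvent applied to the corresponding boundary functional, exactly as in \cite{za,hari}.

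Finally, to extract a \emph{single} measure $\eta(ds,d\theta)$ independent of $h$: one observes that for each fixed $s$ the map $h\mapsto n(u(s,r(s)))\cdot h(r(s))$ (where $r(s):=S_{u_s}$ is the unique contact point, as will be shown in Theorem \ref{main2}, or more elementarily: $h\mapsto \int n(y)\cdot h(S_w)\Sigma(y,dw)$) is linear in $h$, so defining $\eta$ on product sets $[0,t]\times A$ by testing against $h$'s and using a monotone class / Riesz representation argument over the PCAF $A^0$ yields a measure on $[0,\infty[\,\times[0,1]$ with $B_t^h = \int_0^t\int_0^1 h_\theta\cdot n(u(s,\theta))\,\eta(ds,d\theta)$ simultaneously for all $h\in C_c^\infty((0,1);\R^d)$, by separability of this space; the support statement is inherited from that of each $\Sigma(y,\cdot)$. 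Uniqueness of $\eta$ is immediate since the identity \eqref{decomposition2} determines $\int_0^t\int_0^1 h_\theta\cdot n(u(s,\theta))\,\eta(ds,d\theta)$ for every $h$, and $\{h\}$ separates measures on the contact set once the normal direction is fixed. \emph{The main obstacle} I anticipate is the finite-energy estimate for the boundary measure $\Lambda^h$ — i.e.\ controlling $\int v\,d|\Lambda^h| \leq C\sqrt{\mathcal E_1(v,v)}$ uniformly — since this is where the specific structure of $\Sigma(y,dw)$, the regularity of $\partial O$, and the integrability \eqref{assumphi} all have to be combined; everything else is bookkeeping with the Fukushima correspondence.
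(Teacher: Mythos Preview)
Your overall strategy---use the integration-by-parts formula \eqref{ibpnu} to identify the Revuz measure of $N^{[U^h]}$, then invoke the Fukushima correspondence---matches the paper's. But there are two genuine gaps.

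\textbf{The smoothness/finite-energy argument goes the wrong way.} You bound $|\Lambda^h|\leq\frac12\|h\|_\infty\,\Gamma$ where $\Gamma:=\int_{\partial O}\Sigma(y,\cdot)\,\sigma(dy)$, and then assert $\Gamma$ has finite energy ``exactly as in \cite{za,hari}''. But $\Gamma$ is precisely the object whose smoothness is in question; neither reference gives this directly. What the IBP formula and \cite[Cor.~5.4.1]{fot} give you for free is that the \emph{signed} measure $\Sigma^h$ is smooth for each $h$---not that the dominating positive measure $\Gamma$ is. The paper's trick is to run the comparison in the opposite direction: compute the total variation $|\Sigma^h|$ explicitly (this uses that $\nu$ and $\int\Sigma(y,\cdot)\,\sigma(dy)$ are mutually singular), then take $h=g\,e_i$ for a basis $\{e_1,\ldots,e_d\}$ of $\R^d$ and use $\min_{z\in\bbS^{d-1}}\sum_i|z\cdot e_i|>0$ to dominate $\Gamma^I$ by a finite sum of the (already smooth) measures $|\Sigma^{g e_i}|$. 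Without this step you have no control on $\Gamma$, and your anticipated ``main obstacle'' is not actually addressed.

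\textbf{The construction of $\eta$ is not bookkeeping.} Your sketch (``Riesz representation over the PCAF $A^0$'', ``monotone class'') presumes you already have a single PCAF $A^0$ and can write $B^h_t=\int_0^t n(u(s,S_{u_s}))\cdot h(S_{u_s})\,dA^0_s$. But you only have, for each $g\in C([0,1])$, a PCAF $A^g$ with Revuz measure $\Gamma^g$; assembling these into a \emph{measure} $\eta(ds,d\theta)$ on $[0,\infty[\times[0,1]$ requires real work. The paper does this in three steps: (i) Riesz on $g\mapsto A^g_T$ to get a measure $A_T(d\theta)$ for each fixed $T$; (ii) Radon--Nikodym of $A_t$ against $A_T$ to get densities $C_t(\theta)$; (iii) a c\`adl\`ag regularization of $t\mapsto C_t(\theta)$ to obtain a genuine kernel $\gamma_\theta(dt)$, whence $\eta(ds,d\theta)=\gamma_\theta(ds)\,A_T(d\theta)$. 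Your sketch skips this, and also invokes the uniqueness of the contact point $r(s)=S_{u_s}$ from Theorem~\ref{main2}, which is circular: that theorem is proved \emph{after} the present proposition and uses the measure $\eta$ you are trying to construct. The support condition likewise needs an argument via the Revuz correspondence (showing the PCAF $\int_0^t\un{\{u(s,\theta)\notin\partial O\}}\,\eta(ds,d\theta)$ has zero Revuz measure), not just ``inheritance'' from $\Sigma(y,\cdot)$.
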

The main tools of the proof are the integration by parts formula \eqref{ibpnu} and
a number of results from the theory of Dirichlet forms in \cite{fot}.
We start by noticing that, by applying \eqref{tipler} to $U^h(x):=\langle x,h\rangle$, $x\in H$, we obtain, recalling the definition \eqref{Sigma} of $\Sigma(y,dw)$:
\begin{lemma}\label{constreta}
The process $N^{[U^h]}$ is a linear combination of PCAFs
of $X$, and its Revuz measure is $\frac 12\,\Sigma^h$, where
\begin{equation} \label{Sigmah}
\Sigma^h(dw):=\left( \langle w,h''\rangle - \langle \partial_0\phi(w),h \rangle \right) \cdot
\nu(dw) \, + \int_{\partial O} \sigma(dy)\, n(y)\cdot h(S_w)\,   
\Sigma(y,dw).
\end{equation}
\end{lemma}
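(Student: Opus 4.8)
The plan is to identify the additive functional $N^{[U^h]}$ as the one associated, via the Revuz correspondence of Fukushima's theory, with the signed smooth measure $\frac12\Sigma^h$ defined in \eqref{Sigmah}, and then apply \cite[Theorem 5.1.4]{fot} to obtain a uniquely determined PCAF for each component of the Jordan decomposition of $\Sigma^h$. The starting point is the integration by parts formula \eqref{ibpnu}: rewriting it as
\[
\int \partial_h F\, d\nu \, = \, -\int \langle h'',x\rangle F\, d\nu + \int \langle h,\partial_0\varphi\rangle F\, d\nu - \int_{\partial O}\sigma(dy)\int n(y)\cdot h(S_w)\, F(w)\, \Sigma(y,dw),
\]
one recognizes that, for the Dirichlet form $\cE$ with $\cE(F,G)=\frac12\int\langle\nabla F,\nabla G\rangle\,d\nu$, the left-hand side $\int\partial_h F\,d\nu = 2\,\cE(U^h,F)$ when $U^h(x)=\langle x,h\rangle$ so that $\nabla U^h\equiv h$. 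Hence for all $F\in\mathcal{FC}^1$,
\[
\cE(U^h,F) \, = \, \frac12\int F\, d\Sigma^h,
\]
where $\Sigma^h$ is exactly \eqref{Sigmah}. This is the key identity linking $U^h$ to $\Sigma^h$.

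Next I would verify that $\Sigma^h$ is a signed smooth measure of finite total mass near the boundary of $[0,1]$, so that the theory applies. The term $\langle w,h''\rangle\,\nu(dw)$ is a bounded measure since $h\in C_c^\infty((0,1);\R^d)$ implies $h''$ is bounded and $x\mapsto\langle x,h''\rangle$ is $\nu$-integrable; it has finite energy because $v\mapsto\int\langle x,h''\rangle v\,d\nu$ is $\cE_1$-bounded on $D(\cE)\cap C_b(H)$ by Cauchy–Schwarz. The term $\langle\partial_0\varphi(w),h\rangle\,\nu(dw)$ is controlled by Lemma \ref{nuphi} together with the fact that $h$ is supported in $[\delta,1-\delta]$ for some $\delta\in(0,1/2)$: indeed $|\langle\partial_0\varphi(w),h\rangle|\le\|h\|_\infty\int_\delta^{1-\delta}|\partial_0\varphi(w_\theta)|\,d\theta$, which is in $L^2(\nu)$, so by the argument of Proposition \ref{L1loc} (using $R_1$ and the characterization of finite-energy measures) this term is also a finite-energy, hence smooth, measure. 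The surface term $\int_{\partial O}\sigma(dy)\,n(y)\cdot h(S_w)\,\Sigma(y,dw)$ is a finite signed measure because $\sigma$ is a finite surface measure, $|n(y)\cdot h(S_w)|\le\|h\|_\infty$, and $\int_{\partial O}\lambda(y)\,\sigma(dy)<+\infty$ follows from \eqref{hariya} applied with $F\equiv1$; that this measure is smooth follows since $U^h\in\mathrm{Lip}(H)\subset D(\cE)$ and the finite-energy inequality \eqref{finiteenergy} holds for each piece (or, alternatively, one invokes that $\Sigma^h$ arises as $2\,\cE(U^h,\cdot)$ with $U^h\in D(\cE)$, which already forces the positive and negative parts to be smooth by \cite[Lemma 2.2.3 and Theorem 2.2.2]{fot}).

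Having established that $\Sigma^h$ is smooth, I would invoke the Fukushima decomposition \eqref{tipler} for $U^h$: $U^h(X_t)-U^h(X_0)=M_t^{[U^h]}+N_t^{[U^h]}$ with $N^{[U^h]}$ of zero energy. By \cite[Theorem 5.1.3]{fot}, the continuous additive functional $N^{[U^h]}$ of zero energy is of bounded variation if and only if there is a smooth signed measure $\Sigma$ with $\cE(U^h,v)=\frac12\int\tilde v\,d\Sigma$ for all $v\in D(\cE)$ with $\tilde v$ a quasi-continuous version; comparing with the identity $\cE(U^h,F)=\frac12\int F\,d\Sigma^h$ extended from $\mathcal{FC}^1$ to $D(\cE)$ by density, this determines $\Sigma=\Sigma^h$ and shows $N^{[U^h]}$ is the difference of the PCAFs associated by the Revuz correspondence to $(\Sigma^h)^+$ and $(\Sigma^h)^-$. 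Uniqueness up to equivalence is then the uniqueness clause in \cite[Theorem 5.1.4]{fot}, and the absolute continuity condition \eqref{eq:absocon} allows the exceptional set to be taken uniform (the statement holds for all $x\in K\setminus N$). The main obstacle I anticipate is the density/closability step: extending the identity $\cE(U^h,F)=\frac12\int F\,d\Sigma^h$ from $F\in\mathcal{FC}^1$ to all $v\in D(\cE)$ (with quasi-continuous representatives) requires care, since $\partial_0\varphi$ may be unbounded and one must check that the right-hand side defines a continuous functional of $v$ in the $\cE_1$-norm — precisely the finite-energy estimate gathered above — and that $\mathcal{FC}^1$ is $\cE_1$-dense in $D(\cE)$, which is part of Theorem \ref{exmarkov}.
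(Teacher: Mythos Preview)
Your proposal is correct and follows essentially the same route as the paper: rewrite the integration by parts formula \eqref{ibpnu} as $\cE(U^h,v)=-\tfrac12\int v\,d\Sigma^h$, then invoke Fukushima's theory to identify $\tfrac12\Sigma^h$ as the Revuz measure of $N^{[U^h]}$ and decompose it as a difference of PCAFs via the Jordan decomposition. The paper is simply more economical: it appeals directly to \cite[Corollary 5.4.1 and Theorem 5.4.2]{fot}, which already deliver smoothness of $\Sigma^h$ and the bounded-variation decomposition of $N^{[U^h]}$ from the single identity $\cE(U^h,v)=\pm\tfrac12\int v\,d\Sigma^h$, so your piecewise verification of smoothness (and the density extension you flag as an obstacle) is unnecessary --- as you yourself note in your parenthetical alternative.
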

\begin{proof}
The integration by parts formula \eqref{hariya} can be rewritten
as
\[
\cE(U^h,v)=\frac12\int v(w)\, \Sigma^h(dw), \qquad \forall
\, v\in D(\cE)\cap C_b(H).
\]
By \cite[Corollary 5.4.1]{fot}, this implies that $\frac12\Sigma^h$ is
the Revuz measure of $N^{[U^h]}$ and that $\Sigma^h$ is 
a smooth signed measure. By \cite[Theorem 5.4.2]{fot}, 
this implies that $N^{[U^h]}$ is $\bbP_x$-a.s. a bounded variation process
for all $x\in K$; moreover, the Jordan decomposition $\Sigma^h
=\Sigma^h_1-\Sigma^h_2$, with $\Sigma^h_i$ positive measures
concentrated on disjoint sets, corresponds to a decomposition
$N^{[U^h]}=N^h_1-N^h_2$ with $N^h_i$ a PCAF of $X$ with
Revuz measure $\frac12\Sigma^h_i$.

\begin{lemma}\label{totvar} For all $h\in C^2_c((0,1);\R^d)$, 
the total variation measure $|\Sigma^h|$ of 
$\Sigma^h$ is equal to
\[
|\Sigma^h|(dw)=\left| \langle w,h''\rangle - \langle \partial_0\phi(w),h \rangle \right| \cdot
\nu(dw) \, +  \int_{\partial O} \sigma(dy)\, \left|n(y)\cdot h(S_w)\right| \,  
\Sigma(y,dw).
\]
\end{lemma}
\begin{proof}
Now, we can
notice that
$\nu(dw)$ and $\int_{\partial O} \sigma(dy)\, \Sigma(y,dw)$ are mutually singular,
since the former measure is concentrated on trajectories not
hitting the boundary $\partial O$, and the latter on trajectories 
hitting $\partial O$. Therefore
\[
|\Sigma^h|(dw)=\left| \langle w,h''\rangle - \langle \partial_0\phi(w),h \rangle \right| \cdot
\nu(dw) \, + \left|\int_{\partial O} \sigma(dy)\, n(y)\cdot h(S_{\cdot}) \,  
\Sigma(y,\cdot)\right|(dw).
\]
Now, by considering the sets $A:=\{w: n(w(S_w))\cdot h(S_w)\geq 0\}$ and 
$B=\{w: n(w(S_w))\cdot h(S_w)< 0\}$, we can see that
\[
 \left|\int_{\partial O} \sigma(dy)\, n(y)\cdot h(S_{\cdot}) \,  
\Sigma(y,\cdot)\right|(dw) = \int_{\partial O} \sigma(dy)\, \left|n(y)\cdot h(S_w)\right| \,  
\Sigma(y,dw)
\]
and we have the desired result.
\end{proof}
By definition, the total variation measure $|\Sigma^h|$ is smooth, and therefore
so is the measure
\[
\int_{\partial O} \sigma(dy)\, \left|n(y)\cdot h(S_w)\right| \,  
\Sigma(y,dw),
\]
since it is non-negative and bounded above by $|\Sigma^h|$, for any $h\in C^2_c((0,1);\R^d)$. 
Let us now consider a non-negative $g\in C^2_c(0,1)$ and a basis $\{e_1,\ldots,e_d\}$ of $\R^d$. Then the measure
\[
\Lambda^g(dw):=\sum_{i=1}^d \int_{\partial O} \sigma(dy)\, g(S_w)\,\left|n(y)\cdot e_i\right| \, \Sigma(y,dw)
\]
is smooth since it is sum of smooth measures. For any interval
$I\, \Subset (0,1)$ we can set
\begin{equation}\label{gammaI}
\Gamma^{I}(dw):=\int_{\partial O} \sigma(dy)\, \un{I}(S_w) \,  
\Sigma(y,dw).
\end{equation}
Let $\kappa:=\min_{z\in\bbS^{d-1}}\sum_{i=1}^d|z\cdot e_i|$. By compactness,
$\kappa>0$ and therefore, for $g\in C^2_c(0,1)$ such that $g\geq \un{I}$,
we obtain $0\leq \Gamma^{I}\leq \Lambda^g/\kappa$. Hence, $\Gamma^{I}$
is smooth.

 In particular, if $\{I_n\}_n$ is any
countable partition of $(0,1)$ in intervals $I_n\Subset(0,1)$, then
we obtain that the finite measure
\begin{equation}\label{gamma1}
\Gamma(dw):=\Gamma^1(dw)=\sum_n \Gamma^{I_n}(dw)
\end{equation}
is also smooth and finite by its explicit expression. Now, for any $g\in C([0,1])$, the measure
\[
\Gamma^{g}(dw):= \int_{\partial O} \sigma(dy)\, g(S_w) \,  
\Sigma(y,dw),
\]
is also smooth, since $|g|\leq \|g\|_\infty\, 1$ implies
$0\leq |\Gamma^{g}|\leq \|g\|_\infty\,\Gamma^1$.
By \cite[Theorem 5.1.4]{fot}, there exists
a PCAF $(A^g_t)_{t\geq 0}$, unique up to equivalence, with Revuz measure equal to $\Gamma^g$, for any $g\in C([0,1])$. At the same time, for any interval $I\subseteq[0,1]$, there exists a PCAF $(A^I_t)_{t\geq 0}$, unique up to equivalence, with Revuz measure equal to $\Gamma^I$. Moreover
\begin{equation}\label{esti}
|A^g_t| \leq \|g\|_\infty\, A^1_t, \qquad \forall\, t\geq 0,
\end{equation}
%
since the positive finite measure 
$(\|g\|_\infty\cdot\Gamma^1-\Gamma^{g})(dx)$ is finite and smooth and is therefore
the Revuz measure of a PCAF, so that we can conclude by the linearity
of the Revuz correspondence.

We want now to prove that there exists a finite positive measure $\eta$ on
$[0,T]\times[0,1]$ such that 
\begin{equation}\label{eccoeta}
A^g_T = \int_{[0,T]\times [0,1]} g(\theta)\, \eta(ds,d\theta) , \qquad \forall \, T\geq 0, \, g\in C([0,1]).
\end{equation}
Let $(g_n)_n$ be a dense sequence in $C([0,1])$. By the Revuz correspondence we have $A^{g_n+g_m} = A^{g_m}+A^{g_n}$. 
Let $\Lambda = \bigcap_{n,m} \{A^{g_n+g_m} = A^{g_m}+A^{g_n}\}$, so that
$\bbP_x(\Lambda)=1$ for all $x\in K\setminus N$, where
$N$ is an exceptional set. By \eqref{esti}, we obtain that
 the map $C([0,1])\ni g\mapsto A^g_T$
is linear, continuous and if $g\geq 0$ then $A^g_T\geq 0$.
Then by the Riesz representation theorem there exists a Radon measure 
$A_T(d\theta)$ on $[0,1]$, such that
\[
A^g_T=\int_{[0,1]} g(\theta)\, A_T(d\theta), \qquad \forall\, g\in C([0,1]).
\]
Moreover, $A_{t}(d\theta)$ satisfies
$0\leq A_{t}(d\theta)\leq A_T(d\theta)$  for $0\leq t\leq T$. Therefore $A_{t}\ll A_{T}$.
By the Radon-Nykodim theorem we have
\[
A_{t}(B) = \int_B C_{t}(y) {A}_T(dy)
\]
where $C_{t}\in L^1(A_{T}(d\theta))$.

Now, the problem is that $C_{t}(y)$ is defined for
${A}_T$-a.e. $y$, and the set of definition might depend on
$t$. We must show that it is possible to find a version of
$(C_{t})_{0\leq t\leq T}$ defined on the same set of full $A_T$-measure.

We claim that for ${A}_T$-a.e. $\theta$, 
$t\mapsto C_{t}(\theta)$ is equal to a c\`adl\`ag function. Indeed, let $(q_n)_n$ be a dense sequence in $[0,T]$ and
set
\[
{\Lambda}:=\bigcap\limits_{n} \left\{ \theta: C_{q_n}(\theta)\leq C_{t}(\theta),\ \forall \, t\in (q_m)_m,
\ q_n\leq t, \ C_{q_n}(\theta) = \lim\limits_{s\in(q_m)_m\downarrow q_n} C_{s}(\theta) \right\}.
\]
Notice that $A_T(\Lambda^c)=0$.
$C_{\cdot}$ is $d{A}$ a.e well defined. We denote by $\tilde{C}_{\cdot}(\cdot)$ the function defined on $[0,T]\times[0,1]$ by 
\[
\tilde{C}_{t}(\theta) := \lim\limits_{s\in(q_n) \downarrow t} C_{s}(\theta), \quad (t,\theta)\in[0,T[\, \times\Lambda, \qquad
\tilde{C}_{T}:=C_{T},
\]
and $\tilde{C}_{t}(\theta):=0$ if $t<T$ and $\theta\notin \Lambda$. By continuity of $t\mapsto A_{0,t}(B)$, we obtain
that
\[
A_{t}(B) = \int_B \tilde C_{t}(y)\, {A}_T(dy).
\]
Moreover $\tilde{C}_{\cdot}(\theta)$ is c\`adl\`ag and non-decreasing and measurable,
so that there exists a measurable kernel $(\gamma_y(B), \, y\in [0,1], \, B\in\cB([0,T]))$, such that 
$\tilde{C}_{t}(y)-\tilde{C}_{s}(y)=\gamma_y({]s,t]})$ and therefore
\[
\begin{split}
{A}_t(B)-{A}_s(B) & =  
\int_B\left(\tilde{C}_{t}(y)-\tilde{C}_{s}(y)\right) {A}_T(dy)\\
               & = \int_B \gamma_y(]s,t]) \, {A}_T(dy), \qquad t,s\in [0,T].
\end{split}
\]
Therefore, the measure $\eta(ds,dy):=\gamma_y(ds) \, {A}_T(dy)$
on $[0,T]\times[0,1]$ satisfies \eqref{eccoeta}.

\medskip
Now we have to show that the measure $\eta$ satisfies {\rm Supp}$({\eta})\subset \{(t,\theta) \, | \, u(t,\theta)\in \partial O\}$ and \eqref{decomposition}.
We set 
\[
F:C\left([0,1];\overline O\right)\mapsto\R, \quad F(w):=
\un{\{w(\theta)\notin\partial O, \ \forall\, \theta\in[0,1]\}}
\]
and
\[
L_t:=
\int_0^t F(X_s) \, \eta(ds\times[0,1]),
\qquad t\geq 0.
\]
Then, by \cite[Theorem 5.1.3]{fot}, $(L_t)_{t\geq 0}$ is a PCAF of $X$
with Revuz measure given by $\frac12f(w)\cdot\Gamma(dw)$, see \eqref{gamma1}.
On the other hand, $\Gamma(\{w: w(\theta)\notin\partial O, \ \forall\, \theta\in[0,1]\})=0$ by the very definition of $\Gamma$; indeed, $\Sigma(y,dw)$ is the
law of a process which visits a.s. $y\in\partial O$ at same time in $[0,1]$.

Therefore, by the one-to-one correspondence between PCAFs and positive
smooth measures, see \cite[Theorem 5.1.3]{fot}, we conclude that 
$f(w)\cdot\Gamma(dw)\equiv 0$ and therefore $L\equiv 0$. Thus,
for $\eta(ds\times[0,1])$-a.e. $s$, $u(s,\cdot)$ visits $\partial O$
at some $\theta\in(0,1)$, and in particular $S(u(s,\cdot))$, i.e.
the smallest such $\theta$, is in $(0,1)$.

Let us now notice that, again by \cite[Theorem 5.1.3]{fot} and by 
\eqref{eccoeta},
for any bounded Borel $G:C\left([0,1];\overline O\right)\mapsto\R$
and for any bounded Borel $g:[0,1]\mapsto\R$,
the process
\[
t\mapsto \int_0^t G(X_s) \, dA^g_s=
\int_{[0,t]\times[0,1]} G(X_s) \, g(\theta)\, \eta(ds,d\theta)
\]
is a PCAF of $X$ with Revuz measure $\frac12G(w)g(S_w)\,\Gamma(dw)$.
Therefore for any bounded Borel $G:C\left([0,1];\overline O\right)\times[0,1]
\mapsto\R$, the process
\[
t\mapsto \int_{[0,t]\times[0,1]} G(X_s,\theta)\, \eta(ds,d\theta)
\]
is a PCAF of $X$ with Revuz measure $\frac12G(w,S_w)\,\Gamma(dw)$. In particular,
if we choose $G(w,\theta):=\un{\{w(\theta)\notin\partial O\}}$, then
the process
\[
t\mapsto \int_{[0,t]\times[0,1]} \un{\{u(s,\theta)\notin\partial O\}} \, \eta(ds,d\theta)
\]
is a PCAF of $X$ with Revuz measure $\frac12\un{\{w(S_w)\notin\partial O\}}\Gamma(dw)\equiv 0$.

Therefore, by the one-to-one correspondence between PCAFs and positive
smooth measures, see again \cite[Theorem 5.1.3]{fot}, we conclude that
$\eta(\{(s,\theta):\, u(s,\theta)\notin\partial O\})=0$, i.e. {\rm Supp}$({\eta})\subset \{(s,\theta) \, | \, u(s,\theta)\in \partial O\}$.

It remains to show \eqref{decomposition}. 
We recall that, by \eqref{eccoeta}, for all Borel $I\subseteq [0,1]$,
the process $t\mapsto \eta([0,t]\times I)$ is a PCAF of $X$ with Revuz measure
$\frac12\Gamma^I$, see \eqref{gammaI}. Now, it is enough to notice that the 
CAF in the right hand side of \eqref{decomposition} has Revuz measure $\frac12\Sigma^h$, given by \eqref{Sigmah}. Since $N^{[U^h]}$ has the same Revuz measure,
then by the one-to-one correspondence between PCAFs and positive
smooth measures, $N^{[U^h]}$ and the 
CAF in the right hand side of \eqref{decomposition} are
equivalent.
\end{proof}

\subsection{Identification of the noise term}
We deal now with the identification of $M^{[U^h]}$ with
the integral of $h$ with respect to a space-time white noise.
\begin{prop}\label{decomposition3}
There exists a Brownian sheet $(W(t,\theta), \, t\geq 0, \theta\in[0,1])$,
such that
\begin{equation}
M^{[U^h]}_t = \int_0^t \int_0^1 h_{\theta} \, W(ds,d\theta), \qquad h \in H.
\end{equation}
\end{prop}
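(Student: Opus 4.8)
The plan is to identify the martingale additive functional $M^{[U^h]}$ with a stochastic integral against a space-time white noise by computing its quadratic variation and using the theory of martingale additive functionals together with a Lévy-type characterization. First I would recall that, by the Fukushima decomposition \eqref{tipler} applied to $U^h(x)=\langle x,h\rangle$, $M^{[U^h]}$ is a martingale additive functional of finite energy, whose energy (equivalently, whose quadratic variation) is governed by the carré du champ of the Dirichlet form $\cE$. Since $\cE$ acts only through the gradient $\nabla$ in $H$, and $\nabla U^h(x) = h$ is constant, the energy measure $\mu_{\langle M^{[U^h]}\rangle}$ should be computed as $\|h\|_H^2$ times the invariant measure $\nu$, and hence the predictable quadratic variation of $t\mapsto M^{[U^h]}_t$ under $\bbP_x$ (for $x$ outside an exceptional set) is $\langle M^{[U^h]}\rangle_t = t\,\|h\|_H^2$. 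More generally, for $h,k\in H$ one gets $\langle M^{[U^h]},M^{[U^k]}\rangle_t = t\,\langle h,k\rangle_H$ by polarization.

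Next I would fix a complete orthonormal system $(f_j)_{j\geq1}$ of $H=L^2([0,1];\R^d)$ and set $\beta^j_t := M^{[U^{f_j}]}_t$. By the quadratic variation computation, each $\beta^j$ is a continuous local martingale with $\langle \beta^i,\beta^j\rangle_t = t\,\delta_{ij}$, so by Lévy's characterization $(\beta^j)_{j\geq1}$ is a sequence of independent standard Brownian motions. Define the $H$-valued (cylindrical) process and then the associated Brownian sheet by
\[
W(t,\theta) := \sum_{j\geq1} \beta^j_t \int_{[0,1]\times\{1,\dots,d\}} f_j ,
\]
or more precisely define, for a rectangle $[0,t]\times[0,\theta]$, the Gaussian white-noise field $W(ds,d\theta)$ whose covariance is Lebesgue measure on $[0,\infty)\times[0,1]$, consistently with the family $(\beta^j)$; this is the standard reconstruction of a Brownian sheet from its expansion in an orthonormal basis of the spatial variable. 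By construction, for every $h\in H$ one has $\int_0^t\int_0^1 h_\theta\,W(ds,d\theta) = \sum_j \langle h,f_j\rangle_H\,\beta^j_t$, and this series equals $M^{[U^h]}_t$ because both sides are continuous martingales with the same quadratic (co)variation structure and the map $h\mapsto M^{[U^h]}$ is linear and $L^2$-isometric onto its image. Finally, one checks that $W$ is adapted to $(\cF_t)$ and has independent increments with the correct covariance, i.e. it is genuinely a Brownian sheet; the identity in the statement then holds $\bbP_x$-a.s. for $x\in K\setminus N$, with $N$ the union of the exceptional sets arising above.

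The main obstacle I expect is the rigorous computation of the quadratic variation of $M^{[U^h]}$: one needs the precise form of the energy measure of the Fukushima martingale part for a linear functional, i.e. that it equals $\frac12\cdot 2\|h\|_H^2\,\nu(dx) = \|h\|_H^2\,\nu(dx)$ (up to the normalization of $\cE$ in \eqref{e^}), and then to pass from the energy measure / Revuz measure identity to the almost-sure statement $\langle M^{[U^h]}\rangle_t = t\|h\|_H^2$ under $\bbP_x$ for quasi-every $x$, using \cite[Theorem 5.2.3 and the CAF–Revuz correspondence]{fot}. A secondary technical point is handling countably many $h$ (the basis $(f_j)$) simultaneously outside a single exceptional set, which is routine since a countable union of exceptional sets is exceptional, and invoking Lévy's characterization for the resulting countable family of continuous martingales.
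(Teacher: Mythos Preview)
Your proposal is correct and follows essentially the same approach as the paper: compute the energy measure of $M^{[U^h]}$ via \cite[Theorem~5.2.3]{fot} to obtain $\langle M^{[U^h]}\rangle_t=t\|h\|^2$ and $\langle M^{[U^{h_1}]},M^{[U^{h_2}]}\rangle_t=t\langle h_1,h_2\rangle$, then invoke L\'evy's characterization to build the Brownian sheet. The only cosmetic difference is that the paper defines $W(t,\theta):=M^{[U^{\un{[0,\theta]}}]}_t$ directly rather than passing through an orthonormal basis expansion.
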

\begin{proof}
We recall that, for $U\in D(\cE)$, the process $M^{[U]}$ is a continuous
martingale, whose quadratic variation 
$(\langle M^{[U]}\rangle_t)_{t\geq 0}$
is a PCAF of $X$ with Revuz measure $\mu_{\langle M^{[U]}\rangle}$ given by
the formula
\begin{equation}\label{quadratic}
\int f\, d\mu_{\langle M^{[U]}\rangle} = 2\cE(Uf,U)-\cE(U^2,f), \qquad
\forall \, f\in D(\cE)\cap C_b(H),
\end{equation}
see \cite[Theorem 5.2.3]{fot}. Now, if we apply this formula
to $U^h(x)=\langle x,h\rangle$, then we obtain 
\[
\int f\, d\mu_{\langle M^{[U^h]}\rangle} = \|h\|^2\int f\,d\nu, \qquad
\forall \, f\in D(\cE)\cap C_b(H).
\]
Therefore, the quadratic variation $\langle M^{[U^h]}\rangle_t$ is equal
to $\|h\|^2t$ for all $t\geq 0$, and, by L\'evy's Theorem, $(M^{[U^h]}\cdot\|h\|^{-1})_{t\geq 0}$ is a Brownian motion. Moreover, the parallelogram law,
if $h_1,h_2\in H$ and $\langle h_1,h_2\rangle=0$, then the
quadratic covariation between $M^{[U^{h_1}]}$ and $M^{[U^{h_2}]}$ is equal to
\[
\langle M^{[U^{h_1}]}, M^{[U^{h_2}]}\rangle_t = t \, \langle h_1,h_2\rangle,\qquad
t\geq 0.
\] 
Therefore, $(M^{[U^h]}_t, t\geq 0, h\in H)$ is a Gaussian process
with covariance structure
\[
\E_x\left(M^{[U^{h_1}]}_t \, M^{[U^{h_2}]}_s \right) = 
s\wedge t\,  \langle h_1,h_2\rangle.
\]
If we define $W(t,\theta):=M^{[U^{h}]}_t$ with
$h:=\un{[0,\theta]}$, $t\geq 0$, $\theta\in[0,1]$, then $W$ is the
desired Brownian sheet.
\end{proof}

\subsection{From $K\setminus N$ to $K$}
We have so far proved existence of an exceptional set $N$ such that
for all $x\in K\setminus N$ there is a weak solution of equation
\eqref{1}. We show now how to construct a weak solution for $x\in N$.

Let $x\in N$.
By the absolute continuity relation \eqref{eq:absocon}, we have that
$\bbP_x$-a.s. $X_\gep\in K\setminus N$ for $\gep>0$, since $\nu(K\setminus N)=1$. Therefore,
we can set for all $\omega\in E$ and $0< \gep\leq s\leq t$
\[
\eta^\gep([s,t]\times I)(\omega):=\eta([s-\gep,t-\gep]\times I)(\theta_\gep\omega),
\]
where $(\theta_t)_{t\geq 0}$ is the time-translation operator of $E$. Then
$\gep\mapsto\eta^\gep([s,t]\times I)$ is monotone non-increasing, since
\[
\eta^\gep([s,t]\times I)(\omega)-\eta^\delta([s,t]\times I)(\omega)
=\eta^{\delta-\gep}([s,t]\times I)(\theta_\gep\omega), \qquad
0<\gep<\delta.
\]
As $\gep\downarrow 0$, we obtain existence of a $\sigma$-finite
measure $\eta(ds,d\theta)$ on $]0,T]\times[0,1]$, which satisfies the
required properties. A similar argument works for the non-linear 
part. The proof of Proposition \ref{exwe} is concluded.

\section{Pathwise uniqueness and strong solutions}

We prove that equation \eqref{1} has a pathwise unique solution. This follows the lines of \cite{nupa}.
By a Yamada-Watanabe type result from \cite{kurtz}, pathwise uniqueness and existence of weak solutions imply existence and uniqueness of strong solutions and uniqueness in law. 

\subsection{Pathwise uniqueness}

\begin{prop}\label{uniuni}
Pathwise uniqueness holds for equation \eqref{1}.
\end{prop}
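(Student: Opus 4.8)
The plan is to follow the classical monotonicity/energy argument of Nualart--Pardoux \cite{nupa}, adapted to the multidimensional convex setting. Let $(u^1,\eta^1,W)$ and $(u^2,\eta^2,W)$ be two weak solutions of \eqref{1} driven by the \emph{same} Brownian sheet $W$ and starting from the same initial datum $x$. Set $v:=u^1-u^2$. The first step is to note that, subtracting the two weak formulations \eqref{wweak}, the white noise terms cancel, so that $v$ solves (in the weak/distributional sense, on each strip $[\gep,t]\times[0,1]$)
\begin{equation*}
\langle v_t-v_\gep,h\rangle = \frac12\int_\gep^t\langle h'',v_s\rangle\,ds
+ \int_\gep^t\int_0^1 h(\theta)\cdot\bigl(n(u^1)\,\eta^1 - n(u^2)\,\eta^2\bigr)(ds,d\theta)
- \frac12\int_\gep^t\langle h,\partial_0\varphi(u^1_s)-\partial_0\varphi(u^2_s)\rangle\,ds.
\end{equation*}
In particular $v$ has enough regularity (it is the difference of two solutions of a linear heat equation with signed-measure and $L^1_{\rm loc}$ data) to justify an It\^o-type energy computation for $\|v_t\|^2$; the white-noise correction terms are absent precisely because the noise cancelled.

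The second, and central, step is to compute $\frac{d}{dt}\|v_t\|^2$ and show it is nonpositive. Formally,
\begin{equation*}
\frac12\frac{d}{dt}\|v_t\|^2 = -\|v'_t\|^2
- \frac12\int_0^1 \bigl(u^1-u^2\bigr)\cdot\bigl(\partial_0\varphi(u^1_\theta)-\partial_0\varphi(u^2_\theta)\bigr)\,d\theta
+ \int_0^1 \bigl(u^1-u^2\bigr)(s,\theta)\cdot\bigl(n(u^1)\,\eta^1-n(u^2)\,\eta^2\bigr)(ds,d\theta).
\end{equation*}
The Dirichlet term $-\|v'_t\|^2$ is $\leq 0$. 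The $\partial_0\varphi$ term is $\leq 0$ by monotonicity of the subdifferential of the convex function $\Phi$ (i.e. $\langle p-q,\partial_0\varphi(p)-\partial_0\varphi(q)\rangle\geq 0$, exactly as used in the proof of Proposition \ref{measinva}). The crucial point is the reflection term: one writes it as
\begin{equation*}
\int \bigl(u^1-u^2\bigr)\cdot n(u^1)\,\eta^1 \;-\; \int \bigl(u^1-u^2\bigr)\cdot n(u^2)\,\eta^2
= -\int\bigl(u^2-u^1\bigr)\cdot n(u^1)\,\eta^1 \;-\;\int\bigl(u^1-u^2\bigr)\cdot n(u^2)\,\eta^2,
\end{equation*}
and uses convexity of $O$: since $\eta^i$ is supported on $\{u^i(s,\theta)\in\partial O\}$ and $u^j(s,\theta)\in\overline O$, the supporting-hyperplane inequality at the boundary point $u^1(s,\theta)$ with inner normal $n(u^1(s,\theta))$ gives $(u^2-u^1)\cdot n(u^1)\geq 0$ on the support of $\eta^1$ (a point of the closed convex set lies on the inner side of the tangent plane), hence $-\int(u^2-u^1)\cdot n(u^1)\,\eta^1\leq 0$, and symmetrically for the $\eta^2$ term. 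Altogether $\frac{d}{dt}\|v_t\|^2\leq 0$, and since $\|v_\gep\|\to 0$ as $\gep\downarrow 0$ (both solutions satisfy $\E[\|u_\gep-x\|^2]\to0$), Gronwall (in fact just monotonicity) yields $\|v_t\|=0$ for all $t$, i.e. $u^1=u^2$; then $\eta^1=\eta^2$ follows by reading off the reflection terms from \eqref{wweak}.

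The main obstacle is \emph{rigor of the energy identity}: $v$ is not smooth, the measures $\eta^i$ are only locally finite, and $\partial_0\varphi(u^i)$ is only $L^1_{\rm loc}$ away from the endpoints, so the differentiation of $\|v_t\|^2$ must be done carefully. The standard remedy, as in \cite{nupa}, is to regularize: mollify $v$ in the space variable $\theta$ (e.g. convolve with an approximation of the identity, or project onto finitely many Fourier modes $e_k$), derive the energy estimate for the smooth approximation where all the above manipulations are legitimate, control the commutator error between the mollification and the heat semigroup, and pass to the limit using the $L^1_{\rm loc}$ integrability of $\partial_0\varphi(u^i)$ (Proposition \ref{L1loc}) together with the finiteness of $\eta^i$ on $[\gep,T]\times[0,1]$; one works first on $[\gep,T]$ and lets $\gep\downarrow 0$ at the end. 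A secondary technical point is that the convexity inequality $(u^j-u^i)\cdot n(u^i)\geq 0$ must be invoked $\eta^i$-almost everywhere on the contact set, which is exactly guaranteed by the support condition \eqref{contact} that both solutions satisfy, together with smoothness of $\partial O$ so that $n$ is well defined there.
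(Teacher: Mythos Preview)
Your proposal is correct and follows essentially the same Nualart--Pardoux energy/monotonicity argument as the paper: the paper likewise subtracts the two weak formulations, uses convexity of $O$ to show the reflection cross-term is nonpositive (via the supporting-hyperplane inequality $(u^j-u^i)\cdot n(u^i)\geq 0$ on $\mathrm{supp}\,\eta^i$), and uses monotonicity of $\partial\varphi$ for the nonlinear term. The one point where the paper is more explicit is the regularization device: because the admissible test functions in \eqref{wweak} lie in $C^2_c((0,1);\R^d)$ and $\partial_0\varphi(u^i)$ is only $L^1_{\rm loc}$ on $]0,1[$, the paper (following \cite{nupa}) does not aim directly at $\|v_t\|^2$ but introduces a spatial cutoff $\psi\in C^\infty_c(0,1)$, takes $h_{n,m}=((z\psi)*\zeta_{n,m})\psi$ as test function, derives a Gronwall-type inequality for $\int |z|^2\psi^2$, and only then lets $\psi\uparrow 1$ via the ``judicious choice of $\psi$'' at the end of \cite{nupa}; your suggested mollification/Fourier projection would have to be combined with such a cutoff to produce admissible test functions.
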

\begin{proof}
Let $(u^1,{\eta}^1,W)$ and $(u^2,{\eta}^2,Z)$ be two weak solutions of  \eqref{1}, we denote 
 \[
 z := u^1-u^2, \qquad \pi(ds, d\theta) = n(u^1(s,\theta))\cdot{\eta}^1(ds,d\theta)-n(u^2(s,\theta))\cdot{\eta}^2(ds,d\theta),
 \] 
 so for $h \in C^2_c((0,1)\times[0,T];\R^d)$ and $0<\gep\leq T$, denoting
 $\partial_s=\frac{\partial}{\partial s}$ and $\partial_\theta^2=\frac{\partial^2}{\partial\theta^2}$, 
\begin{equation}\label{uniqueness}
\begin{split}
\langle h_T,z_T\rangle-\langle h_\gep,z_\gep\rangle = &\ \frac12 \int_\gep^T \langle h'', z_s\rangle \,ds - \frac12\int_\gep^T\langle h_s, \partial_0 \phi (u^1_s) -\partial_0\phi(u^2_s)\rangle ds 
\\
& +  \int_\gep^T\int_0^1 h(s,\theta)\cdot\pi(ds,d\theta)+ \int_\gep^T \langle 
\partial_s{h}_s, z_s\rangle ds.
\end{split}
\end{equation}
Let $\zeta$ be an infinitely differentiable even function, with support contained in $[-1,1]$, such that $\int_{[-1,1]}\zeta(x)dx = 1$ and $\sum_{i,j}\zeta(x_i-x_j)y_iy_j\geq 0$ for any $(x_i)_{i\leq n}$ and $(y_i)_{i\leq n}\in \bbR^n$, $n\in \bbN$. Let  $\psi$ be an 
infinitely differentiable function with compact support, we consider now the function $h_{n,m}$ defined by
\[
h_{n,m}:=((z\psi) \textit{\textasteriskcentered} \zeta_{n,m})\psi
\]
where $\zeta_n(x):=n\zeta(nx)$ and $\zeta_{n,m}(t,\theta):=\zeta_n(t)\zeta_m({\theta})$. We will study the asymptotic behaviour of each term in \eqref{uniqueness} substituting $h$ by $h_{n,m}$. First we have
\[
\lim\limits_{n,m}\, \langle h_{n,m}(t), z(t)\rangle = \|z(t)\,\psi\|^2.
\]
Next 
\[
\int_\gep^T \langle \partial_s{h}_{n,m}(s), z(s)\rangle\, ds = \int_\gep^T\int_{(t-1/n)^+}^{t+1/n}\zeta_n'(t-s)\,\Gamma_m(s,t)\,ds\,dt
\]
where $\Gamma_m$ is a symmetric function of $(s,t)$, defined by 
\[
\Gamma_m(s,t):=\int_0^1\int_0^1z(s,\theta)\cdot z(t,\upsilon) \, \psi(\theta)\,\zeta_m(\upsilon - \theta)\,\psi(\upsilon) \,d\theta \,d\upsilon.
\]
As $\zeta'(s)=-\zeta'(-s)$ the integral 
\[
\int_\gep^T\int_{\max(t-1/n,\gep)}^{\min(t+1/n,T)}\zeta_n'(t-s)\,\Gamma_m(s,t)\,ds\,dt
=\int_{[\gep,T]^2} \un{\{|t-s|\leq 1/n\}}\,
\zeta_n'(t-s)\,\Gamma_m(s,t)\,ds\,dt
\]
vanishes. Therefore if $1/n\leq \gep$ then as $n\to+\infty$
\[
\begin{split}
& \left|\int_\gep^T \langle \partial_s{h}_{n,m}(s), z(s)\rangle \,ds\right|  \leq  \left|\int_{T-1/n}^Tdt\int_T^{t+1/n}ds\, \zeta_n'(t-s)\,\Gamma_m(s,t) \right| \\
 & + \left|\int_{\gep}^{\gep+1/n}dt\int_{t-1/n}^\gep ds\, \zeta_n'(t-s)\,\Gamma_m(s,t) \right| \leq  \dfrac{K}{n} \to 0.
 \end{split}
\]
Now, because of the properties of $(u_i,\eta_i)$
\[
\begin{split}
& \lim\limits_{n,m} \int_\gep^T\int_0^1 h_{n,m}(s,\theta)\, \pi(ds,d\theta) =  \int_\gep^T\int_0^1 \psi(\theta) \, z(s,\theta) \cdot \pi(ds,d\theta)  \\
& =  -\int_\gep^T\int_0^1 \psi^2(\theta)\left\{
 u^2(s,\theta)\cdot n(u^1(s,\theta))\, {\eta}^1(ds,d\theta) + u^1(s,\theta)\cdot n(u^2(s,\theta))\, {\eta}^2(ds,d\theta) \right\} \\
&\leq  0.
\end{split}
\]
By the convexity of $\phi$  we have 
\[
\lim\limits_{n,m}\int_\gep^T\langle h_{n,m}(s), \partial_0 \phi (u^1_s) -\partial_0\phi(u^2_s)\rangle \, ds \geq 0.
\]
For the last term, we notice that $\int_\gep^T \langle \partial^2_\theta h_{n,m}, z_s\rangle ds \to \int_\gep^T \langle \partial^2_\theta h_{n}, z_s\rangle ds$ when $m \to \infty$. We first suppose that $z$ is smooth, integrating by parts $\langle\partial^2_\theta h_{n}(s),z(s)\rangle$ we obtain
\[
\langle\partial^2_\theta h_{n}(s),z(s)\rangle \leq 
\langle( z\psi) \textit{\textasteriskcentered} \zeta_{n},\psi''\, z(s)\rangle +\langle( z\psi') \textit{\textasteriskcentered} \zeta_{n},\psi'\, z(s)\rangle.
\]
Moreover we obtain the same inequality for $z$ approximating $z$ with smooth functions.
As a result
\[
\liminf\limits_{n} \lim\limits_{m}\int_\gep^T \langle \partial^2_\theta h_{n,m}, z_s\rangle ds\leq \dfrac{1}{2}\int_\gep^T\int_0^1 |z_s|^2\, (\psi^2)'' ds
\]
Finally, we have obtained
\[
\int_0^1 \left(z^2(T,\theta)-z^2(\gep,\theta)\right)\psi^2(\theta)\,d\theta \leq \dfrac{1}{2}\int_\gep^T\int_0^1z^2(s,\theta)\, (\psi^2)''(\theta)\, ds\, d\theta
\]
and letting $\gep\to 0$
\[
\int_0^1 z^2(T,\theta)\, \psi^2(\theta)\,d\theta \leq \dfrac{1}{2}\int_0^T\int_0^1z^2(s,\theta)\,(\psi^2)''(\theta)\, ds\, d\theta.
\]
The rest of the proof consists of choosing a judicious expression for $\psi$, which can be done as at the end of the proof of uniqueness in \cite{nupa}. Finally, we obtain that that $z\equiv 0$ and ${\eta}^1 = {\eta}^2$.
\end{proof}

\subsection{Strong solutions}
Until now we have dealt with weak solutions. Now we show that
all weak solutions are in fact strong. 

We recall that a weak solution is given by a triple $(u,\eta,W)$.
We set $\cX:=(u,\eta)$
and $\cY:=W$. In the notation of \cite{kurtz}, equation \eqref{1}
can be interpreted as a relation $\Gamma(\cX,\cY)=0$ with
$\Gamma:S_1\times S_2\mapsto\R$ a Borel function defined
on the product of two Polish spaces $S_1$ and
$S_2$, for which
{\it pathwise} (or {\it pointwise}) uniqueness holds by Proposition
\ref{uniuni}. 
Therefore, by \cite[Lemma 2.7]{kurtz}, 
any weak solution of \eqref{1} is also strong. This concludes the
proof of Theorem \ref{main}.

\section{The reflection measure}
We want now to prove Theorem \ref{main2}, following the approach
of \cite{za}.
  Let $I\subseteq[0,1]$ be a Borel set.  
Denote by $\psi_I$ the indicator
function of the set $\{x\in K: x(\theta)\notin\partial O, 
\, \forall \theta\in[0,1]\backslash I\}$. The key point is the
following formula: for all $F\in C_b(H)$
\begin{equation}
\label{keypoint}
\int_{\partial O} \sigma(dy) 
\int \psi_I (w)\, F(w)\ \Sigma(y,dw)
= \int_{\partial O} \sigma(dy)  \int F(w)\, \un{I}(S_w)\, 
\Sigma(y,dw).
\end{equation}
By the definition of $\psi_I$, this follows because 
$\Sigma(y,dw)$-a.s. $S_w$ is the only $\theta\in[0,1]$
such that $w_\theta\in\partial O$. 
Let $A_t:=\eta([0,t]\times[0,1])$, $t\geq 0$.
We consider the following PCAF of $X$:
\[
(\psi_I \cdot A)_t \, := \, \int_0^t \psi_I(X_s) \, 
dA_s, \quad t\geq 0.
\]
Its Revuz measure is
\[
\frac12\,\psi_I(w)\int_{\partial O}\sigma(dy) \, \Sigma(y,dw).
\] 
In particular, by (\ref{keypoint}):
\begin{eqnarray*}
& &
\int_{K} {\mathbb E}_x \left[ \int_0^1
\left[ F\psi_I \right](X_s)\,
dA_s \right]
\, \nu^F(dx)
\\ \\ & & = \, \frac12\,\int_{\partial O}\sigma(dy)\int\left[ F\psi_I \right](w)\, \Sigma(y,dw)
\, = \, \frac12\,\int_{\partial O}\sigma(dy)\int F(w)\, \un{I}(S_w)\, \Sigma(y,dw)
\end{eqnarray*}
which is the Revuz measure of $A^{1_I}$, see \eqref{eccoeta}. By Theorem 5.1.6 in
\cite{fot}, we obtain that $A$ and $A^{1_I}$ are 
in fact equivalent as PCAFs of $X$, i.e. for all $x\in K$:
\begin{equation}
\label{strum}
\eta([0,t],I) \, = \, \int_0^t
\psi_I(X_s)\, \eta(ds,[0,1])
\quad \forall t\geq 0, \ \bbP_x{\rm -a.s.}
\end{equation}
Fix $x\in K$. We consider regular conditional
distributions $(t,J)\mapsto\gamma(t,J)$
of $\eta$ on $[0,\infty)\times[0,1]$, w.r.t. the Borel map $(t,\theta)\mapsto t$, where $t\geq 0$, $J\subseteq[0,1]$ Borel. In other words,
we obtain a $\sigma$-finite
measurable kernel $(t,J)\mapsto\gamma(t,J)$ such that:
\begin{equation}
\label{struc}
\eta([t,T],J)
\, = \, \int_t^T \gamma(s,J) \, \eta(ds,
[0,1])
\end{equation}
for all $J\subset[0,1]$ and $0\leq t\leq T<\infty$.
By (\ref{strum}) and (\ref{struc}) there exists
a measurable set $S\subseteq{\mathbb R}^+$ such that a.s.:
\[
\eta \left (\left[{\mathbb R}^+\backslash S \right]
\, \times \, [0,1] \right) \, = \, 0, \quad
{\rm and \ for \ all} \ s\in S:
\ \gamma(s,[0,1]) \, > \, 0, 
\]
\begin{equation}\label{strup}
\gamma(s,[a_n,b_n]) \, = \, \psi_{[a_n,b_n]}
(X_s), \quad \forall a_n,b_n\in{\mathbb Q}\cap
[0,1].
\end{equation}
Notice that, since $\psi_I$ is an indicator function, the right hand
side of (\ref{strup}) can assume only the values $0$ and $1$. Therefore
the measure $I\mapsto\gamma(s,I)$ takes only the values $0$ and $1$ on
all intervals $I$ with rational extremes in $[0,1]$, and the value $1$
is assumed, since $\gamma(s,[0,1]) > 0$. Then
$\gamma(s,\,\cdot\,)$ is a Dirac mass at some point
$r(s)\in[0,1]$.

Let now $s\in S$ and $q_n,p_n\in{\mathbb Q}$, such that
$q_n\uparrow r(s)$, $p_n\downarrow r(s)$. Set
$I_n:=[q_n,p_n]$: then 
\[
1=\gamma(s,I_n)=\psi_{I_n}(X_s),
\]
which, by the definition of $\psi_{I_n}$,
means $u(s,\theta)\notin\partial O$ for all $\theta\in[0,1]
\backslash I_n$; moreover
\[
0=\gamma(s,[0,1]\backslash\{r(s)\})
=\psi_{[0,1]\backslash\{r(s)\}}(X_s), 
\]
so that
$u(s,r(s))\in\partial O$.
Therefore, $r(s)$ is the unique
$\theta\in[0,1]$ such that $u(s,\theta)\in\partial O$.
Finally, since the support of $\eta$ is contained in $\{(t,\theta):
u(t,\theta)\in\partial O\}$ and a.s.
\[
\left(S\times[0,1]\right) \cap \{(t,\theta):
u(t,\theta)\in\partial O\} \, = \, 
\{(s,r(s)):s\in S\}:=\cS,
\]
then 
$\eta \left(
({\mathbb R}^+\times[0,1])\backslash\cS
 \right) \, = \, 0$. This concludes the proof of Theorem
 \ref{main2}.

\bigskip\noindent
{\bf Acknowledgments.} This is part of the PhD thesis of the author, who would like to thank his advisor Lorenzo Zambotti for introducing this subject, for his useful advice and for his encouragement.


\begin{thebibliography}{}

\bibitem{af} R. A. Adams, J.J.F. Fournier, {\it
Sobolev Spaces}, Secon Edition,
Academic Press, Elsivier.

\bibitem{ags} L. Ambrosio, N. Gigli, G. Savar\'e (2005),
\emph{Gradient flows in metric spaces and in the spaces of probability measures}.
Lectures in Mathematics ETH Z\"urich, Birkh\"auser Verlag, Basel.

\bibitem{asz} L. Ambrosio, G. Savar\'e, L. Zambotti (2009),
{\it Existence and Stability for Fokker-Planck equations
with log-concave reference measure}, Probability Theory and Related Fields,
 145 {\bf 3}, Page 517

\bibitem{bdt} V. Barbu, G. Da Prato, L. Tubaro (2009), 
{\it Kolmogorov equation associated to the stochastic reflection problem on a smooth convex set of a Hilbert space}, The Annals of Probability, {\bf 37}, Number 4, 1427-1458. 

\bibitem{bre} H. Brezis (1973), {\it
Op\'erateurs maximaux monotones et semi-groupes de contraction dans les espaces de Hilbert}, North Holland, Mathematics Studies, New York.

\bibitem{cepa}  E. C\'epa (1998), {\em Probl\`eme de Skorohod
multivoque}, Annals of Probability, {\bf 26} no. 2, 500-532.

\bibitem{sc} S. Cerrai (2001), {\it
Second Order PDE's in Finite and Infinite Dimension},
Lecture Notes in Mathematics 1762, Springer Verlag.

\bibitem{dmz} Robert C. Dalang, Carl Mueller, L. Zambotti (2006), {\it
Hitting properties of parabolic s.p.d.e.'s with reflection},
Annals of Probability, {\bf 34} No. 4.

\bibitem{dpz2} G. Da Prato, J. Zabczyk (1996), {\it Ergodicity
for Infinite Dimensional Systems}, London Mathematical
Society Lecture Notes, n.229, Cambridge University Press.

\bibitem{debgoud}  A. Debussche, L. Gouden\`ege (2010),   {\it Stochastic Cahn-Hilliard equation with double singular nonlinearity and reflection},
available at http://arxiv.org/abs/0908.4295.


\bibitem{deza} A. Debussche, L. Zambotti (2007),
{\it Conservative Cahn-Hilliard equation with reflection},
Ann. Prob., {\bf 35 (5)}, pp. 1706-1739.

\bibitem{dopa1} C. Donati-Martin, E. Pardoux (1993),
{\it White-noise driven SPDEs with reflection},
Prob. Theory and Rel. Fields, {\bf 95}, pp. 1-24.

\bibitem{eddouk} M. Eddahbi, Y. Ouknine (2001), {\it Multivalued SPDEs driven by additive space-time white noise and additive white noise},  Random Oper. Stochastic Equations  9,  no. 2, 103--120.

\bibitem{ek} S. N. Ethier, T. G. Kurtz (2005),
{\it Markov Processes: Characterization And Convergence },  2nd Revised edition, Wiley Series in Probability and Statistics.

\bibitem{fot} M. Fukushima, Y. Oshima, M. Takeda (1994),
{\it Dirichlet Forms and Symmetric Markov Processes}, Walter
de Gruyter, Berlin-New York.

\bibitem{fu99} M. Fukushima, (1999), {\it On semi-martingale
characterizations of functionals of Symmetric Markov
Processes}, Electr. Journ. of Prob., {\bf 4}, 1-32.

\bibitem{fuol} T. Funaki, S. Olla (2001), {\it Fluctuations
for $\nabla\phi$ interface model on a wall},
Stoch. Proc. and Appl, {\bf 94}, no. 1, 1--27.

\bibitem{goud} L. Gouden\`ege (2009), {\it Stochastic Cahn-Hilliard equation with singular nonlinearity and reflection},
Stochastic Processes and their Applications, Vol. 119 {\bf 10},  3516--3548.

\bibitem{hari} Y. Hariya (2006), {\it Integration by parts formulae for
Wiener measures restricted to subsets in $\R^d$}, Journal of Funct. Analysis,
239, 594-610.

\bibitem{koles} A.V. Kolesnikov (2006), {\it Mosco convergence of Dirichlet forms in infinite dimensions with changing reference measures},  J. Funct. Anal.  230 {\bf 2}, 382--418.

\bibitem{kurtz} T. Kurtz  (2007), {\it
The Yamada-Watanabe-Engelbert theorem for general stochastic equations and inequalities},
Elect. Journ. of Prob., {\bf 12}, no. 33, 951--965.

\bibitem{kushi} K. Kuwae, T. Shioya (2003), {\it Convergence of spectral structures: a functional analytic theory and its applications to spectral geometry}, Comm. Anal. Geom. 11 (4) 599--673.

\bibitem{maro} Z. M. Ma, M. R\"ockner  (1992), {\it
Introduction to the Theory of (Non Symmetric) Dirichlet Forms},
Springer-Verlag, Berlin/Heidelberg/New York.

\bibitem{nupa} D. Nualart, E. Pardoux (1992), {\it
White noise driven quasilinear SPDEs with reflection},
Prob. Theory and Rel. Fields, {\bf 93}, pp. 77-89.

\bibitem{otobe} Y. Otobe  (2006), {\it Stochastic partial differential equations with two reflecting walls},  J. Math. Sci. Univ. Tokyo  13,  no. 2, 129--144.

\bibitem{reyo}  D. Revuz, M. Yor (1991),
{\it Continuous Martingales and Brownian Motion}, Springer Verlag.

\bibitem{tanaka} H. Tanaka (1979), {\em Stochastic
differential equations with reflecting boundary condition in convex
regions}, Hiroshima Math. J. {\bf 9}, 163-177.

\bibitem{za} L. Zambotti (2002), {\it Integration by parts formulae
on convex sets of paths and applications to SPDEs with reflection}, 
Prob. Theory and Rel. Fields, {\bf 123}, pp. 579-600.

\bibitem{za2} L. Zambotti (2003), {\it
Integration by parts on $\delta$-Bessel Bridges, $\delta > 3$,
and related SPDEs}, Annals of Probability, {\bf 31} no. 1, 323-348.

\end{thebibliography}
\end{document}